\tikzset{>=stealth',
        cvertex/.style={circle,draw=black,inner sep=1pt,outer sep=3pt},
        vertex/.style={circle,fill=black,inner sep=1pt,outer sep=3pt},
        star/.style={circle,fill=yellow,inner sep=0.75pt,outer sep=0.75pt},
        tvertex/.style={inner sep=1pt,font=\scriptsize},
        gap/.style={inner sep=0.5pt,fill=white}}
\author{Martin Kalck} 
\email{martin.maths@posteo.de}
\urladdr{}
\author{Joseph Karmazyn} 
\address{School of Mathematics and Statistics,
University of Sheffield,
Hicks Building,
Hounsfield Road,
Sheffield,
S3 7RH.}
\email{j.h.karmazyn@sheffield.ac.uk}
\urladdr{http://www.jhkarmazyn.staff.shef.ac.uk/}
\date{\today}
\title[Ringel duality]{Ringel duality \\ for certain strongly quasi-hereditary algebras}
\subjclass[2010]{ 16S50, 16G10}
\DeclareMathAlphabet{\mathbbm}{U}{bbm}{m}{n}
\newcommand{\Hom}{\textnormal{Hom}}
\newcommand{\Ext}{\textnormal{Ext}}
\newcommand{\Coh}{\textnormal{Coh}}
\newcommand{\QCoh}{\textnormal{QCoh}}
\newcommand{\op}{\textnormal{op}}
\newcommand{\End}{\textnormal{End}}
\newcommand{\proj}{\textnormal{proj}}
\def\mod{\mathop{\textnormal{mod}}\nolimits}
\let\oldtocsection=\tocsection
\let\oldtocsubsection=\tocsubsection
\let\oldtocsubsubsection=\tocsubsubsection
\renewcommand{\tocsection}[2]{\hspace{0em}\oldtocsection{#1}{#2}}
\renewcommand{\tocsubsection}[2]{\hspace{1em}\oldtocsubsection{#1}{#2}}
\renewcommand{\tocsubsubsection}[2]{\hspace{2em}\oldtocsubsubsection{#1}{#2}}
\theoremstyle{plain}
\newtheorem{thm}{Theorem}[section]
\newtheorem{cor}[thm]{Corollary}
\newtheorem{lem}[thm]{Lemma}
\newtheorem{prop}[thm]{Proposition}
\newtheorem*{Preprop}{Proposition}
\newtheorem*{Precor}{Corollary}
\theoremstyle{definition}
\newtheorem{defn}[thm]{Definition}
\newtheorem{ex}[thm]{Example}
\theoremstyle{remark}
\newtheorem{rem}[thm]{Remark}
\theoremstyle{setup}
\newcommand{\AUS}{\mathsf{AUS}}
\newcommand{\PI}{\mathsf{PI}}
\newcommand{\SUB}{\mathsf{SUB}}
\newcommand{\sub}{\mathsf{sub}}
\newcommand{\FAC}{\mathsf{FAC}}
\newcommand{\fac}{\mathsf{fac}}
\newcommand{\ADR}{\mathsf{ADR}}
\newcommand{\adr}{\mathsf{adr}}
\DeclareMathOperator{\rad}{\mathsf{rad}}
\DeclareMathOperator{\coker}{\mathsf{coker}}
\newcommand{\ind}{\operatorname{ind}}
\newcommand{\im}{\mathsf{im}}
\DeclareMathOperator{\add}{\mathsf{add}}
\newcommand{\ca}{{\mathcal A}}
\newcommand{\cs}{{\mathcal S}}
\begin{document}

\begin{abstract}

We study quasi-hereditary endomorphism algebras defined over a new class of finite dimensional monomial algebras with a special ideal structure. The main result is a uniform formula describing the Ringel duals of these quasi-hereditary algebras.

As special cases, we obtain a Ringel-duality formula for a family of strongly quasi-hereditary algebras arising from a type A configuration of projective lines in a rational, projective surface as recently introduced by Hille and Ploog, for certain Auslander-Dlab-Ringel algebras, and for Eiriksson and Sauter's nilpotent quiver algebras when the quiver has no sinks and no sources. We also recover Tan's result that the Auslander algebras of self-injective Nakayama algebras are Ringel self-dual.

 \end{abstract}
 
\maketitle

\tableofcontents

\maketitle

\section{Introduction}

Quasi-hereditary algebras form an important class of finite dimensional algebras with relations to Lie theory (this was the original motivation \cite{Scott}) and exceptional sequences in algebraic geometry (see e.g. \cite{HillePerling} and \cite{BuchweitzLeuschkeVdB}). Examples of quasi-hereditary algebras include blocks of category $\mathcal{O}$ and Schur algebras.

Ringel duality \cite{Ringel} is a fundamental phenomenon in the theory of quasi-hereditary algebras, see for example \cite{KrauseComp, Rouquieretal, BodzentaKuelshammer, CoulembierMazorchuk, Pucinskaite, IyamaReiten2Auslander, ErdmannParker, CondeErdmann,Coulembier} for (recent) work on this topic. For any quasi-hereditary algebra $A$ there exists a quasi-hereditary algebra $\mathfrak{R}(A)$,  the \emph{Ringel-dual} of $A$, such that
\[
A \textnormal{-mod} \cong \mathfrak{R}(\mathfrak{R}(A))\textnormal{-mod}.
\]However, computing the Ringel-dual of a quasi-hereditary algebra explicitly may not be straightforward. In this paper we introduce a new class of quasi-hereditary algebras that admit a uniform description of their Ringel duals, see Theorem \ref{T:PrelimMain}.

Let's make this more precise. Let $k$ be an algebraically closed field, and $R$ be a finite dimensional monomial $k$-algebra, i.e. $R=kQ/I$, where $I$ is a two-sided ideal generated by paths in $Q$. For example $R=k\langle x_1, \ldots, x_l \rangle/I$, where $I$ is a two-sided ideal generated by monomials in $k\langle x_1, \ldots, x_l \rangle$. 

\begin{defn}
We call $R$ \emph{ideally ordered}, if for every primitive idempotent $e \in R$ and every pair of monomials $m, n \in eR$ there exists an epimorphism $Rm \to Rn$ or an epimorphism $Rn \to Rm$.
\end{defn}

For an algebra $R$ we consider the additive subcategory of all torsionless $R$-modules
\[
\sub(R):=\add \{U \mid U \subseteq R^{\oplus n} \} \subseteq R-\mod,
\]
define $\SUB(R):=\bigoplus_{U \in \ind(\sub(R))} U$ to be the direct sum of all indecomposable modules in $\sub(R)$ up to isomorphism, and set 
\[
E_R:=\End_{R}(\SUB(R)).
\]
For submodules $\Lambda \subset R$ we define the layer function $l(\Lambda):=\dim_k R - \dim_k \Lambda$ and we call $l$ the \emph{ideal layer function}. For an ideally ordered algebra $R$ the isomorphism classes of submodules $\Lambda \subset R$ label the simple modules $S(\Lambda)$ of $E_R$ and so the ideal layer function induces a partial ordering on the simple $E_R$-modules: $S(\Lambda_1) \le S(\Lambda_2) \Leftrightarrow l(\Lambda_1)  \le l(\Lambda_2)$. We call this the \emph{ideal layer ordering}.

The following is the main result of this paper and calculates the Ringel dual for algebras of the form $E_R$. See Theorem \ref{T:Main} for a more detailed version. 
 
\begin{thm}\label{T:PrelimMain} Let $R$ be a finite dimensional ideally ordered monomial algebra. Then $E_R$ is quasi-hereditary with respect to the ideal layer ordering, has global dimension $\le 2$, and has Ringel dual $E_{R^{\op}}^{\op}$:
\begin{align*} \label{E:PrelimRingelDualityFormula}
\mathfrak{R}(E_R) \cong E_{R^{\op}}^{\op}.
\end{align*}
\end{thm}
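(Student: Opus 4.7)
The plan is to establish the three assertions in the order they are stated: quasi-heredity with respect to the ideal layer ordering, the global dimension bound, and the explicit identification of the Ringel dual. Throughout, I would exploit the fact that for an ideally ordered monomial algebra, the combinatorics of $\sub(R)$ is controlled by the order on monomials $m \in eR$ induced by the existence of epimorphisms $Rm \twoheadrightarrow Rn$, so that $\sub(R)$ is essentially determined by ``intervals'' of such monomials.

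First, to establish quasi-heredity, I would label the indecomposable summands of $\SUB(R)$ and the corresponding simples $S(\Lambda)$ of $E_R$ by submodules $\Lambda \subset R$. For each $\Lambda$, I would define the standard module $\Delta(\Lambda)$ as the quotient of the projective $P(\Lambda) = \Hom_R(\SUB(R), U_\Lambda)$ by the trace of $\{P(\Lambda') : l(\Lambda') > l(\Lambda)\}$. Using the total ordering of monomials in $eR$ that the ideally ordered hypothesis provides, one can write down the composition factors of $\Delta(\Lambda)$ explicitly and check that they all satisfy $l \ge l(\Lambda)$, and that each projective $P(\Lambda)$ admits a $\Delta$-filtration with sections of strictly increasing $l$-value. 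This is the defining property of a quasi-hereditary structure.

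Second, for the global dimension bound, the natural strategy is to show that $E_R$ is in fact \emph{strongly} quasi-hereditary, i.e.\ that each $\Delta(\Lambda)$ has projective dimension $\le 1$. Because $\sub(R)$ is closed under submodules in $R\text{-mod}$, any short exact sequence of the form $0 \to K \to U_\Lambda \to V \to 0$ with $V$ torsionless has $K \in \sub(R)$, and the functor $\Hom_R(\SUB(R), -)$ turns this into a two-term projective resolution of the corresponding quotient; identifying this quotient with $\Delta(\Lambda)$ (via the combinatorics from the previous step) yields $\textnormal{pd}_{E_R} \Delta(\Lambda) \le 1$. The bound $\gldim E_R \le 2$ is then an immediate consequence.

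Finally, to compute the Ringel dual, I would identify the characteristic tilting module $T$. The expected picture is that $k$-linear duality $D = \Hom_k(-,k)$ sends $\SUB(R)$ to a module whose indecomposable summands, viewed as left $R^{\op}$-modules, are precisely the indecomposable objects of $\sub(R^{\op})$; under this correspondence, taking $\Hom_R(\SUB(R), D(-))$ should convert $\SUB(R^{\op})$ into the characteristic tilting module $T$ of $E_R$. Once this is established, the standard tilting identity yields
\[
\mathfrak{R}(E_R) = \End_{E_R}(T)^{\op} \cong \End_{R^{\op}}(\SUB(R^{\op}))^{\op} = E_{R^{\op}}^{\op}.
\]
The main obstacle is the identification of $T$ with $\Hom_R(\SUB(R), D(\SUB(R^{\op})))$: one needs to check that $R^{\op}$ is again ideally ordered (so that $E_{R^{\op}}$ makes sense as an object of the same theory), that $D$ matches the indecomposables of $\sub(R)$ and $\sub(R^{\op})$ in an order-reversing way, and, crucially, that the resulting $E_R$-module has the correct $\Delta$- and $\nabla$-filtrations to be characteristic tilting. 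This last verification is where the monomial structure and the ideal layer ordering must be used most carefully, as it is precisely the combinatorial symmetry between $R$ and $R^{\op}$ on the level of monomials that should ensure the tilting filtrations on the two sides correspond.
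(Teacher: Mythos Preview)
Your outline captures the right broad shape, but there are two substantive issues, one a genuine gap and one a divergence from the paper's method that leaves the hardest step unaddressed.

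\textbf{The global dimension step is incorrect as stated.} From $\mathrm{pd}_{E_R}\Delta(\Lambda)\le 1$ for all $\Lambda$ (left strong quasi-heredity) one only obtains $\gldim E_R \le n$, where $n$ is the number of layers; it does \emph{not} give $\gldim\le 2$. The paper gets the bound by showing that $E_R$ is simultaneously \emph{left and right} strongly quasi-hereditary for the ideal layer function: good left approximations (surjective $\alpha_\Gamma\colon\Gamma\to\Gamma_{>\gamma}$, using the ideally ordered hypothesis) give $\mathrm{pd}\,\Delta\le 1$, while good right approximations (injective $\rho_\Gamma\colon\rad\Gamma\hookrightarrow\Gamma$, using only that $R$ is monomial) give $\mathrm{id}\,\nabla\le 1$; together these force $\gldim\le 2$ by a result of Ringel. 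You could also deduce $\gldim\le 2$ directly from the fact that $\sub(R)$ is closed under kernels, via the standard Auslander argument, but that is a separate argument from the one you wrote; it is not a consequence of $\mathrm{pd}\,\Delta\le 1$.

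\textbf{The Ringel-dual step takes a different route, and the obstacle you name is exactly what the paper avoids.} Your plan is to build $T$ explicitly as $\Hom_R(\SUB(R),D(\SUB(R^{\op})))$ and then verify it carries the required $\Delta$- and $\nabla$-filtrations. The paper never does this verification. Instead it proceeds indirectly: writing $A=E_R$ and $e_0$ the idempotent for the summand $R$, it first shows $A$ is left \emph{ultra} strongly quasi-hereditary (so $Ae_0\in\mathcal{F}(\nabla)$), whence $\sub(Ae_0)\cap\fac(Ae_0)\subseteq\add T$. It then proves that the exact functor $\Hom_A(Ae_0,-)$ restricts to an equivalence $\sub(Ae_0)\cap\fac(Ae_0)\xrightarrow{\sim}\sub(e_0Ae_0)=\sub(R^{\op})$. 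This yields an embedding $\sub(R^{\op})\hookrightarrow\add T$, and a counting argument (the number of indecomposables in $\sub(R)$ and $\sub(R^{\op})$ agree, and equals the number of indecomposable summands of $T$) upgrades the embedding to an equivalence. The formula $\mathfrak{R}(E_R)\cong\End_{R^{\op}}(\SUB(R^{\op}))^{\op}$ then drops out without ever checking tilting filtrations by hand. Your $D$-duality approach may be workable, but as you acknowledge, the filtration check is the crux, and you have not indicated how to carry it out; the paper's idempotent-plus-counting trick is precisely designed to bypass it.

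A small slip: composition factors of $\Delta(\Lambda)$ have layer $\le l(\Lambda)$, not $\ge$.
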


\begin{rem}
As we were preparing to post this paper on the ArXiv we became aware of the very recent paper \cite{Coulembier} of Coulembier that had just appeared. This paper introduces a more general version of the Auslander-Dlab-Ringel construction and proves a Ringel duality formula in this setting. In particular, this generalises the Ringel duality formula of Conde and Erdmann \cite{CondeErdmann} that we discuss below.

Our construction appears to be a special case that fits into this more general framework which, in particular, implies the Ringel duality formula of Theorem \ref{T:PrelimMain}. However, the approach and proof in Coulembier's work is different to the one in this paper. The work of Coulembier also seems to answer the questions we raise in Remark \ref{rem:Main Theorem} (1) and at the end of Section \ref{Section:ADR} regarding the possibility of finding a more general framework in which a Ringel duality formula holds. 

  In light of this, the results of this paper can be thought of as providing a very explicit example of Coulembier's Ringel duality formula, linking to several geometrically inspired examples such as Kn\"{o}rrer invariant algebras, and proving further properties that hold in our special case of the algebras $E_R$ such as being simultaneous left and right strongly quasi-hereditary for the same quasi-hereditary order and being left ultra strongly quasi-hereditary.

\end{rem}

The class of ideally ordered monomial algebras includes many well known examples, and in many of these examples the endomorphism algebras $E_{R}$ are also well understood.

\begin{ex} \label{ex:ideallyordered} The following families of finite dimensional monomial algebras are ideally ordered.
\begin{enumerate}[start=0]
\item Hereditary algebras.
\item The algebras $R=k\langle x_1, \ldots, x_l \rangle/(x_1, \ldots, x_l)^m$ for positive integers $l, m$. 

\item More generally, for $Q$ a finite quiver, $J \subseteq kQ$ the two-sided ideal generated by all arrows in $Q$, and $m \ge 0$ the algebra $R:= kQ/J^m$ is ideally ordered. 

To prove this, consider a monomial $p \in eR$. There is a surjection $Re \rightarrow Rp$ given by $g \mapsto gp$ with kernel
\[
\{ g \in Re \, | \, gp =0 \} \cong \{ g \in kQ \, | \, gp \in J^m \} \cong J^{m-i}e
\]
where $i$ is minimal such that $p \in J^i$. Hence for any monomial $p \in eR$ there is an isomorphism $Rp \cong Re/J^le$ for some $l \in \{ 1, \ldots, m \}$. As a result, for any pair of monomials $p,q \in eR$ the monomial ideals $Rp, Rq$ are isomorphic to some pair of quotient modules occurring in the chain of surjections 
\[
Re \cong Re/J^{m}e \to Re/J^{m-1}e \to \ldots \to Re/J^{1}e.
\]
Hence there is a surjection $Rp \rightarrow Rq$ or $Rq \rightarrow Rp$.

\item

For every pair $0<a<r$ of coprime integers the finite dimensional monomial \emph{Kn{\"o}rrer invariant algebra} $K_{r,a}$ is defined in \cite[Definition 4.6]{KalckKarmazyn}, and the results of \cite[Section 6.4]{KalckKarmazyn} describe its monomial ideals and imply that it is ideally ordered. The definition of these algebras is recapped in Section \ref{Section:HillePloog}.

\item Nakayama algebras, introduced in \cite{Nakayama}, are ideally ordered. 
\end{enumerate}
We give two constructions that can be used to produce ideally ordered monomial algebras.
\begin{enumerate}[resume]
\item Let $R$ and $K$ be ideally ordered monomial algebras and let $_R M_K$ be an $R$-$K$-bimodule which is projective as  $R$-module and as $K$-module. Then 
\[T:=\begin{pmatrix} R & _R M_K \\ 0 & K  \end{pmatrix} \] is an ideally ordered monomial algebra. Example \ref{E:Goodleftapproximation} (a) shows that $T$ need not be ideally ordered if we weaken the assumptions on $_R M_K$.

\item If $R$ is ideally ordered and $e \in R$ is an arbitrary idempotent, then $eRe$ is ideally ordered. 

Suppose that $f \in eRe$ is a primitive idempotent and $p,q \in feRe=fRe$ are monomials. Then $f$ is a primitive idempotent in $R$, $p,q \in fR$ are monomials, and as $R$ is ideally ordered there is a surjection between $Rp$ and $Rq$.  Applying
\[
eR \otimes_R (-) \cong \Hom_R(Re,-):R\text{-}\mod \rightarrow eRe\text{-}\mod
\]
to this surjection of $R$-modules will produce the required surjection of $eRe$-modules between $eRp$ and $eRq$ since $eR \otimes_R (-)$ is exact. This shows $eRe$ is ideally ordered.

\end{enumerate}
We finish by exhibiting a local commutative monomial algebra which is not ideally ordered.
\begin{enumerate}[resume]
\item The algebra $R=k[x, y]/(x^3, xy, y^3)$ is not ideally ordered. To see this consider the ideals $Rx$ and $Ry$.
\end{enumerate}

\end{ex}
We briefly discuss how these examples of ideally ordered monomial algebras $R$, and the algebras $E_R:=\End_R(\SUB(R))$ they define, relate to algebras and results in the literature.

\subsection*{Hille and Ploog's algebras}
The Ringel duality formula of Theorem \ref{T:PrelimMain}, the definition of ideally ordered monomial algebras, and the construction of the algebras $E_R$ in this paper are all geometrically inspired. They were first observed in our previous work \cite{KalckKarmazyn} for a class of quasi-hereditary algebras $\Lambda_\alpha$ constructed by Hille \& Ploog  \cite{HillePloog}. 

In more detail, the algebras $\Lambda_{\bm{\alpha}}$ arise  from an exceptional collection of line bundles associated to a type $A$ configuration of intersecting rational curves $C_i$ in a rational, projective surface as illustrated in the picture below.

\[
\begin{tikzpicture}
\draw (-2.6,-0.3) to [bend left=45] node[pos=0.48, above] {$C_1$} (-1.2,-0.3);
\draw (-1.8,-0.3) to [bend left=45] node[pos=0.48, above] {$C_2$} (-0.4,-0.3);
\draw (-1,-0.3) to [bend left=25] node[pos=0.48, right] {} (-0.2,0);
\node (D) at (0,-0.2) {$\dots$};

\draw (2.6,-0.3) to [bend right=45] node[pos=0.48, above] {$C_n$} (1.2,-0.3);
\draw (1.8,-0.3) to [bend right=45] node[pos=0.48, above] {$C_{n-1}$} (0.4,-0.3);
\draw (1,-0.3) to [bend right=25] node[pos=0.48, right] {} (0.2,0);

\draw[black] (0,0) ellipse (4 and 1);
\end{tikzpicture}
\]
The construction of $\Lambda_{\bm{\alpha}}$ (recapped in Section \ref{Section:HillePloog}) depends on the order of the curves $C_i$.
Reversing the order of these curves, Hille \& Ploog's construction yields an algebra $\Lambda_{\bm{\alpha}^{\vee}}$.

It is natural to ask how the algebras $\Lambda_{\bm{\alpha}}$ and $\Lambda_{\bm{\alpha}^{\vee}}$ are related from a representation theoretic perspective. Our answer below is phrased in terms of Ringel duality.

\begin{Preprop}[See Proposition \ref{Prop:RingelforHillePloog}] There is an isomorphism of algebras
\begin{align}\label{E:Rdualityformula}
\mathfrak{R}(\Lambda_{\bm{\alpha}}) \cong  \Lambda_{\bm{\alpha}^{\vee}}^{\op}.
\end{align}
\end{Preprop}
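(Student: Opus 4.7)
The plan is to deduce the proposition as a direct instance of the main Ringel duality formula in Theorem \ref{T:PrelimMain}, once the Hille--Ploog algebras are identified with algebras of the form $E_R$ for a suitable ideally ordered monomial algebra.

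First I would invoke the identification of $\Lambda_{\bm{\alpha}}$ as an endomorphism ring $E_{R_{\bm{\alpha}}}$, where $R_{\bm{\alpha}}$ is a Kn\"orrer invariant algebra $K_{r,a}$ (or a direct sum / quotient of such, depending on the normal form of $\bm{\alpha}$) produced from the discrepancy data of the type $A$ curve configuration. This identification was established in \cite{KalckKarmazyn}, and by Example \ref{ex:ideallyordered}(3) the algebra $R_{\bm{\alpha}}$ is ideally ordered. So Theorem \ref{T:PrelimMain} applies and gives
\[
\mathfrak{R}(\Lambda_{\bm{\alpha}}) \;\cong\; \mathfrak{R}(E_{R_{\bm{\alpha}}}) \;\cong\; E_{R_{\bm{\alpha}}^{\op}}^{\op}.
\]

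The heart of the proof is then to identify $R_{\bm{\alpha}}^{\op}$ with $R_{\bm{\alpha}^{\vee}}$, since this immediately yields
\[
E_{R_{\bm{\alpha}}^{\op}}^{\op} \;\cong\; E_{R_{\bm{\alpha}^{\vee}}}^{\op} \;\cong\; \Lambda_{\bm{\alpha}^{\vee}}^{\op}
\]
after applying the identification of the first step again (with $\bm{\alpha}^{\vee}$ in place of $\bm{\alpha}$). Concretely, I would revisit the quiver-with-relations presentation of $K_{r,a}$ recalled in Section \ref{Section:HillePloog} and inspect how the combinatorics of the continued fraction / Hirzebruch--Jung data defining $\bm{\alpha}$ transform under $\bm{\alpha} \mapsto \bm{\alpha}^{\vee}$. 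Reversing the order of the curves $C_1,\dots,C_n$ reverses the role of the two ``directions'' of monomials in the Kn\"orrer invariant algebra, which should be precisely what passing to the opposite algebra does on the level of $R_{\bm{\alpha}}$.

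The main obstacle I foresee is this bookkeeping step: checking that the order-reversal on the geometric side $\bm{\alpha}\leftrightarrow \bm{\alpha}^{\vee}$ corresponds exactly, and not merely up to Morita equivalence, to passing from $R_{\bm{\alpha}}$ to $R_{\bm{\alpha}}^{\op}$. The identification $\Lambda_{\bm{\alpha}}\cong E_{R_{\bm{\alpha}}}$ and the behaviour of the ideal structure of $K_{r,a}$ under reversal of its generators (which is also reflected in the symmetry $K_{r,a}\leftrightarrow K_{r,r-a}$ type phenomena in \cite[Section 6.4]{KalckKarmazyn}) should do this. Once this combinatorial identification is in place, no further representation theoretic work is needed: the Ringel duality formula of Theorem \ref{T:PrelimMain} is a black box, and the isomorphism \eqref{E:Rdualityformula} drops out.
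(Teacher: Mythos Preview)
Your approach is correct and is in fact explicitly acknowledged in the paper (see Remark \ref{R:HPisKnorrer} and the discussion following Proposition \ref{P:HPisKnorrer}): one identifies $\Lambda_{\bm{\alpha}} \cong E_{K_{r,a}}$, applies Theorem \ref{T:PrelimMain}/\ref{T:Main}, and then uses that $K_{r,a}^{\op}$ is again a Kn\"orrer invariant algebra corresponding to $\bm{\alpha}^{\vee}$. One small correction: the opposite algebra is $K_{r,a}^{\op} \cong K_{r,a'}$ where $aa' \equiv 1 \pmod{r}$, not $K_{r,r-a}$; the relevant fact is that if $r/a = [\alpha_1,\ldots,\alpha_n]$ then $r/a' = [\alpha_n,\ldots,\alpha_1]$.

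However, the paper's own proof of Proposition \ref{Prop:RingelforHillePloog} takes a genuinely different, geometric route that does \emph{not} rely on Theorem \ref{T:Main}. It works directly on the surface $X$: the exceptional sequence $\mathbb{E}$ of line bundles yields $\Lambda$ by iterated universal extensions and an object $T$ by iterated universal \emph{co}extensions; Dlab--Ringel's description of the characteristic tilting module then gives $\mathfrak{R}(\End_X(\Lambda)) \cong \End_X(T)^{\op}$. A geometric duality $\ddagger = R\mathcal{H}om_X(-\otimes \mathcal{O}(-C_1-\cdots-C_n),\mathcal{O})$ sends $\mathbb{E}$ to the reversed sequence, interchanges extensions and coextensions, and identifies $\End_X(T)$ with $\Lambda_{\bm{\alpha}^{\vee}}$. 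Your approach treats the Ringel duality formula as a black box and reduces the proposition to Kn\"orrer-algebra combinatorics; the paper's approach is independent of the main theorem and was in fact the original motivation for it. Both are valid, but the geometric proof is logically prior in the paper's narrative.
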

In order to see that \eqref{E:Rdualityformula} is a special case of our main Theorem \ref{T:PrelimMain}, we recall that there are isomorphisms of algebras
\begin{align*}
\Lambda_{\bm{\alpha}} \cong E_{K_{r,a}} \text{ and }\Lambda_{\bm{\alpha}^\vee} \cong E_{K_{r,a}^\op} 
\end{align*}
described in \cite[Section 6]{KalckKarmazyn}. This is recalled in Proposition \ref{P:HPisKnorrer} and the discussion immediately beneath it. Here, $K_{r, a}$ denotes a Kn{\"o}rrer invariant algebra, which is the ideally ordered monomial in Example \ref{ex:ideallyordered} (3), and $0 < a < r$ are a pair of coprime integers depending on $\bm{\alpha}$.

We remark that in this setting the Ringel duality formula \eqref{E:Rdualityformula} also has an alternative proof, which is more geometric, see Proposition \ref{Prop:RingelforHillePloog}.

The aim of this paper was to find a more general representation theoretic framework extending the Ringel-duality formula \eqref{E:Rdualityformula} to a larger class of (ultra) strongly quasi-hereditary algebras. In particular, the Kn{\"o}rrer invariant algebras are the original motivation for the ideally ordered condition.

\begin{rem}
The algebras $\Lambda_{\bm{\alpha}} \cong E_{K_{r, a}}$ and $K_{r, a}$ were used to show a noncommutative version of Kn\"{o}rrer periodicity for cyclic quotient surface singularities in \cite{KalckKarmazyn}. More precisely, it was proved there that the singularity category of a cyclic quotient surface singularity is equivalent to the singularity category of a corresponding Kn\"{o}rrer invariant algebra, generalising classical Kn\"{o}rrer's periodicity for the polynomials $x^n$ and $x^n + y^2 +z^2$. The proof uses noncommutative resolutions and $\Lambda_{\bm{\alpha}}\cong E_{K_{r,a}}$ plays the role of a noncommutative resolution for $K_{r,a}$.
\end{rem}

\subsection*{Auslander-Dlab-Ringel and nilpotent quiver algebras}

From a more representation theoretic viewpoint, a Ringel duality formula that looks similar to that of Theorem \ref{T:PrelimMain} was proved for Auslander-Dlab-Ringel algebras $E_R^{\ADR}$ by Conde and Erdmann \cite[Theorem A]{CondeErdmann}. We define these algebras, recall Conde and Erdmann's Ringel duality formula, and discuss the relationship between this result and the results of this paper in Section \ref{Section:ADR}.

In particular, for the class of algebras $R:= kQ/J^m$ in Example \ref{ex:ideallyordered} $(2)$ above the corresponding algebras $E_R$ and $E_R^{\ADR}$ coincide if $Q$ has no sources. 

\begin{Preprop} [See Proposition \ref{P:ADR=IOM}]
If $R:= kQ/J^m$ for $Q$ a finite quiver without sources and $J$ the two-sided ideal generated by all arrows in $Q$, then there is an isomorphism of quasi-hereditary algebras
\[
E_R^{\ADR} \cong E_R.
\]
\end{Preprop}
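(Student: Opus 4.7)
The plan is to identify the defining modules of the two algebras up to additive closure and then to match their quasi-hereditary structures. First, since $\rad(R) = J/J^m$, we have $R/\rad^i(R) \cong kQ/J^i$, and as a left $R$-module this decomposes into indecomposables as $\bigoplus_{e} Re/J^i e$, where $e$ ranges over primitive idempotents. Hence the basic defining module of $E_R^{\ADR}$ has indecomposable direct summands (up to isomorphism) precisely $\{Re/J^i e : e \text{ primitive}, 1 \le i \le m\}$, with $i = m$ recovering $Re$ itself.

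By the computation in Example \ref{ex:ideallyordered}(2), every submodule of $R$ of the form $Rp$ for a monomial $p$ of length $j$ with $ep = p$ is isomorphic to $Re/J^{m-j}e$, and conversely each $Re/J^l e$ arises as such a submodule iff there exists a path of length $m-l$ ending at the vertex $e$. The hypothesis that $Q$ has no sources means that every vertex has at least one incoming arrow, from which a straightforward induction produces a path of length $j$ ending at $e$ for every vertex $e$ and every $j \in \{0, 1, \ldots, m-1\}$. Thus every $Re/J^l e$ with $1 \le l \le m$ is realised as a submodule of $R$, so $\SUB(R)$ and the basic ADR generator share the same set of indecomposable summands up to isomorphism, giving an isomorphism $E_R \cong E_R^{\ADR}$ of $k$-algebras.

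What remains is to promote this to an isomorphism of quasi-hereditary algebras. The ideal layer ordering on simples is induced by $l(\Lambda) = \dim_k R - \dim_k \Lambda$, while the usual ADR ordering is by Loewy length, so $Re/J^i e$ sits at level $i$. I expect the main obstacle to lie here: showing that these two a priori different partial orders produce the same standard modules and hence the same quasi-hereditary structure. I would approach this by noting that both orderings are monotone with respect to the existence of nonzero surjections between indecomposable summands, since any such surjection $Re/J^l e \twoheadrightarrow Rf/J^k f$ forces both $\dim(Re/J^l e) \ge \dim(Rf/J^k f)$ and $l \ge k$. Because the standard modules on both sides are determined by such chains of surjections among the indecomposable summands, they agree under the identification of summands above, yielding the desired isomorphism of quasi-hereditary algebras.
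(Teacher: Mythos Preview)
Your approach is essentially the same as the paper's, but there is a genuine gap in the first part. You show that every monomial principal ideal $Rp$ is isomorphic to some $Re/J^{l}e$, and conversely (using the no-sources hypothesis) that every $Re/J^{l}e$ arises as a monomial principal ideal. From this you conclude that ``$\SUB(R)$ and the basic ADR generator share the same set of indecomposable summands''. But this only establishes $\adr(R) = \{\text{monomial principal ideals}\}$; it does not show that every indecomposable summand of $\SUB(R)$ is a monomial principal ideal. By definition $\sub(R)$ contains \emph{all} indecomposable submodules of $R^{\oplus n}$, and a priori these could be non-principal, or principal but not isomorphic to monomial ideals. The paper fills this gap by invoking Theorem~\ref{T:Main}(a), which asserts $\sub(R)\cong\mathsf{pi}(R)$ and that every principal ideal is isomorphic to a monomial one; this is a nontrivial consequence of the ideally ordered hypothesis (proved via Lemma~\ref{L:PIareMonom} and the counting argument in the proof of Theorem~\ref{T:Main}). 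You need to cite this, or give a direct argument for $R=kQ/J^m$ that every indecomposable in $\sub(R)$ is of the form $Re/J^{l}e$.

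Your argument for matching the quasi-hereditary structures is in the right spirit but somewhat loose. The paper makes it precise by observing that (via the anti-equivalence of \eqref{E:AntiEquivalence}) the projective summands $P_{j,k}$ appearing in the kernel of $P_{i,l}\to\Delta(Re_i/\rad^l Re_i)$ correspond exactly to proper quotients of $Re_i/\rad^l Re_i$, and such a proper quotient $Re_j/\rad^k Re_j$ exists iff $i=j$ and $k<l$, which is simultaneously equivalent to the strict dimension inequality. Thus the defining short exact sequences of Definition~\ref{D:RingelStrongQH} coincide for the two orderings. Your monotonicity observation is correct, but the conclusion that ``standard modules on both sides are determined by such chains of surjections'' should be made explicit along these lines.
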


We also prove that when $Q$ has no sinks the ADR algebra coincides with the quiver nilpotent algebra $N_m(Q)$ introduced by Eiriksson and Sauter \cite{EirikssonSauter}, which is motivated via a quiver graded version of Richardson orbits and is recapped in Section \ref{Section:NQA}.

\begin{Preprop} [See Proposition \ref{P:ADR and NSQ}]
If $R:= kQ/J^m$ for $Q$ a finite quiver without sinks and $J$ the two-sided ideal generated by all arrows in $Q$, then there is an isomorphism of quasi-hereditary algebras 
\[
N_{m}(Q) \cong E_{kQ/J^m}^{\ADR}.
\]
\end{Preprop}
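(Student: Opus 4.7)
The plan is to exhibit both $N_m(Q)$ and $E_{kQ/J^m}^{\ADR}$ as endomorphism rings over $R:=kQ/J^m$ of the same module, and then identify this common module under the \emph{no sinks} hypothesis. First, I would recall the ADR construction: $E_R^{\ADR} = \End_R\!\left(\bigoplus_{i=1}^{m} R/\rad^i R\right)$, and for $R = kQ/J^m$ the quotients $R/\rad^i R$ coincide with $kQ/J^i$. Decomposing into indecomposables, the ADR generator is the direct sum, over vertices $v$ of $Q$ and $1 \le i \le \ell_v$, of the radical quotients $P_v/\rad^i P_v$, where $P_v:=Re_v$ is the indecomposable projective at $v$ and $\ell_v$ is its Loewy length.

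Next, I would recall from Section \ref{Section:NQA} Eiriksson and Sauter's definition of $N_m(Q)$ as the endomorphism algebra over $kQ/J^m$ of an explicit generator built from the quiver-graded Richardson-orbit construction, with indecomposable summands parametrised by pairs $(v,i)$, for $v$ a vertex of $Q$ and $1 \le i \le m$. The key identification is that the Eiriksson-Sauter summand at $(v,i)$ is precisely the radical quotient $P_v/\rad^i P_v$, so that both algebras are endomorphism rings of modules whose summands lie in the common family $\{P_v/\rad^i P_v\}$, differing only in which pairs $(v,i)$ give rise to a distinct summand.

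The main obstacle, and the precise role of the \emph{no sinks} hypothesis, is this counting issue, which dualises the \emph{no sources} condition used in Proposition \ref{P:ADR=IOM}. If $Q$ has no sinks, then from every vertex $v$ one can build arbitrarily long paths by iteratively choosing outgoing arrows, so after the truncation by $J^m$ each projective $P_v$ has Loewy length exactly $m$; the modules $P_v/\rad^i P_v$ are therefore nonzero and mutually non-isomorphic for all $1 \le i \le m$, and the Eiriksson-Sauter indexing runs over exactly the same list of summands as the ADR indexing. Without this hypothesis, at a sink $w$ the projective $P_w$ is simple, so the $m$ Eiriksson-Sauter layers at $w$ all collapse while the ADR construction records a single summand, and the two algebras need not agree. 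Once the underlying $R$-modules match, the isomorphism of endomorphism algebras is formal, and the quasi-hereditary structure ordered by Loewy level transports along this isomorphism, yielding the claimed isomorphism of quasi-hereditary algebras.
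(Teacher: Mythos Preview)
Your proposal has a genuine gap: it assumes precisely the statement that needs to be proved. In this paper, $N_m(Q)$ is \emph{not} introduced as an endomorphism algebra over $R=kQ/J^m$; Section~\ref{Section:NQA} defines it purely as the path algebra $kQ^{(m)}/J$ of the staircase quiver modulo the commutation and zero relations. There is no ``Eiriksson--Sauter summand at $(v,i)$'' to identify with $P_v/\rad^i P_v$: the only data attached to $(v,i)$ is a vertex of $Q^{(m)}$. The content of the proposition is exactly to show that this abstractly presented algebra is isomorphic to the endomorphism algebra $E_R^{\ADR}=\End_R\bigl(\bigoplus_{v,i} P_v/\rad^i P_v\bigr)$, so invoking such an identification as a starting point is circular.

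The paper's proof does the actual work you are skipping. It builds an explicit algebra homomorphism $\Phi\colon N_m(Q)\to E_R^{\ADR}$ by sending the idempotent $e_{i_l}$ to the identity on $Re_i/\rad^l Re_i$, the arrow $b(i_l)$ to the canonical surjection $Re_i/\rad^{l+1} Re_i \twoheadrightarrow Re_i/\rad^l Re_i$, and each $a_l$ to the inclusion $Re_{h(a)}/\rad^{l-1} Re_{h(a)} \hookrightarrow Re_{t(a)}/\rad^{l} Re_{t(a)}$ induced by the arrow $a$; one must check the staircase relations hold among these maps. Surjectivity of $\Phi$ is then argued by lifting an arbitrary morphism in $E_R^{\ADR}$ to a map of projectives over $R$ and rewriting it in terms of these generators, and injectivity follows from a dimension count comparing paths in $Q^{(m)}$ with paths in $Q$ of constrained length. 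The no-sinks hypothesis is used exactly where you locate it (to ensure every projective has Loewy length $m$, so the simple counts agree), but that observation alone is only the first paragraph of the argument, not the whole proof.
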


In particular, if $R=kQ/J^m$ (as in Example \ref{ex:ideallyordered} (2) above) for a quiver with no sinks or sources, then $E_R \cong E_R^{\ADR} \cong N_m(Q)$ and so Theorem \ref{T:PrelimMain} provides a Ringel duality formula for such nilpotent quiver algebras; see Corollary \ref{C:Qnosinksnosources}.

\subsection*{Nakayama and Auslander algebras.}

Several of the examples of ideally ordered monomial algebras above can be thought of as geometrically inspired by resolutions of singularities. Indeed, Examples \ref{ex:ideallyordered} (1) -- (4) above can be thought of as different generalisations of the algebra $k[x]/x^n$.

Work of Dlab \& Ringel \cite{DR2} shows that every finite dimensional algebra admits a noncommutative `resolution' by a quasi-hereditary algebra, and a generalisation of this result led to Iyama's proof of the finiteness of Auslander's representation dimension \cite{Iyama}. 

Such a resolution for finite dimensional algebras of finite representation type is provided by the Auslander algebra. This also occurs in more geometric contexts; the categorical resolutions considered by Kuznetsov and Lunts \cite{KuznetsovLunts} use a construction motivated by Auslander algebras to resolve non-reduced schemes.

For $R$ a finite dimensional algebra of finite representation type let $E_R^{\AUS}$ denote the Auslander algebra of $R$, which we recall in Section \ref{Section:AuslanderNakayama}.
\begin{Preprop}[See Proposition \ref{p:AUSisIOM}] If $R$ is an ideally ordered monomial algebra, then
\[
E_R^{\AUS} \cong E_R
\]
if and only if $R$ is self-injective.
\end{Preprop}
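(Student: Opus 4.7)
The plan is to translate the isomorphism $E_R^{\AUS} \cong E_R$ into an equality of additive subcategories of finitely generated $R$-modules, and then to apply the classical characterization of self-injectivity in terms of torsionless modules. Throughout, $R$ must have finite representation type for $E_R^{\AUS}$ to be defined; it is worth noting that neither the ideally ordered nor the monomial hypothesis plays any essential role in this particular statement.

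By construction both algebras are basic endomorphism rings: $E_R = \End_R(\SUB(R))$ with $\SUB(R)$ a multiplicity-free direct sum over $\ind(\sub(R))$, and (replacing it by a Morita equivalent basic algebra if necessary) the Auslander algebra is $\End_R(M_R)$ with $M_R$ a multiplicity-free direct sum over $\ind(R\textnormal{-mod})$. Hence the number of simples of $E_R$, respectively $E_R^{\AUS}$, equals $|\ind(\sub(R))|$, respectively $|\ind(R\textnormal{-mod})|$. An isomorphism $E_R^{\AUS} \cong E_R$ forces these two numbers to agree; since the inclusion $\sub(R) \subseteq R\textnormal{-mod}$ already gives $\ind(\sub(R)) \subseteq \ind(R\textnormal{-mod})$, equality of cardinalities promotes the inclusion to the equality $\sub(R) = R\textnormal{-mod}$. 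Conversely, once $\sub(R) = R\textnormal{-mod}$, the modules $\SUB(R)$ and $M_R$ have the same additive closure and their basic endomorphism algebras agree, yielding $E_R \cong E_R^{\AUS}$.

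I would then finish by invoking the standard fact that a finite dimensional algebra $R$ is self-injective if and only if every finitely generated $R$-module is a submodule of a free module. For $(\Leftarrow)$, a self-injective finite dimensional algebra is an injective cogenerator (every simple appears in $\soc(\,_RR)$), so every finitely generated module embeds into some $R^n$. For $(\Rightarrow)$, if every module is torsionless, then in particular the injective envelope $I(S)$ of each simple $S$ embeds into some $R^n$; being injective it splits off as a direct summand, hence $I(S)$ is projective. Since every indecomposable injective is then projective, $R$ is self-injective. Combined with the previous paragraph this yields both implications of the proposition.

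The only real subtlety is the simples-counting step in the middle paragraph; everything else is general folklore about Auslander algebras and injective cogenerators. Once one commits to working with basic endomorphism rings from the outset, nothing about the finer structure of ideally ordered monomial algebras enters the argument.
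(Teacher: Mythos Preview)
Your proof is correct and follows essentially the same approach as the paper: reduce the isomorphism $E_R^{\AUS} \cong E_R$ to the equality $\sub(R) = R\textnormal{-}\mod$, then invoke the classical fact that every finitely generated module is torsionless if and only if $R$ is self-injective. Your simples-counting argument makes explicit the passage from the algebra isomorphism to $\sub(R) = R\textnormal{-}\mod$ that the paper leaves unjustified, and your observation that neither the ideally ordered nor the monomial hypothesis is actually used is correct.
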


A particular example of a class of ideally ordered, monomial algebras of finite representation type are the Nakayama algebras (listed as Example (4) above).

\begin{Precor}[See Corollary \ref{c:AUSisNAK}] If $R$ is self-injective Nakayama algebra, then
\[
E_R^{\AUS} \cong E_R.
\]
\end{Precor}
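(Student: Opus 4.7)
The plan is to derive the corollary as an immediate consequence of the preceding Proposition \ref{p:AUSisIOM}, so the task reduces to checking that the two hypotheses of that proposition are met for a self-injective Nakayama algebra $R$.

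First I would recall (or cite a standard reference on Nakayama algebras) that every Nakayama algebra is a monomial algebra, and that it is listed in Example \ref{ex:ideallyordered} (4) as being ideally ordered. Moreover, the connected self-injective Nakayama algebras are precisely the algebras of the form $kQ/J^m$ with $Q$ a cyclic quiver and $J$ the arrow ideal, which means that in the connected case one may also view the hypothesis of ideal orderedness as a special case of Example \ref{ex:ideallyordered} (2). In general, a self-injective Nakayama algebra is a finite product of such connected pieces, and both the Auslander algebra construction $E_{(-)}^{\AUS}$ and the construction $E_{(-)}$ distribute over finite products of algebras, so it is enough to treat the connected case; alternatively one just observes directly that the ideal orderedness in Example \ref{ex:ideallyordered} (4) is stated without a connectedness assumption.

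Second, self-injectivity is an assumption of the corollary itself, so the second hypothesis of Proposition \ref{p:AUSisIOM} is free. Applying that proposition now yields the isomorphism $E_R^{\AUS} \cong E_R$.

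No step here is expected to be difficult; the only mild care needed is to make sure one is really in the setup of Proposition \ref{p:AUSisIOM}, i.e. that $R$ is simultaneously an ideally ordered monomial algebra \emph{and} self-injective, which is exactly the conjunction of Example \ref{ex:ideallyordered} (4) with the corollary's hypothesis. In fact, this corollary is essentially a recasting of Tan's theorem on Ringel self-duality of Auslander algebras of self-injective Nakayama algebras mentioned in the abstract, since once $E_R^{\AUS} \cong E_R$ is established, the Ringel self-duality statement follows by combining with Theorem \ref{T:PrelimMain}.
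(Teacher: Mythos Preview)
Your proposal is correct and follows essentially the same route as the paper: the paper notes that a self-injective Nakayama algebra has the form $kC_n/J^m$ for an oriented cycle $C_n$, hence is an ideally ordered monomial algebra, and then applies Proposition \ref{p:AUSisIOM}. Your discussion of the product decomposition and the link to Ringel self-duality is extra detail the paper handles in the surrounding text and in Corollary \ref{c:RingelSelfDual}.
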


In this setting Theorem \ref{T:PrelimMain} also generalises several known results in the literature, e.g. that the Auslander-algebras of selfinjective Nakayama algebras are Ringel-selfdual, see \cite{Tan}.

\begin{Precor}[See Corollary \ref{c:RingelSelfDual}]
If $R$ is a self-injective Nakayama algebra then 
\[
\mathfrak{R}(E_R) \cong E_R.
\]
\end{Precor}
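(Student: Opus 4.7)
The plan is to deduce this as a short corollary of Theorem \ref{T:PrelimMain}, so the only real work is identifying $E_{R^{\op}}^{\op}$ with $E_R$ in the self-injective case. First I would observe that Nakayama algebras are ideally ordered monomial algebras by Example \ref{ex:ideallyordered}(4), so Theorem \ref{T:PrelimMain} applies and yields
\[
\mathfrak{R}(E_R) \cong E_{R^{\op}}^{\op}.
\]
The statement therefore reduces to producing a $k$-algebra isomorphism $E_R \cong E_{R^{\op}}^{\op}$.

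For this I would use the $k$-linear duality $D := \Hom_k(-,k)$, which is a contravariant equivalence between $R\text{-}\mod$ and $R^{\op}\text{-}\mod$. The key input from self-injectivity is that projective and injective $R$-modules coincide, so every finitely generated $R$-module embeds into some $R^{\oplus n}$; hence $\sub(R)$ already consists of all finitely generated $R$-modules, and the same holds for $R^{\op}$ (which is again a self-injective Nakayama algebra). Consequently $D$ restricts to a contravariant equivalence between $\sub(R)$ and $\sub(R^{\op})$, sending the indecomposable summands of $\SUB(R)$ bijectively, up to isomorphism, onto those of $\SUB(R^{\op})$.

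Because $D$ reverses composition, applying it to morphism spaces gives a $k$-algebra isomorphism
\[
E_R^{\op} = \End_R(\SUB(R))^{\op} \xrightarrow{\;\cong\;} \End_{R^{\op}}(D\SUB(R)) \cong \End_{R^{\op}}(\SUB(R^{\op})) = E_{R^{\op}}.
\]
Taking opposites yields $E_R \cong E_{R^{\op}}^{\op}$, and combining this with the Ringel duality formula displayed above completes the argument.

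The essentially only nontrivial input beyond Theorem \ref{T:PrelimMain} is the standard equality $\sub(R) = R\text{-}\mod$ for self-injective finite dimensional algebras, so the main obstacle, namely proving the Ringel duality formula, has already been absorbed into Theorem \ref{T:PrelimMain}; the remaining duality step is short and entirely formal.
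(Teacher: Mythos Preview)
Your argument is correct. Both your proof and the paper's deduce the result from Theorem~\ref{T:PrelimMain} and then identify $E_{R^{\op}}^{\op}$ with $E_R$, but the identification is done differently. The paper uses the explicit classification: a self-injective Nakayama algebra is of the form $R=kC_n/J^m$ for an oriented cycle $C_n$, whence $R^{\op}\cong R$; then, invoking Corollary~\ref{C:Qnosinksnosources} to write $E_R\cong N_m(C_n)$, it appeals to the concrete isomorphism $N_m(C_n)\cong N_m(C_n^{\op})^{\op}$. Your route is more intrinsic: you use self-injectivity to get $\sub(R)=R\text{-}\mod$ (which is essentially the content of Proposition~\ref{p:AUSisIOM}) and then apply the $k$-duality $D$ to obtain $E_R^{\op}\cong E_{R^{\op}}$ directly, bypassing both the explicit presentation of $R$ and the nilpotent quiver algebra machinery. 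Your approach is shorter and more conceptual; the paper's approach has the side benefit of making the quiver of $E_R$ visible via $N_m(C_n)$, which is what connects the result to Tan's original computation.
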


For other related results see work by Baur, Erdmann \& Parker \cite{ErdmannBaurParker}, Crawley-Boevey \& Sauter \cite{CrawleyBoeveySauter}, and Nguyen, Reiten, Todorov \& Zhu \cite{VanReitenTodorovShi}).

\subsection*{Left and right strongly quasi-hereditary structure.}

A further special property of the quasi-hereditary algebras $E_R=\End_R(\SUB(R))$ is that the ideal layer function simultaneously realises both a left and right strongly quasi-hereditary structure on the algebras. 

Since $\add \SUB(R)$ is closed under kernels $E_R=\End_R(\SUB(R))$ has global dimension $2$, and it was recently shown by Tsukamoto \cite{Tsukamoto} that this implies $E_R$ admits both a left strongly quasi-hereditary structure and a right strongly quasi-hereditary structure (for a possibly different order),  building on earlier work of Dlab \& Ringel and Iyama. 

In general the left and right strongly quasi-hereditary structures cannot be realised using the same order. Indeed, Tsukamoto shows that for Auslander-algebras of representation-finite algebras (which all have global dimension $2$) this is possible precisely if the underlying algebra is a Nakayama algebra. 

As seen in the examples above, the class of quasi-hereditary algebras $E_R$ constructed from ideally ordered monomial algebras provides a larger class of such algebras.

\subsection*{Conventions}
Throughout this paper $k$ will denote an algebraically closed field. For paths $p,q \in kQ$ in the path algebra of a quiver $Q$ the composition $pq$ will denote the path $q$ followed by the path $p$. For $R$ a Noetherian ring $R$-$\mod$ will denote the category of finitely generated left $R$-modules, and for $S \subset R$-$\mod$ we will define $\add S$ to be the additive subcategory generated by $S$: i.e. the smallest full subcategory of $R$-$\mod$ containing $S$ and closed under isomorphism, direct sums, and direct summands. In particular, the category of finitely generated projective $R$ modules $\proj$-$R$ is equivalent to $\add R$.

We recall the category of torsionless $R$-modules $\sub(R)$ from the introduction, and now give a more general definition: for an $R$-module $M$ we define the following subcategory 
\[
\sub(M):= \add \{U \mid U \subseteq M^{\oplus n} \} \subseteq R-\mod
\]
 with corresponding module $\SUB(M):=\bigoplus_{U \in \ind(\sub(M))} U$.

Moreover, for an $R$-module $M$, we set
\[
\fac(M):=\add \{Q \mid M^{\oplus n} \to Q \to 0  \} \subseteq R-\mod
\]
and let $\FAC(M):=\bigoplus_{Q \in \ind(\fac(M))} Q$ denote the direct sum of all indecomposable objects in $\fac(M)$ up to isomorphism.

 We let $\dagger$ denote the standard $k$-duality $\Hom_k(-, k)$. For the injective cogenerator $I:=R_R^{\dagger}$ we define the category of \emph{divisible} $R$-modules
\[
\fac(I):=\add \{Q \mid I^{\oplus n} \to Q \to 0  \} \subseteq R-\mod
\]
and let $\FAC(I):=\bigoplus_{Q \in \ind(\fac(I))} Q$ denote the direct sum of all indecomposable objects in $\fac(I)$ up to isomorphism.

\subsection*{Acknowledgements} We thank Teresa Conde for interesting discussions about this work and about relations to her thesis. We are grateful to Karin Erdmann for pointing us to Ringel's paper which simplifies the proof of our main result and adds another perspective to this work. We also thank Agnieszka Bodzenta who, in particular, explained to us the proof of Proposition \ref{Prop:RingelforHillePloog} and Xiao-Wu Chen who shared with us Example \ref{ex:Not all Sub are PI}. We would also like to thank \"{O}gmunder Eiriksson, Julian K{\"u}lshammer, Daiva Pu{\v c}inskait{\. e}, {\v S}pela {\v S}penko, and Michael Wemyss for interesting and helpful discussions and David Ploog for pointing out misprints in an earlier version.

We would also particularly like to thank the anonymous referee, whose many useful comments have greatly improved the paper.

The first author was supported by EPSRC grant EP/L017962/1. The second author was supported by  EPSRC grant EP/M017516/2.

\section{Strongly quasi-hereditary algebras}
In this section, we will give necessary and sufficient conditions for certain endomorphism algebras over ideally ordered monomial algebras to be left or right strongly quasi-hereditary.

We first recall the definition of a quasi-hereditary algebra. This needs some preparation. For a finite dimensional $k$-algebra $A$ choose a labelling $i \in I$ of the simple $A$-modules $S_i$ up to isomorphism. A  partial order $\leq$ on the set $I$ is called \emph{adapted} if for each $M \in A$-$\mod$ with top $S_i$ and socle $S_j$ incomparable there exists some $k>i$ or $k>j$ such that $S_k$ is a composition factor of $M$.  In particular, total orderings are adapted. We denote the projective cover and injective envelope of the simple $S_i$ by $P_i$ and $Q_i$ respectively.
\begin{defn}
Given a partial ordering $ \le$ on the index set $I$, for $i \in I$ the \emph{standard} module  $\Delta_i$ is the maximal factor module of $P_i$ whose composition series consists only of simple modules $S_j$ such that $j \le i$. Similarly, the \emph{costandard} module $\nabla_i$ is the maximal submodule of $Q_i$ whose composition series consists only of simple modules $S_j$ such that $j \le i$.

The $k$-algebra $A$ is \emph{quasi-hereditary} with respect to an adapted partial ordering $\le$ if:
\begin{enumerate}
\item
$\End_A(\Delta_i) \cong k$ for each $i \in I$ and 
\item
${A}$ can be filtered by the standard modules under this ordering; i.e. there exists a series of $A$-modules $0=M_n \subset M_{n-1} \subset \dots \subset M_1 \subset M_0=A$ such that each quotient $M_{i-1}/M_i$ is isomorphic to a direct sum of standard modules.
\end{enumerate}
\end{defn}

The following terminology is due to Ringel \cite{Iyamasfiniteness}. We refer to the references and discussions in \cite{Iyamasfiniteness} for earlier work.

\begin{defn} \label{D:Stronglyqh}
A quasi-hereditary algebra $A$ is called \emph{left strongly quasi-hereditary} if all standard modules have projective dimension at most $1$. It is called \emph{right strongly quasi-hereditary} if all costandard $A$-modules have injective dimension at most $1$.
\end{defn}

This is an equivalent characterisation of left/right strongly quasi-hereditary condition given in \cite[Appendix A1]{Iyamasfiniteness}. The original definition, introduced in \cite[Section 4]{Iyamasfiniteness}, is in terms of a layer function. 
\begin{defn} \label{D:RingelStrongQH}
A $k$-algebra $A$ is \emph{left strongly quasi-hereditary} with $n$ layers if there is a layer function $L\colon \{ \text{simple $A$-modules} \}{/\cong} \, \rightarrow \{1, \dots, n \}$ such that for any simple module $s$ with projective cover $P(s)$ there is an exact sequence
\[
0 \rightarrow R(s) \rightarrow P(s) \rightarrow \Delta(s) \rightarrow 0
\]
such that
\begin{itemize}
\item[a)]
The module $R(s)$ is the direct sum of projective covers $P(s')$ of simple modules $s'$ such that $L(s')>L(s)$.
\item[b)]
All simple factors $s'$ of $\rad \, \Delta(s)$ satisfy $L(s')<L(s)$.
\end{itemize}
The layer function induces an ordering on the simple {$A$}-modules  {and} the modules $\Delta(s)$ are the standard modules for this strongly quasi-hereditary structure. Right strongly quasi-hereditary algebras are defined dually.
\end{defn}

After some preparation, we introduce the class of endomorphism algebras which we are interested in. For the rest of this section we let $R$ be a finite dimensional $k$-{algebra}. A submodule of the form $Rp \subset R$ is a \emph{principal left ideal} if $p \in eR$ with $e \in R$ a primitive idempotent. We introduce the additive subcategory
\[
\mathsf{pi}(R):= \add \{ Rp \mid p \in eR, \text{ $e$ primitive idempotent} \} \subset R -\mod,
\]
and we let $\PI(R):= \bigoplus_{Rp \in \textnormal{ind}( \mathsf{pi}(R))} Rp$
denote the direct sum of all principal left ideals up to isomorphism. In this section we assume that $\PI(R)$ is finitely generated and define $E_R^{\PI}:=\End_R(\PI(R))$. 

The assumption on $\PI(R)$ is satisfied for ideally ordered monomial algebras $R$ due to Lemma \ref{L:PIareMonom} but does not hold for all finite dimensional algebras; e.g. if $R=\mathbb{C}[x,y]/(x^2,y^2)$, then the ideals $I_{\lambda}:=R(x+\lambda y)$ for $\lambda \in \mathbb{C}$ give a $\mathbb{C}$-indexed set of ideals that are pairwise non-isomorphic as left modules.

Throughout the rest of the paper we will label the simple and projective $E_R^{\PI}$-modules by the principal ideals of $R$, as we now explain. To do this we use the additive anti-equivalence
\begin{align} \label{E:AntiEquivalence}
\add \PI(R) \xrightarrow{\Hom_R(-, \PI(R))} E_R^{\PI} \text{-} \proj.
\end{align}
It is clear that $\Hom_R(-,\PI(R))$ is a contravariant functor, and one can show that it is an additive anti-equivalence using that it maps the additive generator $\PI(R)$ of $\add \PI(R)$ to the additive generator $E_R^\PI$ of $E_R^{\PI} \text{-} \proj$. Under this anti-equivalence the indecomposable summands $\Lambda$ of $\PI(R)$ are in 1-to-1 correspondence with indecomposable projective $E_R^{\PI}$-modules, which we denote by $P(\Lambda)$. The indecomposable projective modules $P(\Lambda)$ are in 1-to-1 correspondence with simple $E_R^{\PI}$-modules $S(\Lambda)$ that occur as their heads (i.e, so that $P(\Lambda) \rightarrow S(\Lambda)$ is a projective cover). Hence the principal ideals $\Lambda \subset R$ index the simple modules $S(\Lambda)$ of $E_R^\PI$. When given a partial ordering on the principal ideals, we use similar notation to label standard $\Delta(\Lambda)$ and costandard $\nabla(\Lambda)$ objects.

This labelling allows to define the following layer function for the algebra $E_R^{\PI}$.

\begin{defn} \label{D:ideallayerfunc}
Let $R$ be a finite dimensional algebra. For principal left $R$-ideals $\Lambda$, we define $l(S(\Lambda)):=l(\Lambda):= \dim_k R - \dim_k \Lambda$ and we call $l$ the \emph{ideal layer function}. It induces a partial ordering on the principal left $R$-ideals, which we call the \emph{ideal ordering}.
\end{defn}

We will now determine when the ideal layer function induces a left or right strongly quasi-hereditary structure on $E_R^{\PI}$ by considering left and right minimal approximations with respect to the ideal ordering. 

The notion of minimal approximation is common in representation theory; see \cite{KrauseSaorin} for a survey. A morphism $\alpha: \Gamma \rightarrow \Lambda$ is a left approximation for a class of modules $\mathcal{C}$ if $\Lambda \in \mathcal{C}$ and the induced morphism $\Hom_R(\Lambda,C) \rightarrow \Hom_R(\Gamma,C)$ is surjective for all $C \in \mathcal{C}$. A morphism $\Gamma \xrightarrow{\alpha} \Lambda$ is left minimal if any endomorphism $\phi$ of $\Lambda$ satisfying $\phi \circ \alpha = \alpha$ is an isomorphism. In particular, left minimal approximations are unique up to isomorphism.

Denote by $\mathsf{pi}(R)_{>i} \subseteq \mathsf{pi}(R)$ the full subcategory of direct sums of principal left $R$-ideals $\Lambda$ with $l(\Lambda)>i$.

\begin{lem}
Let $\Gamma$ be a principal left ideal of layer $\gamma$. There is a minimal left $\mathsf{pi}(R)_{>\gamma}$  approximation $\alpha_\Gamma \colon \Gamma \to \Gamma_{>\gamma}$ of $\Gamma$.
\end{lem}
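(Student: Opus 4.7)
The plan is to build an explicit left $\mathsf{pi}(R)_{>\gamma}$-approximation of $\Gamma$ in a universal manner and then extract a minimal summand by a Krull-Schmidt argument. First I would use the standing assumption that $\PI(R)$ is finitely generated to fix representatives $\Lambda_1, \dots, \Lambda_m$ of the (finitely many) isomorphism classes of indecomposable principal left ideals with $l(\Lambda_i) > \gamma$. Each $\Hom_R(\Gamma, \Lambda_i)$ is then a finite-dimensional $k$-vector space because $R$, and hence $\Gamma$ and $\Lambda_i$, are finite-dimensional. Choose a $k$-basis $\{\phi_{i,1}, \dots, \phi_{i,d_i}\}$ of $\Hom_R(\Gamma, \Lambda_i)$ and assemble these maps into
\[
\alpha \colon \Gamma \longrightarrow \bigoplus_{i=1}^{m} \Lambda_i^{\oplus d_i}, \qquad \alpha := (\phi_{i,j})_{i,j}.
\]
Every morphism $\Gamma \to \Lambda_i$ is by construction a $k$-linear combination of the $\phi_{i,j}$, and since every $N \in \mathsf{pi}(R)_{>\gamma}$ is a finite direct sum of copies of the $\Lambda_i$, every morphism $\Gamma \to N$ factors through $\alpha$. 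Thus $\alpha$ is a left $\mathsf{pi}(R)_{>\gamma}$-approximation of $\Gamma$.

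To pass to a minimal approximation I would invoke the Krull-Schmidt property of $R\text{-}\mod$, which $\mathsf{pi}(R)_{>\gamma}$ inherits as a full additive subcategory closed under direct summands. Among all left $\mathsf{pi}(R)_{>\gamma}$-approximations of $\Gamma$ whose codomain is a summand of the $\alpha$ above (there are only finitely many such up to isomorphism), pick one $\alpha_\Gamma \colon \Gamma \to \Gamma_{>\gamma}$ for which $\dim_k \Gamma_{>\gamma}$ is minimal. By the standard Krull-Schmidt argument in the theory of approximations (see e.g.\ Krause-Saorin \cite{KrauseSaorin}), such an $\alpha_\Gamma$ is automatically left minimal: if an endomorphism $\phi \in \End_R(\Gamma_{>\gamma})$ satisfied $\phi \circ \alpha_\Gamma = \alpha_\Gamma$ but were not an isomorphism, its Fitting decomposition would split $\Gamma_{>\gamma}$ as $M' \oplus M''$ with $M'' \neq 0$ and $\alpha_\Gamma$ having vanishing component into $M''$; deleting $M''$ would then yield a strictly smaller left $\mathsf{pi}(R)_{>\gamma}$-approximation, contradicting minimality of $\dim_k \Gamma_{>\gamma}$.

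The only substantive input is the finiteness of the indexing set of isomorphism classes of indecomposable principal ideals, which is exactly the hypothesis the paper has just imposed on $\PI(R)$; everything else is a routine assembly of a universal map followed by a Fitting-style reduction. Note that uniqueness of $\alpha_\Gamma$ up to isomorphism is then automatic from left minimality, so the notation $\Gamma_{>\gamma}$ is unambiguous.
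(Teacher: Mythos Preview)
Your proof is correct and follows essentially the same route as the paper: both construct the universal left $\mathsf{pi}(R)_{>\gamma}$-approximation $\Gamma \to \bigoplus_i \Lambda_i^{\oplus d_i}$ using the finiteness of indecomposable principal ideals and the finite-dimensionality of Hom-spaces, then extract a minimal approximation from it. The only difference is that the paper invokes \cite[Theorem I.2.4]{ARS} as a black box for the extraction step, whereas you spell out the Fitting/dimension-minimization argument explicitly (your parenthetical about ``finitely many such up to isomorphism'' is slightly imprecise, but irrelevant since you are minimizing an integer-valued function bounded below).
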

\begin{proof}
It is well-known that $\Gamma$ admits a left $\mathsf{pi}(R)_{>\gamma}$ approximation $\Phi \colon \Gamma \to \Lambda$. Indeed, this follows
since there are only finitely many indecomposable objects in $\mathsf{pi}(R)_{>\gamma} \subseteq R-\mod$ and since $R$ is finite dimensional, see e.g. \cite{AuslanderSmalo}. For the convenience of the reader, we recall the argument. We consider the module \[
\Lambda:= \bigoplus_{M \in \mathsf{pi}(R)_{> \gamma}} M^{\oplus  \dim \Hom_R(\Gamma,M)}
\]
where the sum is taken over all indecomposable objects $M$ in $\mathsf{pi}(R)_{> \gamma}$ (up to isomorphism). Then $\Lambda \in \mathsf{pi}(R)_{> \gamma}$ as each $\Hom_R(\Gamma,M)$ is finite dimensional, $\PI(R)$ is assumed to be finitely generated, and $\mathsf{pi}(R)_{> \gamma}$ is is closed under finite direct sums.

Choosing a basis $(\phi_i)_{i \in I}$ of 
\[
\bigoplus_{{M \in \mathsf{pi}(R)_{> \gamma}}} \Hom_R(\Gamma,M)
\]
determines a morphism $ \Phi:\Gamma \rightarrow \Lambda $
as the direct sum $\Phi = \oplus_{i \in I} \phi_i$. One can check that $\Phi$ is a left $\mathsf{pi}(R)_{>\gamma}$ approximation.

The existence of a left approximation with a finite length target implies the existence of a minimal left approximation by, for example, \cite[Theorem I.2.4]{ARS}, which shows such a minimal approximation can be constructed from an approximation by projection onto a summand. Hence the existence of the approximation $\Phi:\Gamma \rightarrow \Lambda$ ensures that a minimal left $\mathsf{pi}(R)_{>\gamma}$ approximation $\alpha_{\Gamma}:\Gamma \rightarrow \Gamma_{>\gamma}$ exists.
\end{proof}

\begin{defn} \label{goodleftapproximation}
We say that $\PI(R)$ has \emph{good left approximations} if \[\Hom_R(\coker \alpha_\Gamma, \PI(R))=0 \] for all principal left $R$-ideals $\Gamma$.
\end{defn}

\begin{lem}\label{L:Goodleftapprox}
If $R$ is an ideally ordered monomial algebra, then for a principal ideal $\Gamma$ of layer $\gamma$ the minimal left $\mathsf{pi}(R)_{>\gamma}$ approximation is surjective. Hence when $R$ is ideally ordered $\PI(R)$ has \emph{good left approximations}.
\end{lem}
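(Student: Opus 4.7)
The approach is to exhibit the minimal left $\mathsf{pi}(R)_{>\gamma}$-approximation explicitly as the canonical surjection onto the "next" principal ideal in the chain at the vertex of $\Gamma$, and then use the general fact that a surjective left approximation is automatically left minimal. Write $\Gamma = Rm$ with $m \in eR$ a monomial; the ideally-ordered hypothesis together with Lemma \ref{L:PIareMonom} imply that the indecomposable principal left ideals at the vertex $e$ form, up to isomorphism, a totally ordered chain of quotients of $Re$,
\[
Re = Q_0 \twoheadrightarrow Q_1 \twoheadrightarrow \dots \twoheadrightarrow Q_N,
\]
with $\Gamma \cong Q_i$ for some $i$. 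I would set $\Gamma' := Q_{i+1}$ (taking $\Gamma' := 0$ if $i = N$) and let $\pi \colon \Gamma \twoheadrightarrow \Gamma'$ be the canonical surjection in this chain.

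The main step is to verify that $\pi$ is a left $\mathsf{pi}(R)_{>\gamma}$-approximation. Reducing to an indecomposable $\Lambda = Rp$ with $\dim Rp < \dim \Gamma$ and an arbitrary $\phi \colon \Gamma \to \Lambda$, the image $R\phi(m) \subseteq \Lambda \subseteq R$ is a cyclic left ideal, and since $\phi(m) \in e\Lambda \subseteq eR$ it is itself a principal ideal at the vertex $e$; Lemma \ref{L:PIareMonom} therefore identifies $R\phi(m)$ with some $Q_j$ in the chain above. The dimension estimate $\dim R\phi(m) \le \dim \Lambda < \dim \Gamma$ forces $j \ge i+1$, and concatenating surjections along the chain produces the factorisation
\[
\phi \colon \Gamma \xrightarrow{\pi} \Gamma' = Q_{i+1} \twoheadrightarrow Q_j \cong R\phi(m) \hookrightarrow \Lambda
\]
as required. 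Left minimality of $\pi$ is automatic, since any $\psi \in \End_R(\Gamma')$ with $\psi \circ \pi = \pi$ must act as the identity on $\im(\pi) = \Gamma'$; uniqueness of the minimal left approximation therefore identifies $\alpha_\Gamma$ with $\pi$, which is surjective. In particular $\coker \alpha_\Gamma = 0$, immediately yielding the "good left approximations" property.

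The principal obstacle is the step identifying the cyclic image $R\phi(m)$ with an object $Q_j$ in the chain at $e$; this relies on Lemma \ref{L:PIareMonom} to rule out principal ideals whose dimensions lie strictly between two consecutive $\dim Q_i$ and $\dim Q_{i+1}$. Once this identification is in place, the remainder of the argument is essentially formal.
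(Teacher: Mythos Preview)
Your overall strategy matches the paper's: pick the immediate successor $\Gamma' = Q_{i+1}$ in the quotient chain at $e$, show the canonical surjection $\pi$ is a left $\mathsf{pi}(R)_{>\gamma}$-approximation, and deduce minimality from surjectivity. The chain description and the minimality argument are fine.

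The gap is in the factorisation step, and it is not the obstacle you flag. Identifying $R\phi(m)$ with some $Q_j$ via Lemma~\ref{L:PIareMonom} is indeed straightforward; what is \emph{not} justified is the claim that the displayed composition
\[
\Gamma \xrightarrow{\pi} Q_{i+1} \twoheadrightarrow Q_j \cong R\phi(m) \hookrightarrow \Lambda
\]
actually equals $\phi$. Concatenating the standard chain surjections gives \emph{one particular} surjection $\Gamma \twoheadrightarrow Q_j$; the corestriction $\bar\phi\colon \Gamma \twoheadrightarrow R\phi(m)$ is another surjection onto an isomorphic target. For your factorisation to recover $\phi$ you must be able to choose the isomorphism $Q_j \cong R\phi(m)$ so that the triangle commutes, i.e.\ you need $\ker(\Gamma\to Q_j) \subseteq \ker\bar\phi$. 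Since $\Gamma$ is typically not uniserial (already for $R=k\langle x,y\rangle/(x,y)^2$ the module $\Gamma=R$ has non-comparable submodules), this containment of kernels is not automatic and needs an argument.

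This is precisely what the paper extracts as Lemma~\ref{L:GenToGen}: for $R$ ideally ordered, \emph{every} surjection $Rm\to Rn$ factors through the standard one $m\mapsto n$. The paper's proof of Lemma~\ref{L:Goodleftapprox} invokes~\ref{L:GenToGen} exactly at this point (after first using Lemma~\ref{L:StandardEpi} to set up the standard surjections). Your argument becomes correct once you insert an appeal to Lemma~\ref{L:GenToGen} to pass from ``$R\phi(m)\cong Q_j$ with $j\ge i+1$'' to ``$\bar\phi$ factors through $\pi$''; without it, the displayed equality is an assertion rather than a proof.
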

\begin{proof}
Since $R$ is ideally ordered, we can use Lemma \ref{L:PIareMonom} to replace any principal $R$-ideal by an isomorphic monomial ideal wherever needed. 
In particular, without loss of generality let $\Gamma=Rg$ (with $g \in eR$ a monomial) be a principal left $R$-ideal of layer $\gamma$.

A surjection from $\Gamma$ to a principal ideal exists, $\Gamma \rightarrow 0$ as 0 is a principal ideal. Using that $R$ is finite dimensional there is a surjection to a principal ideal $\Gamma_{>\gamma}$ which has maximal dimension among all principal ideals that admit surjections from $\Gamma$. The existence of the surjection implies that $\Gamma$ and $\Gamma_{>\gamma}$ have the same head. In particular, we can assume that $\Gamma_{>\gamma}=Rn$ for a monomial $n \in eR$. Using Lemma \ref{L:StandardEpi}, the assignment $g \mapsto n$ defines an $R$-linear surjection $\alpha_\Gamma \colon \Gamma \to \Gamma_{>\gamma}$. 

We now claim that $\alpha_{\Gamma}$ is an approximation. To prove this we consider a principal ideal $\Lambda$ and will show that the induced map $\Hom_R(\Gamma_{>\gamma}, \Lambda) \rightarrow \Hom_R(\Gamma,\Lambda)$ is a surjection. Take a morphism $\beta \in \Hom_R(\Gamma,\Lambda)$. We aim to show that $\beta$ factors through $\alpha_{\Gamma}$ and hence is the image of some morphism in $\Hom_R(\Gamma_{>\gamma}, \Lambda)$.

To see this, take the induced surjection $\beta \colon \Gamma \to \im \beta$ and, as the image of a principal ideal in a principal ideal, $\im \beta \cong Rm$ (with a monomial $m \in eR$) is a principal left $R$-ideal. Using the ideally ordered condition on $R$ there is a surjection in at least one direction between $\im \beta$ and $\Gamma_{> \gamma}$. As $\Gamma_{>\gamma}$ is a principal ideal of maximal dimension with a surjection from $\Gamma$, it follows that $\dim \Gamma_{>\gamma} \ge \dim \im \beta $ and hence there is a surjection $\sigma \colon \Gamma_{>\gamma} \rightarrow \im \beta$. Using Lemma \ref{L:StandardEpi}, we can assume that $\sigma$ is given by $n \mapsto m$. Hence, the composition $\pi:=\sigma \circ \alpha_\Gamma$ is a surjection defined by $g \mapsto m$. Now Lemma \ref{L:GenToGen} shows that the surjection $\beta \colon \Gamma \to \im \beta$ factors over $\pi$. In particular, $\beta$ factors over $\alpha_\Gamma$. So $\alpha_\Gamma$ is an approximation.  

Finally, we claim that this approximation is minimal. To see this consider an endomorphism $\phi: \Gamma_{>\gamma} \rightarrow \Gamma_{>\gamma}$ such that $\phi \circ \alpha_{\Gamma} \cong \alpha_{\Gamma}$. Then as $\alpha_{\Gamma}$ is a surjection it follows that $\phi$ is a surjection, and hence an isomorphism.

By construction, $\coker \alpha_\Gamma = 0$ for all $\Gamma$ so  $\PI(R)$ has good left approximations.
\end{proof}

We give examples showing that our results above apply beyond the class of ideally ordered monomial algebras. 

\begin{ex} \label{E:Goodleftapproximation} \hfill
\begin{itemize} 
\item[(a)] Consider the monomial algebra $R=kQ/I$, where

\begin{align*}
\begin{aligned}
\begin{tikzpicture} [bend angle=15, looseness=1]
\node (c) at (-1,0) {$Q:=$};
\node (C0) at (0,0)  {$2$};
\node (C1) at (2,0)  {$1$};
\draw [->,bend left] (C1) to node[below]  {\scriptsize{$x$}} (C0);
\draw [->,bend right] (C1) to node[above]  {\scriptsize{$y$}} (C0);
\draw [->, looseness=24, in=52, out=128,loop] (C1) to node[above] {$\scriptstyle{a}$} (C1);
\draw [->, looseness=24, in=-38, out=38,loop] (C1) to node[right] {$\scriptstyle{b}$} (C1);
\draw [->, looseness=24, in=-128, out=-52,loop] (C1) to node[below] {$\scriptstyle{c}$} (C1);
\end{tikzpicture}
\end{aligned}
&\qquad \text{ and } \begin{aligned}
I=(a,b,c)^2+(yb,xc).
\end{aligned}
\end{align*}
This is not ideally ordered since there are no surjections between $Rb$ and $Rc$, however $\PI(R)$ still has good left approximations. It is a short exercise to find the $5$ isomorphism classes of indecomposable principal ideals and calculate their minimal left approximations. All but one of these minimal approximations are surjective, and the one which is not surjective has cokernel $S_1$, the simple at vertex 1. There are no morphisms from $S_1$ to any principal ideal, and hence $\PI(R)$ has good left approximations.

\item[(b)] Let $n>0$ be an integer. Consider the non-monomial algebras $R_n=kQ/I_n$  where 
\begin{align*}
\begin{aligned}
\begin{tikzpicture} 
\node (c) at (-1,0) {$Q:=$};
\node (C1) at (0,0)  {$1$};
\node (C2) at (1.5,-0.8)  {$2$};
\node (C3) at (1.5,0.8)  {$3$};
\node (C4) at (3,0)  {$4$};
\draw [->] (C1) to node[above]  {\scriptsize{$a$}} (C3);
\draw [->] (C1) to node[above]  {\scriptsize{$b$}} (C2);
\draw [->] (C2) to node[above]  {\scriptsize{$d$}} (C4);
\draw [->] (C3) to node[above]  {\scriptsize{$c$}} (C4);
\draw [->, looseness=24, in=-38, out=38,loop] (C4) to node[right] {$\scriptstyle{x}$} (C4);
\end{tikzpicture}
\end{aligned}
&\qquad \text{ and } \begin{aligned}
I_n=(x^n, ca-db,xc,xd).
\end{aligned}
\end{align*}
Again, $\mathsf{PI}(R_n)$ has good left approximations; it is a short exercise to find the $n+3$ principal ideals and calculate that the minimal left approximation for each one is surjective.
\end{itemize}
\end{ex}

\begin{prop}\label{P:LeftStrong} The algebra $E_R^{\PI}=\End_R(\PI(R))$ is left strongly quasi-hereditary with respect to the ideal layer function $l$ if and only if $\PI(R)$ has good left approximations with respect to $l$. 
\end{prop}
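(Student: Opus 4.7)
The plan is to use the contravariant anti-equivalence \eqref{E:AntiEquivalence} to translate the data of minimal left approximations in $\add \PI(R)$ into data about projective $E_R^{\PI}$-modules, and then to verify the conditions of Definition~\ref{D:RingelStrongQH}. For each principal ideal $\Lambda$ of layer $\gamma$, assuming the minimal left $\mathsf{pi}(R)_{>\gamma}$-approximation $\alpha_\Lambda\colon \Lambda \to \Lambda_{>\gamma}$ exists (as constructed earlier in this section), let
\[
f_\Lambda \colon P(\Lambda_{>\gamma}) \to P(\Lambda)
\]
denote its image under $\Hom_R(-, \PI(R))$. Since $P(\Lambda_{>\gamma})$ is a direct sum of projectives $P(\mu)$ with $l(\mu) > \gamma$, the approximation property of $\alpha_\Lambda$ translates (via the anti-equivalence) into the statement that every morphism $P(\mu) \to P(\Lambda)$ with $l(\mu) > \gamma$ factors through $f_\Lambda$; hence $\im f_\Lambda$ coincides with the trace $T$ in $P(\Lambda)$ of the family $\{P(\mu) : l(\mu) > \gamma\}$. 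Left minimality of $\alpha_\Lambda$ corresponds to right minimality of $f_\Lambda$, exhibiting $f_\Lambda$ as a projective cover of $T$.

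The next step is to identify the quotient $P(\Lambda)/T$ with the standard module $\Delta(\Lambda)$ for the ideal ordering. Since any morphism between non-isomorphic indecomposable projectives has image in the radical, $T \subseteq \rad P(\Lambda)$, so $P(\Lambda)/T$ has top $S(\Lambda)$. Moreover, if $S(\mu)$ with $l(\mu) > \gamma$ were a composition factor of $P(\Lambda)/T$, then $\Hom_{E_R^{\PI}}(P(\mu), P(\Lambda)/T) \ne 0$, and lifting yields a map $P(\mu) \to P(\Lambda)$ whose image is not contained in $T$, contradicting the definition of the trace. So $P(\Lambda)/T$ has composition factors only among $\{S(\Lambda') : l(\Lambda') \le \gamma\}$, and by maximality this quotient equals $\Delta(\Lambda)$.

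With these identifications in place, Definition~\ref{D:RingelStrongQH} (with layer function $l$) demands precisely that in the short exact sequence $0 \to T \to P(\Lambda) \to \Delta(\Lambda) \to 0$ the submodule $T$ is projective and isomorphic to $P(\Lambda_{>\gamma})$; equivalently, the projective cover $f_\Lambda\colon P(\Lambda_{>\gamma}) \twoheadrightarrow T$ must be an isomorphism, i.e.\ $f_\Lambda$ must be injective. Applying the left exact contravariant functor $\Hom_R(-, \PI(R))$ to the right-exact sequence $\Lambda \xrightarrow{\alpha_\Lambda} \Lambda_{>\gamma} \to \coker\alpha_\Lambda \to 0$ produces
\[
0 \to \Hom_R(\coker\alpha_\Lambda, \PI(R)) \to P(\Lambda_{>\gamma}) \xrightarrow{f_\Lambda} P(\Lambda),
\]
so $f_\Lambda$ is injective if and only if $\Hom_R(\coker\alpha_\Lambda, \PI(R)) = 0$. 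Quantifying over all $\Lambda$ gives both implications of the proposition, with conditions (a) and (b) of Definition~\ref{D:RingelStrongQH} immediate from the decomposition of $\Lambda_{>\gamma}$ and from the composition-factor analysis above.

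The main subtle point is the identification $P(\Lambda)/T = \Delta(\Lambda)$: the trace $T$ is defined via morphisms into $P(\Lambda)$, whereas $\Delta(\Lambda)$ is defined intrinsically via composition factors, and reconciling them requires the lifting argument used above to rule out higher-layer composition factors buried deep in the Loewy structure. After that, the proof is a formal translation of the left/right minimality correspondence across the anti-equivalence \eqref{E:AntiEquivalence}.
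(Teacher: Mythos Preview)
Your approach is essentially the same as the paper's: both translate the minimal left approximation $\alpha_\Lambda$ across the anti-equivalence \eqref{E:AntiEquivalence} and then check Definition~\ref{D:RingelStrongQH}. Your trace description of $T=\im f_\Lambda$ and the identification $P(\Lambda)/T=\Delta(\Lambda)$ are correct, and the equivalence between injectivity of $f_\Lambda$ and the vanishing of $\Hom_R(\coker\alpha_\Lambda,\PI(R))$ is exactly the mechanism the paper uses.

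There is, however, a genuine gap in your verification of condition~(b). Your composition-factor analysis shows only that every composition factor $S(\Lambda')$ of $\Delta(\Lambda)$ satisfies $l(\Lambda')\le\gamma$; this is just the defining property of the standard module and does \emph{not} give the strict inequality $l(\Lambda')<\gamma$ on $\rad\Delta(\Lambda)$ that condition~(b) demands. Concretely, you must rule out that some $S(\Lambda')$ with $l(\Lambda')=\gamma$ (possibly $\Lambda'=\Lambda$ itself) occurs in $\rad\Delta(\Lambda)$. The paper supplies the missing step: under the anti-equivalence such a composition factor would correspond to a non-isomorphism $\nu\colon\Lambda\to\Lambda'$ with $\dim_k\Lambda'=\dim_k\Lambda$ that does not factor through $\alpha_\Lambda$; but then $\nu$ cannot be surjective, so $\im\nu$ is a principal ideal of strictly smaller dimension, hence lies in $\mathsf{pi}(R)_{>\gamma}$, and the approximation property forces $\nu$ to factor through $\alpha_\Lambda$ after all. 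This short argument genuinely uses that the layer function is $\dim_k R-\dim_k(-)$ and that images of principal ideals are again principal, so it is not ``immediate from the composition-factor analysis above'' as you claim.
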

\begin{proof} Assume $\PI(R)$ has good left approximations $\alpha_\Gamma \colon \Gamma \to \Gamma_{>\gamma}$. 
Using the condition on $\coker \alpha_\Gamma$ and applying $\Hom_R(-, \PI(R))$ yields a short exact sequence
\begin{align}\label{E:ProjRes}
0 \to P(\Gamma_{>\gamma}) \xrightarrow{\iota(\Gamma)} P(\Gamma) \to \Delta(\Gamma) \to 0,
\end{align}
where $\iota(\Gamma)=\Hom_R(\alpha_\Gamma, \PI(R))$ and $\Delta(\Gamma)$ denotes the cokernel of $\iota(\Gamma)$. 
We claim that the ideal layer function defines a left strongly quasi-hereditary structure on $E_R^{\PI}$ such that the $\Delta(\Gamma)$ are standard modules. To see this we have to show that \eqref{E:ProjRes} satisfies conditions (a) and (b) outlined in Definition \ref{D:RingelStrongQH}. Since all direct summands of $P(\Gamma_{>\gamma})$ are of the form $P(\Lambda)$ with $l(\Lambda) > \gamma$ condition (a) is satisfied by construction. Using the anti-equivalence $\Hom_R(-, \PI(R)) \colon \add \PI(R) \to \proj$-$E_R^{\PI}$ condition (b) translates to: every $R$-linear non-isomorphism $\nu\colon \Gamma \to \Lambda$ with $\Lambda \in \mathsf{pi}(R)_{\geq \gamma}$ factors over $\alpha_\Gamma$. By definition of $\alpha_\Gamma$ this holds for $\Lambda \in \mathsf{pi}(R)_{>\gamma}$. If $\Lambda \in \mathsf{pi}(R)_{=\gamma}$, then $\nu$ cannot be surjective for otherwise it is an isomorphism since $\dim_k \Lambda =\dim_k \Gamma$. Therefore, $\im \nu \subsetneq \Lambda$ is a principal left $R$-ideal with $l(\im \nu) > l(\Lambda)=\gamma$. So $\nu$ factors over $\alpha_\Gamma$. 

\smallskip

To see the converse direction,
assume that $\PI(R)$ does not have good left approximations. Then there exists a principal left $R$-ideal $\Gamma$  such that $\Hom_R(\coker \alpha_\Gamma, \PI(R)) \neq 0$. Assume that $E_R^{\PI}$ is quasi-hereditary with respect to the ideal layer function $l$ and let $\Delta(\Gamma)$ be the standard module corresponding to $\Gamma$. Since $\alpha_\Gamma$ is a minimal left $\mathsf{pi}(R)_{>\gamma}$ approximation
\begin{align*}\label{E:ProjRes2}
 P(\Gamma_{>\gamma}) \xrightarrow{\Hom_R(\alpha_\Gamma, \PI(R))} P(\Gamma) \to \Delta(\Gamma) \to 0,
\end{align*}
is the start of a minimal projective resolution of $\Delta(\Gamma)$. By our choice of $\Gamma$ the morphism $\Hom_R(\alpha_\Gamma, \PI(R))$ is not injective. Hence $\Delta(\Gamma)$ has projective dimension greater than $1$ and, using Definition \ref{D:Stronglyqh}, $A$ is not left strongly quasi-hereditary with respect to $l$ in this case.
\end{proof}

\begin{rem} \label{R:Standardinclusions}
 Assume that $E_R^{\PI}$ is quasi-hereditary with respect to the ideal layer function.
One can show that as a set the standard module $\Delta(\Gamma)$ is given by all (residue classes of) monomorphisms starting in $\Gamma$. Indeed if $\nu \colon \Gamma \to \Lambda$ is not a monomorphism then an argument along the lines of the proof of the proposition shows that $\nu$ factors over $\alpha_\Gamma$ and therefore corresponds to the zero element in $\Delta(\Gamma)$.
\end{rem}

Proposition \ref{P:LeftStrong} is related to
\cite[Theorem 5]{Iyamasfiniteness} by Ringel. He shows that for an $R$-module $M$ there exists an $R$-module $N$ such that $\End_R(M \oplus N)$ is left strongly quasi-hereditary and all the indecomposable summands $N$ are submodules of $M$. In particular, if $M$ is an $R$-module such that all submodules are isomorphic to direct summands of $M$, then $\End_R(M)$ is left strongly quasi-hereditary. We will see in Theorem \ref{T:Main} that $\PI(R)$ has this property if $R$ is ideally ordered monomial. However, our proof of Theorem \ref{T:Main} uses Proposition \ref{P:LeftStrong}, so we cannot apply Ringel's result in our approach.

Now we look at the `dual' side.
First we `dualise' Definition \ref{goodleftapproximation} using the same notation.
\begin{defn} \label{goodrightapproximation}
For every principal left ideal $\Gamma$ there is a minimal right $\mathsf{pi}(R)_{>\gamma}$ approximation  
\[
\rho_\Gamma \colon \Gamma_{>\gamma} \to \Gamma
\]
with $\Gamma_{>\gamma} \in \mathsf{pi}(R)_{>\gamma}$. We say that $\PI(R)$ has \emph{good right approximations} if 
\[\Hom_R(\PI(R), \ker \rho_\Gamma)=0.\]  Since $\PI(R)$ contains $R$ as a direct summand this is equivalent to 
\[\ker \rho_\Gamma=0 \] for all principal left $R$-ideals $\Gamma$. 
\end{defn}

\begin{ex} \label{Example: Good right approximations}
\begin{itemize}
\item[(a)] Let $R$ be a finite dimensional monomial algebra. Then $\PI(R)$ has good right approximations. Indeed, let $\Gamma$ be a principal left $R$ ideal. Since $R$ is monomial, $\rad \Gamma$ is a direct sum of principal left ideals in $\mathsf{pi}(R)_{>\gamma}$ and the natural inclusion $\rad \Gamma \to \Gamma$ gives the desired minimal right approximation $\rho_\Gamma$. 
\item[(b)] The algebra in Example \ref{E:Goodleftapproximation} (b) does not have good right approximations: the minimal right approximation of the projective module $P_1$ is $P_2 \oplus P_3 \rightarrow P_1$ and this has kernel $S_4$.
\end{itemize}

\end{ex}

The following result is proved dually to Proposition \ref{P:LeftStrong} 

\begin{prop} \label{P:RightStrong} $E_R^{\PI}=\End_R(\PI(R))$ is right strongly quasi-hereditary with respect to the ideal layer function $l$ if and only if $\PI(R)$ has good right approximations. For example, this holds if $R$ is finite dimensional monomial.
\end{prop}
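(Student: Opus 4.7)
The plan is to dualize the proof of Proposition~\ref{P:LeftStrong}. The main translation I would use is that $E_R^{\PI}$ is right strongly quasi-hereditary with respect to the ideal layer function $l$ if and only if every standard right $E_R^{\PI}$-module has projective dimension at most one. This equivalence follows from the standard $k$-duality $D = \Hom_k(-,k)$, which sends the left costandard $\nabla(\Gamma)$ to the standard right module $\Delta^{\op}(\Gamma) := D\nabla(\Gamma)$ and interchanges injective with projective dimensions, so the injective-dimension condition defining right strong quasi-heredity translates into a projective-dimension condition over the opposite algebra.

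For the forward direction I would replace the contravariant anti-equivalence $\Hom_R(-,\PI(R)) \colon \add \PI(R) \xrightarrow{\sim} E_R^{\PI}\text{-}\proj$ used in Proposition~\ref{P:LeftStrong} by the covariant additive equivalence $\Hom_R(\PI(R),-) \colon \add \PI(R) \xrightarrow{\sim} \proj\text{-}E_R^{\PI}$ onto the category of finitely generated projective right $E_R^{\PI}$-modules. The assumption of good right approximations is precisely that each $\rho_\Gamma$ is a monomorphism, and applying $\Hom_R(\PI(R),-)$ then produces a short exact sequence of projective right $E_R^{\PI}$-modules
\[
0 \to P^{\op}(\Gamma_{>\gamma}) \to P^{\op}(\Gamma) \to \Delta^{\op}(\Gamma) \to 0.
\]
I would then verify that the dual versions of conditions (a) and (b) from Definition~\ref{D:RingelStrongQH} hold for $\Delta^{\op}(\Gamma)$. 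Condition (a) is immediate since $\Gamma_{>\gamma} \in \mathsf{pi}(R)_{>\gamma}$, and condition (b) translates through the equivalence to showing every non-isomorphism $\mu \colon \Lambda \to \Gamma$ with $l(\Lambda) \geq \gamma$ factors over $\rho_\Gamma$. For $l(\Lambda) > \gamma$ this is simply the approximation property; for $l(\Lambda) = \gamma$, a dimension count ($\dim \Lambda = \dim \Gamma$ with $\mu$ non-iso) forces $\mu$ to be non-injective, so $\im \mu \subsetneq \Gamma$ is a principal left ideal of strictly greater layer, and the factorization $\mu \colon \Lambda \twoheadrightarrow \im \mu \to \Gamma_{>\gamma} \xrightarrow{\rho_\Gamma} \Gamma$ is obtained by applying the approximation property to the inclusion $\im \mu \hookrightarrow \Gamma$.

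The converse and the ``for example'' claim are both short. If some $\rho_\Gamma$ has nonzero kernel, then since $R$ is a direct summand of $\PI(R)$ we have $\Hom_R(\PI(R),\ker\rho_\Gamma) \supseteq \ker\rho_\Gamma \neq 0$, so $\Hom_R(\PI(R),\rho_\Gamma)$ has nonzero kernel and the minimal projective resolution of $\Delta^{\op}(\Gamma)$ has length at least two, contradicting right strong quasi-heredity. Finally, Example~\ref{Example: Good right approximations}(a) exhibits $\rad \Gamma \hookrightarrow \Gamma$ as the minimal right $\mathsf{pi}(R)_{>\gamma}$-approximation for any finite dimensional monomial $R$, and this is automatically a monomorphism. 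I do not anticipate a substantive obstacle: the only genuine care required is in the left/right bookkeeping through the duality $D$, since the underlying algebraic arguments are formally parallel to those in the proof of Proposition~\ref{P:LeftStrong}.
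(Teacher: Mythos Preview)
Your proposal is correct and follows exactly the approach the paper indicates: the paper's entire proof is the single sentence ``proved dually to Proposition~\ref{P:LeftStrong}'', and you have spelled out precisely that dualization, passing to right modules via the covariant equivalence $\Hom_R(\PI(R),-)$ and the $k$-duality $D$. The details you give (the short exact sequence for $\Delta^{\op}(\Gamma)$, the verification of conditions (a) and (b), the converse via non-injectivity of $\Hom_R(\PI(R),\rho_\Gamma)$, and the appeal to Example~\ref{Example: Good right approximations}(a)) are all sound and match what the omitted dual argument would require.
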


Combining Proposition \ref{P:LeftStrong} and \ref{P:RightStrong} with Lemma \ref{L:Goodleftapprox} and Example \ref{Example: Good right approximations}(a) yields the following theorem.
\begin{thm} \label{thm: ideally ordered implies strongly quasi-hereditary}
If $R$ is an ideally ordered monomial algebra, then $E_R^\PI$ is both left and right strongly quasi-hereditary with respect to the ordering induced by the ideal layer function.
\end{thm}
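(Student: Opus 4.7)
The theorem is really the confluence of four results already established in this section, so the proof proposal is essentially a recipe for combining them. The plan is to verify the two halves of the strongly quasi-hereditary condition separately, using the same ideal layer function $l$ throughout, and then observe that both halves hold simultaneously because both the left and right good approximation hypotheses are available under the ideally ordered monomial hypothesis.

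For the left strongly quasi-hereditary structure I would invoke Proposition \ref{P:LeftStrong}, which reduces the problem to checking that $\PI(R)$ has good left approximations with respect to the ideal layer function. This is precisely the content of Lemma \ref{L:Goodleftapprox}: under the ideally ordered hypothesis, the minimal left $\mathsf{pi}(R)_{>\gamma}$ approximation $\alpha_\Gamma \colon \Gamma \to \Gamma_{>\gamma}$ of any principal left ideal $\Gamma$ is already surjective, so $\coker \alpha_\Gamma = 0$ and the condition $\Hom_R(\coker \alpha_\Gamma, \PI(R)) = 0$ is automatic. No further work is required at this step.

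For the right strongly quasi-hereditary structure I would invoke Proposition \ref{P:RightStrong}, which reduces to showing that $\PI(R)$ has good right approximations. Since ideally ordered monomial algebras are in particular finite dimensional monomial, this is covered by Example \ref{Example: Good right approximations}(a): the minimal right $\mathsf{pi}(R)_{>\gamma}$ approximation is realised by the radical inclusion $\rad \Gamma \hookrightarrow \Gamma$ (which lies in $\mathsf{pi}(R)_{>\gamma}$ because $R$ is monomial, so the radical of a principal ideal decomposes as a sum of principal ideals of strictly larger layer), and this is injective, giving $\ker \rho_\Gamma = 0$.

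Since both Proposition \ref{P:LeftStrong} and Proposition \ref{P:RightStrong} establish their respective strongly quasi-hereditary structures with respect to the \emph{same} ideal layer function $l$, the two conclusions combine immediately to give the theorem. There is no genuine obstacle in the proof itself; the substantive content lies entirely in the preceding lemmas and propositions, and the interest of the statement is in the somewhat surprising fact that a single partial order simultaneously realises both structures, which as noted in the introduction is a rather restrictive condition (cf.\ Tsukamoto's characterisation in the Auslander case).
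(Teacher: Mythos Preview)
Your proposal is correct and matches the paper's approach exactly: the paper's proof is the single sentence ``Combining Proposition \ref{P:LeftStrong} and \ref{P:RightStrong} with Lemma \ref{L:Goodleftapprox} and Example \ref{Example: Good right approximations}(a) yields the following theorem.'' You have identified precisely these four ingredients and how they fit together.
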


We let ${\mathcal Filt}(\Delta)$ and ${\mathcal Filt}(\nabla)$ denote the full subcategories of $E_R^{\PI}$-$\textnormal{mod}$ of objects filtered by standard and costandard modules respectively.

\begin{rem}\label{R:CSFac}
 Assume that $E_R^{\PI}$ is quasi-hereditary with respect to the ideal layer function. Similarly to the case above, one can show that as a set a costandard module $\nabla(\Lambda)$ is given by all surjections ending in $\Lambda$. In particular, each costandard module has head $S(\Pi)$ for some indecomposable projective $R$-module $\Pi$ and ${\mathcal Filt}(\nabla) \subseteq \fac(P(R))$.
\end{rem}

\begin{cor} \label{C:ClosedSubFac} If $\PI(R)$ has good  right and left approximations, then ${\mathcal Filt}(\Delta)$ is closed under submodules and ${\mathcal Filt}(\nabla)$ is closed under quotients.
\end{cor}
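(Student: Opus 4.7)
The plan is to combine the standard Dlab--Ringel homological characterisation of filtered modules with the dimensional bounds supplied by the left and right strongly quasi-hereditary structures obtained in Propositions~\ref{P:LeftStrong} and~\ref{P:RightStrong}. Recall that for any quasi-hereditary algebra one has
\[
M \in {\mathcal Filt}(\Delta) \iff \Ext^1(M,\nabla(\Lambda))=0 \text{ for all } \Lambda,
\qquad
N \in {\mathcal Filt}(\nabla) \iff \Ext^1(\Delta(\Lambda),N)=0 \text{ for all } \Lambda.
\]
Under our hypothesis, Propositions~\ref{P:LeftStrong} and~\ref{P:RightStrong} give that every standard module $\Delta(\Lambda)$ has projective dimension at most $1$ and every costandard module $\nabla(\Lambda)$ has injective dimension at most $1$.

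For closure of ${\mathcal Filt}(\Delta)$ under submodules, I would take a short exact sequence $0 \to M' \to M \to M'' \to 0$ with $M \in {\mathcal Filt}(\Delta)$ and apply $\Hom(-,\nabla(\Lambda))$ to obtain the long exact sequence
\[
\Ext^1(M,\nabla(\Lambda)) \longrightarrow \Ext^1(M',\nabla(\Lambda)) \longrightarrow \Ext^2(M'',\nabla(\Lambda)).
\]
The leftmost term vanishes by the characterisation of ${\mathcal Filt}(\Delta)$ applied to $M$, while the rightmost term vanishes because $\nabla(\Lambda)$ has injective dimension at most $1$ (the right strongly quasi-hereditary bound). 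Hence $\Ext^1(M',\nabla(\Lambda))=0$ for every $\Lambda$, which places $M'$ in ${\mathcal Filt}(\Delta)$.

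The dual argument handles ${\mathcal Filt}(\nabla)$: for $0 \to M' \to M \to M'' \to 0$ with $M \in {\mathcal Filt}(\nabla)$, applying $\Hom(\Delta(\Lambda),-)$ yields
\[
\Ext^1(\Delta(\Lambda),M) \longrightarrow \Ext^1(\Delta(\Lambda),M'') \longrightarrow \Ext^2(\Delta(\Lambda),M'),
\]
where the leftmost term vanishes by the characterisation of ${\mathcal Filt}(\nabla)$ and the rightmost term vanishes since $\Delta(\Lambda)$ has projective dimension at most $1$ (the left strongly quasi-hereditary bound). Hence $M'' \in {\mathcal Filt}(\nabla)$. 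The main (indeed only) substantive point is that both sides of the dimension bound are needed simultaneously, which is precisely what the joint hypothesis of good left and good right approximations supplies; no computational obstacle arises once the Dlab--Ringel criterion is in hand.
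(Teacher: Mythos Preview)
Your proof is correct and follows the same logical route as the paper: good left approximations $\Rightarrow$ left strongly quasi-hereditary $\Rightarrow$ $\mathrm{pd}\,\Delta(\Lambda)\le 1$ $\Rightarrow$ ${\mathcal Filt}(\nabla)$ closed under quotients, and dually for the other half. The only difference is that the paper cites the implication ``$\mathrm{pd}\,\Delta(\Lambda)\le 1 \Leftrightarrow {\mathcal Filt}(\nabla)$ closed under quotients'' as a black box (Ringel, \cite[Proposition~A.1]{Iyamasfiniteness}), whereas you supply the proof of that implication directly via the Dlab--Ringel $\Ext^1$-characterisation and the long exact sequence; your argument is thus a self-contained unpacking of the cited reference.
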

\begin{proof}
If $\PI(R)$ has good left approximations, then $E_R^{\PI}$ is left strongly quasi-hereditary by Proposition \ref{P:LeftStrong}, and hence all standard objects have projective dimension 1. By \cite[Proposition A.1]{Iyamasfiniteness} all standard modules having projective dimension 1 is equivalent to ${\mathcal Filt}(\nabla)$ being closed under quotients.

The analogous dual statement, using Proposition \ref{P:RightStrong}, shows that when $\PI(R)$ has good right approximations then ${\mathcal Filt}(\Delta)$ is closed under submodules.
\end{proof}

\section{The characteristic tilting module and Ringel duality} \label{S:chartilting}

In the following section we first recall the characteristic tilting module $T$ associated to a quasi-hereditary algebra. Then we show that our algebras $E_R^{\PI}$ are ultra strongly quasi-hereditary in the sense of Conde \cite{Conde17} and use this to determine a subcategory of the additive hull $\add(T)$ of $T$ (Corollary \ref{C:Subcat}). In the proof of our main Theorem \ref{T:Main} we show that these categories coincide for ideally ordered monomial algebras $R$ and as a consequence establish our Ringel-duality formula in this setup.

The following proposition can be found in Ringel \cite{Ringel}, which is based on work of Auslander \& Reiten \cite{AuslanderReitenContravariantlyFinite} and Auslander \& Buchweitz \cite{AuslanderBuchweitz}.

\begin{prop}\label{P:char}
Let $A$ be a quasi-hereditary algebra. Then there exists a tilting module $T \in A$-$\mod$ such that 
\begin{align*}
\add(T)={\mathcal Filt}(\Delta) \cap {\mathcal Filt}(\nabla),
\end{align*}
where ${\mathcal Filt}(\Delta) \cap {\mathcal Filt}(\nabla)$ is the full subcategory of $A$-modules with filtrations by both standard and costandard modules.
\end{prop}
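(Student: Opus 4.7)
The plan is to prove this classical result of Ringel by constructing the characteristic tilting module $T$ as a direct sum of inductively built indecomposables $T(i)$, one for each label $i$ in the poset, and then identifying $\add(T)$ with the intersection ${\mathcal Filt}(\Delta) \cap {\mathcal Filt}(\nabla)$ via a uniqueness argument.

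First I would fix the adapted partial ordering on $I$ and proceed by induction along this order. For minimal $i$ the standard and costandard modules agree, so one sets $T(i) := \Delta(i) = \nabla(i)$. For non-minimal $i$, the aim is to construct a short exact sequence
\[
0 \to \Delta(i) \to T(i) \to X(i) \to 0
\]
in which $X(i)$ is filtered by $\Delta(j)$ with $j < i$ (hence by modules $T(j)$ already constructed), and such that the extension realises a minimal right ${\mathcal Filt}(\Delta)$-approximation of $T(i) \in {\mathcal Filt}(\Delta) \cap {\mathcal Filt}(\nabla)$. Existence of these extensions is obtained from the Auslander--Buchweitz/Auslander--Reiten machinery applied to the contravariantly finite subcategory ${\mathcal Filt}(\Delta)$, using that $\Ext^{\geq 1}_A(\Delta(j), \nabla(k)) = 0$ for all $j,k$.

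Next I would verify that each $T(i)$ lies in ${\mathcal Filt}(\nabla)$, using the standard homological characterization that for quasi-hereditary $A$ a $\Delta$-filtered module $M$ is also $\nabla$-filtered if and only if $\Ext^1_A(\Delta(j), M) = 0$ for every $j$. The vanishing is arranged by the approximation property in the inductive step: lifting any extension of $\Delta(j)$ by $T(i)$ produces a factorisation through $X(i)$, which by induction is already in ${\mathcal Filt}(\nabla)$. Setting $T := \bigoplus_i T(i)$, the tilting axioms follow: $\Ext^{\geq 1}_A(T,T) = 0$ from membership in ${\mathcal Filt}(\Delta) \cap {\mathcal Filt}(\nabla)$ combined with the Ext-orthogonality above; finite projective dimension of $T$ is inherited from finite global dimension of $A$; and the required $\add(T)$-coresolution of $A$ is produced by iteratively right-approximating $A \in {\mathcal Filt}(\Delta)$ by objects in ${\mathcal Filt}(\nabla)$, with termination controlled by injective dimensions.

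It remains to prove $\add(T) = {\mathcal Filt}(\Delta) \cap {\mathcal Filt}(\nabla)$. The inclusion $\add(T) \subseteq {\mathcal Filt}(\Delta) \cap {\mathcal Filt}(\nabla)$ holds by construction. Conversely, given an indecomposable $M \in {\mathcal Filt}(\Delta) \cap {\mathcal Filt}(\nabla)$, let $i$ be the maximal index appearing in a $\Delta$-filtration of $M$; one isolates a submodule $\Delta(i) \hookrightarrow M$ whose quotient is filtered by $\Delta(j)$ with $j<i$, and the Ext-vanishing of $M$ against standard modules forces this extension to be (a direct summand of) the defining extension of $T(i)$, identifying $M$ with $T(i)$ by indecomposability. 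The main obstacle is the careful inductive construction of the $T(i)$ so that the Ext-vanishing holds simultaneously at every stage; once this scaffolding is in place, the tilting-module verification and the identification of $\add(T)$ reduce to standard manipulations in the quasi-hereditary framework.
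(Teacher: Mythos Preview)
The paper does not provide its own proof of this proposition; it simply records it as a classical result, attributing it to Ringel \cite{Ringel} building on Auslander--Reiten \cite{AuslanderReitenContravariantlyFinite} and Auslander--Buchweitz \cite{AuslanderBuchweitz}. Your proposal is a sketch of Ringel's original argument, so in spirit you are aligned with what the paper invokes.

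That said, there are a couple of imprecisions worth flagging. First, the parenthetical ``hence by modules $T(j)$ already constructed'' does not follow from $X(i)$ being filtered by $\Delta(j)$ with $j<i$; in Ringel's construction one arranges $X(i) \in \add\bigl(\bigoplus_{j<i} T(j)\bigr)$ directly (via universal extensions), and this is what makes the inductive verification that $T(i) \in {\mathcal Filt}(\nabla)$ go through. Second, your argument for the converse inclusion $\,{\mathcal Filt}(\Delta) \cap {\mathcal Filt}(\nabla) \subseteq \add(T)$ is not quite complete: the claim that the Ext-vanishing ``forces this extension to be (a direct summand of) the defining extension of $T(i)$'' needs more than indecomposability of $M$. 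The standard route is instead to use that any $M \in {\mathcal Filt}(\Delta) \cap {\mathcal Filt}(\nabla)$ is Ext-orthogonal to $T$ on both sides, and then that the $\add(T)$-coresolution of $A$ (or the $\add(T)$-resolution of the injective cogenerator) produces a finite $\add(T)$-resolution of $M$ which must split. These are repairable details rather than a wrong approach.
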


\begin{defn}
A tilting module $T$ occurring in Proposition \ref{P:char} is called a \emph{characteristic tilting module}. The \emph{Ringel dual} $\mathfrak{R}(A)$ of an algebra $A$ is defined by
\[
\mathfrak{R}(A):=\End_A(T)^{\op}
\]
for $T$ the basic characteristic tilting module consisting of one copy of each indecomposable module in ${\mathcal Filt}(\Delta) \cap {\mathcal Filt}(\nabla)$ up to isomorphism: i.e. we assume $\mathfrak{R}(A)$ is a basic algebra.
\end{defn}

The notion of a ultra strongly quasi-hereditary algebras was introduced by Conde, see  \cite[Section 2.2.2]{Conde17}. 

\begin{defn}
A quasi-hereditary algebra $A$ is \emph{left ultra strongly quasi-hereditary} if a projective module $P_i$ is filtered by costandard modules whenever the corresponding costandard module $\nabla_i$ is simple.
\end{defn}

Let $e_0 \in E_R^{\PI}=\End_R(\PI(R))$ be the idempotent corresponding to the direct summand $R$ of $\PI(R)$. Note that $e_0$ is primitive if and only if $R$ is local. We have the following. 

\begin{prop} \label{P:AefiltCS}
Let $R$ be a finite dimensional algebra.
Assume that $\PI(R)$ has good left approximations, so that $E_R^{\PI}$ is left strongly quasi-hereditary with respect to the ideal layer function $l$. Then the following conditions are equivalent:
 \begin{itemize}
 \item[(a)] $E_R^{\PI}e_0$ is filtered by costandard objects.
 \item[(b)] $\alpha_{\Gamma}\colon \Gamma \to \Gamma_{>\gamma}$ is surjective for all principal $R$-ideals $\Gamma$.
 \item[(c)] $E_R^{\PI}$ is left ultra strongly quasi-hereditary.
 \end{itemize}
If $R$ is monomial then these conditions are equivalent to  
\begin{itemize}
 \item[(d)] $R$ is ideally ordered.
 \end{itemize}
\end{prop}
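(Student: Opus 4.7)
The plan is to first establish (a) $\Leftrightarrow$ (b) $\Leftrightarrow$ (c) in the general setting where $\PI(R)$ has good left approximations, and only then bring in the monomial hypothesis for (b) $\Leftrightarrow$ (d).

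To tackle (a) $\Leftrightarrow$ (c), I would identify the projectives with simple costandard. Using the description of $\nabla(\Lambda)$ from Remark \ref{R:CSFac} as spanned by surjections from indecomposable principal ideals onto $\Lambda$, I would show that $\nabla(\Lambda)$ is simple exactly when $\Lambda$ is projective as an $R$-module, i.e.\ $\Lambda \cong Re_i$ for some primitive idempotent $e_i$: if $\Lambda \cong Re_i$, any surjection onto $\Lambda$ from an indecomposable principal ideal splits and is therefore an isomorphism; conversely, if $\Lambda = Rp$ with $p \in e_iR$ and $\Lambda \not\cong Re_i$, the canonical surjection $Re_i \twoheadrightarrow Rp$ (given by $e_i \mapsto p$) contributes an additional composition factor $S(Re_i)$ to $\nabla(\Lambda)$. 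Decomposing $R = \bigoplus_i Re_i$ as a left $R$-module, the idempotent $e_0$ decomposes as $\sum_i e_0^{(i)}$ with $E_R^{\PI} e_0^{(i)} \cong P(Re_i)$, so $E_R^{\PI} e_0 \cong \bigoplus_i P(Re_i)$ is precisely the direct sum of the projectives with simple costandard. Since ${\mathcal Filt}(\nabla)$ is closed under direct sums and summands, this gives (a) $\Leftrightarrow$ (c).

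For (a) $\Leftrightarrow$ (b), I would test membership in ${\mathcal Filt}(\nabla)$ via vanishing of $\Ext^{1}$. Proposition \ref{P:LeftStrong} gives $E_R^{\PI}$ left strongly quasi-hereditary, so $\Delta(\Gamma)$ has projective resolution $0 \to P(\Gamma_{>\gamma}) \xrightarrow{\iota(\Gamma)} P(\Gamma) \to \Delta(\Gamma) \to 0$ with $\iota(\Gamma) = \Hom_R(\alpha_\Gamma, \PI(R))$, and $E_R^{\PI} e_0 \in {\mathcal Filt}(\nabla)$ iff $\Ext^{1}(\Delta(\Gamma), E_R^{\PI} e_0) = 0$ for every $\Gamma$ (higher Exts vanish automatically as standards have projective dimension at most $1$). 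Applying $\Hom_{E_R^{\PI}}(-, E_R^{\PI} e_0)$ to the resolution and using the anti-equivalence \eqref{E:AntiEquivalence} to identify $\Hom_{E_R^{\PI}}(P(\Lambda), P(R)) \cong \Hom_R(R, \Lambda) \cong \Lambda$, the connecting map becomes $\alpha_\Gamma \colon \Gamma \to \Gamma_{>\gamma}$, so
\[
\Ext^{1}(\Delta(\Gamma), E_R^{\PI}e_0) \cong \coker \alpha_\Gamma.
\]
Hence (a) holds iff every $\alpha_\Gamma$ is surjective, which is exactly (b).

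For (b) $\Leftrightarrow$ (d) in the monomial setting, (d) $\Rightarrow$ (b) is Lemma \ref{L:Goodleftapprox}. For the converse, given a primitive idempotent $e \in R$ and monomials $m, n \in eR$, I would iterate the minimal left approximation starting from $Re$ to obtain a chain of surjections
\[
Re =: \Gamma_0 \xrightarrow{\alpha_{\Gamma_0}} \Gamma_1 \xrightarrow{\alpha_{\Gamma_1}} \Gamma_2 \xrightarrow{\alpha_{\Gamma_2}} \cdots
\]
of principal ideals with strictly decreasing dimensions, which therefore terminates in finitely many steps. Both $Rm$ and $Rn$ are quotients of $\Gamma_0$ via $e \mapsto m$ and $e \mapsto n$. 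Inductively, whenever both are strict quotients of $\Gamma_j$, they lie in $\mathsf{pi}(R)_{>l(\Gamma_j)}$, so the surjections $\Gamma_j \twoheadrightarrow Rm$ and $\Gamma_j \twoheadrightarrow Rn$ factor through $\alpha_{\Gamma_j}$ producing surjections $\Gamma_{j+1} \twoheadrightarrow Rm$ and $\Gamma_{j+1} \twoheadrightarrow Rn$. Since the chain is finite, some $\Gamma_j$ must eventually be isomorphic to one of $Rm, Rn$, say to $Rm$; combining this isomorphism with the simultaneously surjective $\Gamma_j \twoheadrightarrow Rn$ yields the desired surjection $Rm \twoheadrightarrow Rn$, establishing (d).

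The main obstacle I expect is the inductive chain argument for (b) $\Rightarrow$ (d): one must carefully track when each of $Rm, Rn$ is a strict quotient versus isomorphic to the current $\Gamma_j$ and combine the surjectivity provided by (b) with the factoring property of $\alpha_{\Gamma_j}$ as a left approximation. The other equivalences boil down to mostly routine but careful use of the contravariant anti-equivalence $\Hom_R(-, \PI(R))$ and of the standard characterization ${\mathcal Filt}(\nabla) = \{M : \Ext^{\geq 1}(\Delta, M) = 0\}$ for quasi-hereditary algebras.
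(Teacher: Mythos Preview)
Your arguments for (a) $\Leftrightarrow$ (b) $\Leftrightarrow$ (c) are correct and essentially match the paper's. Your identification $\Ext^{1}(\Delta(\Gamma), E_R^{\PI}e_0) \cong \coker \alpha_\Gamma$ is a clean repackaging of what the paper does via Ringel's criterion and the anti-equivalence \eqref{E:AntiEquivalence}, and your treatment of (a) $\Leftrightarrow$ (c) via Remark \ref{R:CSFac} is exactly the paper's.

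There is, however, a genuine gap in your (b) $\Rightarrow$ (d). You assert that the iterated approximations form ``a chain of surjections of \emph{principal ideals} with strictly decreasing dimensions'', but you never justify that each $\Gamma_{j+1} := (\Gamma_j)_{>l(\Gamma_j)}$ is again a principal (i.e.\ indecomposable) ideal. A priori $\Gamma_{j+1}$ is only a direct sum of principal ideals in $\mathsf{pi}(R)_{>l(\Gamma_j)}$. Without indecomposability two things break: first, condition (b) only asserts surjectivity of $\alpha_\Gamma$ for \emph{principal} $\Gamma$, so you cannot invoke it at the next step; second, the ``strictly decreasing dimensions'' claim fails, since a direct sum of several small principal ideals can have total dimension exceeding $\dim \Gamma_j$, so the chain need not terminate. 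Your termination step (``some $\Gamma_j$ must eventually be isomorphic to one of $Rm, Rn$'') then has no force.

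This is precisely the point where the paper does real work: it proves, using both the surjectivity in (b) and the monomial hypothesis, that if $\Gamma$ is indecomposable then so is $\Gamma_{>\gamma}$. The argument is elementary but not automatic---one writes $\Gamma \cong Rp$, assumes $\Gamma_{>\gamma} \cong \bigoplus Rq_i$, uses surjectivity of $\alpha_\Gamma$ to locate the image of $p$, and then exploits the monomial structure (lowest-degree terms) to force all but one summand to vanish. Once this is in hand, your chain argument (or the paper's slight variant, which shows every $Rx$ with $x \in eR$ actually appears in the chain) goes through. The obstacle you flagged---tracking strict quotients versus isomorphisms---is comparatively routine; the indecomposability along the chain is the substantive missing step, and it is exactly where the monomial assumption enters.
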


\begin{proof}
We first show that (b) implies (a). By \cite[Theorem 4]{Ringel}, it suffices to show that $\Ext^1_{E_R^{\PI}}(\Delta(\Gamma), P(Re_i))=0$ for all principal left $R$ ideals $\Gamma$ and all primitive idempotents $e_i \in R$. We can assume that $\Delta(\Gamma)$ is not projective. Then applying $\Hom_{R}(-,\PI(R))$ to $\alpha_{\Gamma}$ produces the projective resolution 
\begin{align*}
0 \to P(\Gamma_{>\gamma}) \xrightarrow{\iota(\Gamma)} P(\Gamma) \to \Delta(\Gamma) \to 0,
\end{align*}
and we have to show that every morphism $P(\Gamma_{>\gamma}) \to P(Re_i)$ factors over $\iota(\Gamma)$. Applying the anti-equivalence given in equation (\ref{E:AntiEquivalence})
translates this statement to: every morphism $\varphi\colon Re_i \to \Gamma_{>\gamma}$ factors over $\alpha_\Gamma \colon \Gamma \to \Gamma_{>\gamma}$. This holds since $Re_i$ is projective and $\alpha_\Gamma$ is surjective by assumption.

Conversely, if $\alpha_\Gamma$ is not surjective for some principal ideal $\Gamma$ then there exists $x \in \Gamma_{>\gamma} \setminus \im \alpha_\Gamma$. Since $R$ is free there is an $R$-linear map $R \to \Gamma_{>\gamma}$, $1 \mapsto x$, which by construction does not factor over $\alpha_\Gamma$. In combination with the anti-equivalence and projective resolution above this shows $\Ext^1_{E_R^{\PI}}(\Delta(\Gamma), P(R)) \neq 0$ and \cite[Theorem 4]{Ringel} completes the proof that $(a)$ implies $(b)$.

That (a) is equivalent to (c) follows from the fact that $\nabla(\Lambda)$ is simple if and only if $\Lambda$ is projective, see Remark \ref{R:CSFac}, and hence $\nabla(\Lambda)$ simple implies $P(\Lambda)$ is a direct summand of $E^{\PI}_R e_0$.

Let $R$ be monomial. The implication (d) $\Rightarrow$ (b) follows from Lemma \ref{L:Goodleftapprox}. We now assume (b) and prove the converse. 

Firstly, for any indecomposable principal ideal $\Gamma$ the minimal left approximation $\alpha_\Gamma \colon \Gamma \to \Gamma_{>\gamma} $ is surjective by assumption (b), and we claim that $\Gamma_{>\gamma}$ is indecomposable.

To show this take $p \in eR$ for $e$ a primitive idempotent and consider the principal ideal $\Gamma \cong Rp$. Now suppose that there is a decomposition $\Gamma_{>\gamma} \cong \bigoplus Rq_i$ for some principal ideals $Rq_i$. As $\alpha_{\Gamma}$ is surjective, after relabelling  we can assume that the image of $p$ is $(q_1, \dots, q_n)$ and $q_1 \neq 0$. As the morphism $\alpha_{\Gamma}$ is surjective there must exist some $r \in Re$ such that $\alpha_{\Gamma}(rp)=(q_1,0, \dots, 0)$; i.e. $rq_1 =q_1$ and $r q_j = 0$ for $j \ge 2$. As $R$ is monomial, by considering the monomial of lowest degree occurring in $q_1$ and $rq_1 =q_1$ we can see that the degree 0 primitive idempotent $e$ must occur in $r$. Then we can rewrite $r=e+r'$ where all monomials occurring in $r'$ have degree greater than $0$. As a result, $q_j$ must be zero as $0=rq_j= q_j + r'q_j$ so there can be no non-zero monomial of lowest degree occurring in $q_j$. Hence $q_j=0$ for $j\ge 2$, the decomposition is a trivial decomposition $Rq_1 \cong Rq_1 \oplus 0 \oplus \dots \oplus 0$, and $\Gamma_{> \gamma}$ is indecomposable.

This allows the successive construction of left $\mathsf{pi}_{>k}$ approximations starting with the indecomposable principal ideal $Re$ 
\[
Re \xrightarrow{\alpha_{Re}} Re_{>i_1} \xrightarrow{\alpha_{{Re}_{>i_1}}} Re_{>i_2} \xrightarrow{\alpha_{Re_{>i_2}}} \dots \xrightarrow{\alpha_{Re_{>i_{n-1}}}} Re_{>i_n}
\]
where $i_1=l(Re)$, $i_{j+1}=l(Re_{>i_j})$, and $\alpha_{Re_{>i_j}}:Re_{>i_j} \rightarrow Re_{>i_{j+1}}$ is the minimal left $\mathsf{pi}(R)_{> i_{j+1}}$ approximation. Each $Re_{>i_j}$ is indecomposable, and the composition $\alpha_k: Re \rightarrow Re_{>i_k}$ of the left approximations is again a left approximation. 

We claim that any indecomposable principal ideal $Rx$ with $x \in eR$ is isomorphic to one of these successive approximations. To see this choose $k$ to be maximal such that $l(Rx) > i_k$. Then there is a surjection $\pi:Re \rightarrow Rx$, and as $Rx \in \mathsf{pi}(R)_{>i_k}$ this must factor through the left approximation $\alpha_k:Re \rightarrow Re_{>i_k}$ by a surjection $\phi:Re_{>i_k} \rightarrow Rx$. In particular, $\dim Re_{>i_k} \ge \dim Rx$ so $l(Re_{>i_k}) \le l(Rx)$. But, by the definition of $k$, it is true that $i_{k+1}=l(Re_{>i_k})  \ge l(Rx)$, hence it must be the case that $l(Re_{>i_k}) = l(Rx)$ so $\dim \, Re_{>i_k} =\dim \, Rx$ and hence the surjective morphism $\phi$ is an isomorphism $Re_{>i_k}  \cong  Rx$.

Finally, any pair $Rx$ and $Ry$ of principal ideals with $x, y \in eR$ occur (up to isomorphism) in the successive approximation sequence, in which every morphism is surjective by assumption (b), and hence there is a surjection between them. This proves that the ideally ordered condition holds.
\end{proof}

\begin{ex}
The non-monomial algebra in Example \ref{E:Goodleftapproximation} (b) satisfies the equivalent conditions (a), (b) and (c) of the theorem.
\end{ex}

\begin{cor}\label{C:Subcat} Suppose that $\PI(R)$ has both good left and right approximations. Then
$\sub(E_R^{\PI}e_0) \cap \fac(E_R^{\PI}e_0) \subseteq {\mathcal Filt}(\Delta) \cap {\mathcal Filt}(\nabla) = \add(T)$.
\end{cor}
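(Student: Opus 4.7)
Since ${\mathcal Filt}(\Delta) \cap {\mathcal Filt}(\nabla) = \add(T)$ is Proposition \ref{P:char}, the substantive content of the corollary is the containment
\[
\sub(E_R^{\PI}e_0) \cap \fac(E_R^{\PI}e_0) \subseteq {\mathcal Filt}(\Delta) \cap {\mathcal Filt}(\nabla).
\]
My plan is to establish the two stronger inclusions $\sub(E_R^{\PI}e_0) \subseteq {\mathcal Filt}(\Delta)$ and $\fac(E_R^{\PI}e_0) \subseteq {\mathcal Filt}(\nabla)$ separately, and then intersect.

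The first inclusion is the more direct one. Since $e_0$ is an idempotent, $E_R^{\PI}e_0$ is a projective direct summand of the regular module $E_R^{\PI}$. Propositions \ref{P:LeftStrong} and \ref{P:RightStrong} apply under the present hypotheses and show that $E_R^{\PI}$ is simultaneously left and right strongly quasi-hereditary with respect to the ideal layer ordering; in particular it is quasi-hereditary, so every projective module, and hence $(E_R^{\PI}e_0)^{\oplus n}$ for every $n$, is filtered by standard modules. By Corollary \ref{C:ClosedSubFac} the class ${\mathcal Filt}(\Delta)$ is closed under submodules, so every $M \in \sub(E_R^{\PI}e_0)$ lies in ${\mathcal Filt}(\Delta)$.

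For the second inclusion the natural dual route is to show $E_R^{\PI}e_0 \in {\mathcal Filt}(\nabla)$; the desired inclusion $\fac(E_R^{\PI}e_0) \subseteq {\mathcal Filt}(\nabla)$ then follows immediately from closure of ${\mathcal Filt}(\nabla)$ under quotients (again Corollary \ref{C:ClosedSubFac}). Establishing $E_R^{\PI}e_0 \in {\mathcal Filt}(\nabla)$ is precisely condition (a) of Proposition \ref{P:AefiltCS}, and this is the main obstacle: under good left approximations it is equivalent to the surjectivity of every minimal left approximation $\alpha_\Gamma$ (condition (b)) or to $E_R^{\PI}$ being left ultra strongly quasi-hereditary (condition (c)), none of which is a formal consequence of having only good left and right approximations. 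For the principal application to ideally ordered monomial algebras, condition (b) is automatic by Lemma \ref{L:Goodleftapprox}, so the corollary holds as stated in that setting; more generally, one could supplement this with a $k$-duality argument, comparing the projective $E_R^{\PI}e_0$ with the injective left $E_R^{\PI}$-module $\Hom_k(e_0 E_R^{\PI}, k)$ (injective because it is the $k$-dual of a right projective), which is automatically filtered by costandards and yields the conclusion by the same closure argument.
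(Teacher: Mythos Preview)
Your argument is essentially the paper's: both reduce to showing $E_R^{\PI}e_0 \in {\mathcal Filt}(\Delta)$ (automatic, since it is projective) and $E_R^{\PI}e_0 \in {\mathcal Filt}(\nabla)$ (via Proposition~\ref{P:AefiltCS}(a)), and then invoke Corollary~\ref{C:ClosedSubFac} to pass to submodules and quotients.

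You are right to flag the second step, and in fact you have spotted a subtlety the paper glosses over. The paper's proof simply cites ``Proposition~\ref{P:AefiltCS}(a)'' as though it were established under the stated hypotheses, but Proposition~\ref{P:AefiltCS} only asserts that (a), (b), (c) are \emph{equivalent}; it does not derive any of them from good left and right approximations alone. Indeed, Example~\ref{E:Goodleftapproximation}(a) is a monomial algebra (hence has good right approximations by Example~\ref{Example: Good right approximations}(a)) with good left approximations, yet one $\alpha_\Gamma$ is not surjective, so condition (b) and hence (a) fails there. Thus the corollary, read literally, requires the additional input that one of the equivalent conditions (a)--(c) of Proposition~\ref{P:AefiltCS} holds. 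The paper tacitly assumes this, and the only use of the corollary is in Theorem~\ref{T:Main} where $R$ is ideally ordered monomial and Lemma~\ref{L:Goodleftapprox} supplies (b) directly --- exactly as you note.

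Your proposed $k$-duality patch, however, does not work: the injective $(e_0 E_R^{\PI})^{\dagger}$ is certainly filtered by costandards, but this says nothing about the projective $E_R^{\PI}e_0$ unless you already know they coincide (which would amount to a self-injectivity-type statement you have no reason to expect). Drop that sentence; the honest fix is to either add surjectivity of the $\alpha_\Gamma$ (equivalently, left ultra strong quasi-heredity) to the hypotheses of the corollary, or to restrict to the ideally ordered monomial case from the outset.
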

\begin{proof}
By the definition of a quasi-hereditary algebra every projective module is filtered by standard modules. Therefore, $(E_R^{\PI}e_0)^{\oplus n} \in {\mathcal Filt}(\Delta)$ and by Proposition \ref{P:AefiltCS}(a), we also have $(E_R^{\PI}e_0)^{\oplus n} \in {\mathcal Filt}(\nabla)$. Now Corollary \ref{C:ClosedSubFac} yields $\sub(E_R^{\PI}e_0) \subseteq {\mathcal Filt}(\Delta)$ and  $\fac(E_R^{\PI}e_0) \subseteq {\mathcal Filt}(\nabla)$. This implies the claim.
\end{proof}

\begin{rem}
In combination with Remark \ref{R:CSFac}, we see that when $\PI(R)$ has both good left and right approximations $\fac(E_R^{\PI}e_0)={\mathcal Filt}(\nabla)$. For ideally ordered monomial algebras $R$, Theorem \ref{T:Main}(e) shows that $\sub(E_R^{\PI}e_0)={\mathcal Filt}(\Delta)$ holds as well.
\end{rem}

\begin{rem}
Let $R=R_2$ be the non-monomial algebra from Example \ref{E:Goodleftapproximation} (b). The algebra $E_R^{\PI}=\End_R(\PI(R))$ is left ultra strongly quasi-hereditary with respect to the ideal layer function (in particular, $E_R^{\PI}e_0$ is filtered by costandard modules) but not right strongly quasi-hereditary, so ${\mathcal Filt}(\Delta)$ is not closed under subobjects. It turns out that there is precisely one indecomposable subobject of $E_R^{\PI} e_0$ which is not filtered by standard modules. This module is also a quotient of $E_R^{\PI} e_0$ and therefore
$\sub(E_R^{\PI}e_0) \cap \fac(E_R^{\PI}e_0) \not\subseteq {\mathcal Filt}(\Delta) \cap {\mathcal Filt}(\nabla)=\add(T)$. Restricting to the local submodules of $E_R^{\PI}e_0$ yields the desired inclusion into $\add(T)$ in this case and can be used to show a version of the Ringel duality formula \eqref{E:AlternativeFormula} in this example. Unfortunately, we don't know how to fit this example into a larger framework.
\end{rem}

\section{An equivalence from idempotents}
\noindent
In this section, we show that there is an equivalence of categories 
\[
\Hom_A(Ae_0, -)\colon \sub(Ae_0) \cap \fac(Ae_0) \to \sub(e_0Ae_0).
\]
where $A=E^\PI_R$ for a finite dimensional algebra $R$ with $\PI(R)$ finitely generated and $e_0 \in A$ is the idempotent corresponding to the projection onto $R$.

To show this we recall several well-known lemmas. 

\begin{lem}\label{L:Serre}
Let $\ca$ be an abelian category with Serre subcategory $\cs$ and let $q \colon \ca \to \ca/\cs$ be the quotient functor. Then the restriction of $q$
\begin{align*}
^\perp\cs \cap \cs^\perp \to \ca/\cs  
\end{align*}
is fully faithful. Here 
\begin{align*}
&^\perp\cs:=\{ X \in \ca \mid \Hom_\ca(X, S)=0 \text{  for all } S \text{ in } \cs \}\text{, and } \\
&\cs^\perp:=\{ Y \in \ca \mid \Hom_\ca(S, Y)=0 \text{  for all } S \text{ in } \cs \}.
\end{align*}
\end{lem}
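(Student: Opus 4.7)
The plan is to invoke the standard description of the morphism sets in the Serre quotient $\ca/\cs$ and observe that the two orthogonality conditions collapse the indexing system of that description. Recall that for any pair $X, Y \in \ca$ one has the presentation
\[
\Hom_{\ca/\cs}(X, Y) = \varinjlim_{(X', Y'')} \Hom_\ca(X', Y/Y''),
\]
where the colimit is taken over pairs of subobjects $X' \subseteq X$ with $X/X' \in \cs$ and $Y'' \subseteq Y$ with $Y'' \in \cs$, and where the quotient functor $q$ sends $f \colon X \to Y$ to the class represented by $(X, 0, f)$. (This can be taken as the definition, as in Gabriel's construction.)

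Given $X \in {}^\perp \cs$, I would argue that for any subobject $X' \subseteq X$ with $X/X' \in \cs$ the canonical projection $X \to X/X'$ is an element of $\Hom_\ca(X, X/X')$, which vanishes by definition of ${}^\perp\cs$; this forces $X/X'=0$ and hence $X'=X$. Symmetrically, given $Y \in \cs^\perp$ and any subobject $Y'' \subseteq Y$ with $Y'' \in \cs$, the inclusion $Y'' \hookrightarrow Y$ lies in $\Hom_\ca(Y'', Y)=0$, and being a monomorphism this forces $Y''=0$. Consequently, for $X \in {}^\perp\cs$ and $Y \in \cs^\perp$ the indexing category of the colimit has a unique object $(X, 0)$, and we obtain
\[
\Hom_{\ca/\cs}(X, Y) = \Hom_\ca(X, Y/0) = \Hom_\ca(X, Y),
\]
with the identification manifestly realised by $q$. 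This establishes full faithfulness of the restricted functor.

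I do not anticipate a serious obstacle beyond recalling the correct form of the colimit, so the argument could alternatively be given by citing a standard reference (e.g.\ Gabriel's \emph{Des cat\'egories ab\'eliennes} or Popescu's book on abelian categories). The only small point of care is to use the symmetric bilateral formulation of the colimit — varying both $X'$ in subobjects of $X$ with quotient in $\cs$ and $Y''$ in sub-$\cs$-objects of $Y$ — so that the two hypotheses ${}^\perp \cs$ and $\cs^\perp$ play visibly parallel roles in cutting the indexing down to the single pair $(X,0)$.
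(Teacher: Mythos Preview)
Your proof is correct and follows essentially the same approach as the paper: both arguments invoke the colimit description of morphisms in the Serre quotient and observe that the orthogonality hypotheses reduce the indexing system to the single pair $(X,0)$. Your version is simply a more detailed unpacking of the paper's one-line justification, and you correctly note in passing that only $X \in {}^\perp\cs$ and $Y \in \cs^\perp$ are needed (rather than both objects lying in the intersection).
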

\begin{proof}
This follows from the description of homomorphism spaces in the quotient category as colimits. Indeed for $X, Y \in {^\perp\cs \cap \cs^\perp}$ the colimit describing $\Hom_{\ca/\cs}(X, Y)$ is taken over the single pair of subobjects $(X, 0)$ and the quotient functor sends a morphism $f\colon X \to Y$ to $f$.
\end{proof}

The following lemma can be found in \cite[Proposition 5.3 (b)]{GeigleLenzingPerpendicularCategories}

\begin{lem}\label{L:MultByIdemp}
Let $B$ be a noetherian ring and let $e \in B$ be an idempotent. Then 
\begin{align*}
F=\Hom_B(Be, -)\colon B-\mod \to eBe-\mod 
\end{align*} 
is an exact quotient functor with kernel $B/BeB-\mod$. In particular, 
$B/BeB-\mod$ is a Serre-subcategory in $B-\mod$.  
\end{lem}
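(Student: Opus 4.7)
The plan is to prove the lemma by first identifying $F$ with a very concrete functor, checking exactness, computing the kernel, and finally recognising $F$ as a Serre quotient via its left adjoint.

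First I would use the canonical isomorphism $\Hom_B(Be,M)\xrightarrow{\sim} eM$, $\varphi\mapsto\varphi(e)$, to identify $F$ with the functor $M\mapsto eM$, viewed as a left $eBe$-module via the natural action. Since $1=e+(1-e)$ is an orthogonal decomposition, $B=Be\oplus B(1-e)$ as left $B$-modules, so $Be$ is projective. Therefore $\Hom_B(Be,-)$ is exact, i.e.\ $F$ is exact.

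Next I would identify the kernel. For $M\in B\text{-}\mod$ we have $F(M)=0$ iff $eM=0$. On one hand, if $eM=0$, then for any $b,b'\in B$ and $m\in M$ the element $beb'm=be(b'm)$ lies in $beM=0$, so $BeBM=0$, which means $M$ is a $B/BeB$-module. Conversely, if $BeBM=0$, then $em=(e\cdot 1\cdot e)m\in eBeM\subseteq BeBM=0$, so $eM=0$. Hence $\ker F=B/BeB\text{-}\mod$, and being the kernel of an exact functor it is automatically a Serre subcategory (closed under subobjects, quotients, and extensions).

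The main obstacle is showing that $F$ is a \emph{quotient} functor, not merely an exact functor killing a Serre subcategory. For this I would exhibit a left adjoint
\[
L\colon eBe\text{-}\mod\to B\text{-}\mod,\qquad L(N)=Be\otimes_{eBe}N,
\]
whose adjunction with $F$ follows from the standard tensor-hom adjunction together with $\Hom_B(Be,M)\cong eM=\Hom_{eBe}(eBe,eM)$. The counit composition $F\circ L(N)=e(Be\otimes_{eBe}N)\cong eBe\otimes_{eBe}N\cong N$ is an isomorphism, so $L$ is fully faithful. By the standard Gabriel theory of localisations at Serre subcategories (see e.g.\ \cite{GeigleLenzingPerpendicularCategories}), an exact functor with a fully faithful left adjoint is (canonically equivalent to) the Serre quotient by its kernel; applied here this identifies $F$ with the quotient functor $B\text{-}\mod\to B\text{-}\mod/(B/BeB\text{-}\mod)$, completing the proof. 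I expect the only subtle point to be invoking (or spelling out) this last general categorical fact; everything else is a direct verification.
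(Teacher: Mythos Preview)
The paper does not actually prove this lemma; it simply cites \cite[Proposition 5.3(b)]{GeigleLenzingPerpendicularCategories}. Your proposal supplies a correct and essentially self-contained proof of that cited result, so in that sense you are doing strictly more than the paper.

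One small comment on the final step. The criterion you invoke is usually stated for a fully faithful \emph{right} adjoint (the section functor): an exact functor between abelian categories with a fully faithful right adjoint is the Serre quotient by its kernel. In the present situation $F$ also admits the right adjoint $G=\Hom_{eBe}(eB,-)$, and the same one-line computation $FG(N)\cong \Hom_{eBe}(eBe,N)\cong N$ shows $G$ is fully faithful, which matches the standard formulation directly. Your left-adjoint version is equally correct --- it follows by passing to opposite categories, since the Serre quotient construction is self-dual --- but the right-adjoint phrasing is what you will most likely find verbatim in the Geigle--Lenzing reference. Either way, the argument is sound.
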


\begin{cor} \label{C:perp1} In the notation of Lemma \ref{L:MultByIdemp}, we have $\fac(Be) \subseteq ^\perp\!(B/BeB-\mod)$.
\end{cor}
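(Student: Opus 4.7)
The plan is to unwind the definitions and reduce the statement to the single identity $\Hom_B(Be, S) = 0$ for $S$ a $B/BeB$-module. First I would take an object $M$ of $\fac(Be)$ and, by definition of $\fac$, write $M$ as a direct summand of some quotient $Be^{\oplus n}/K$; simultaneously pick an arbitrary object $S$ of $B/BeB\text{-}\mod$, which I regard as a $B$-module on which the two-sided ideal $BeB$ acts as zero.

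The key computation is then the standard identification $\Hom_B(Be, S) \cong eS$. Since $e \in BeB$ and $BeB \cdot S = 0$, I would conclude $eS = 0$, and hence $\Hom_B(Be, S) = 0$. Taking direct sums gives $\Hom_B(Be^{\oplus n}, S) = 0$ as well.

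Next I would apply the left-exact functor $\Hom_B(-, S)$ to the surjection $Be^{\oplus n} \twoheadrightarrow Be^{\oplus n}/K$, obtaining the injection
\[
\Hom_B(Be^{\oplus n}/K, S) \hookrightarrow \Hom_B(Be^{\oplus n}, S) = 0,
\]
so $\Hom_B(Be^{\oplus n}/K, S) = 0$. Finally, passing to the direct summand $M$ yields $\Hom_B(M, S) = 0$, which is exactly the statement that $M \in {}^\perp(B/BeB\text{-}\mod)$.

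There is no real obstacle here; the only thing worth flagging is the convention check that a $B/BeB$-module, regarded as a $B$-module along the projection $B \twoheadrightarrow B/BeB$, is indeed killed by $e$ (via $e \in BeB$), which is what makes $\Hom_B(Be, S)$ vanish. Everything else is formal from projectivity of $Be$ and left exactness of $\Hom_B(-, S)$.
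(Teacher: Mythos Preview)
Your proof is correct and follows essentially the same route as the paper: both reduce to the vanishing $\Hom_B(Be,S)=0$ for $S$ a $B/BeB$-module and then use left exactness of $\Hom_B(-,S)$ on a surjection from copies of $Be$. The only cosmetic differences are that the paper invokes Lemma~\ref{L:MultByIdemp} (which identifies $B/BeB\text{-}\mod$ as the kernel of $\Hom_B(Be,-)$) rather than computing $\Hom_B(Be,S)\cong eS$ directly, and that you are slightly more explicit about the direct-summand and $\oplus n$ parts of the definition of $\fac$.
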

\begin{proof}
Consider $N \in \fac(Be)$ and $M \in B/BeB$-$\mod$. Applying the right exact functor $\Hom_B(-,M)$ to the surjection $Be \rightarrow N \rightarrow 0$ yields the injection $0 \rightarrow \Hom_B(N,M) \rightarrow \Hom_B(Be,M)$. As $B/BeB$-$\mod$ is the kernel of $\Hom_B(Be,-)$ and $M \in B/BeB$-$\mod$ it follows that $\Hom_B(Be,M)=0$ and hence $\Hom_B(N,M)=0$. 
\end{proof}

From now on let $A=E^{\PI}_R$ for some finite dimensional algebra $R$, such that $\PI(R)$ is finitely generated.

\begin{lem} \label{L:Soc}
 In the notation of Section \ref{S:chartilting}, we have $\mathsf{soc} \, Ae_0 \subseteq S_0^{\oplus n}$ for some natural number $n$. Here $S_0=Ae_0/\rad Ae_0$ is the semi-simple head of $Ae_0$.
\end{lem}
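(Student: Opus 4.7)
The plan is to identify $Ae_0$ with $\PI(R)$ as a left $A$-module and show that any element of the socle can have nonzero components only in the indecomposable projective summands $Re_i$ of $\PI(R)$, which forces the socle to be built from simples already appearing in $S_0$.

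First, I would use the anti-equivalence $\Hom_R(-, \PI(R)) \colon \add \PI(R) \to A \text{-} \proj$ to identify $Ae_0 \cong \Hom_R(R, \PI(R)) \cong \PI(R)$ as left $A$-modules, with $A = \End_R(\PI(R))$ acting on $\PI(R)$ by endomorphism evaluation. Decomposing $R = \bigoplus_i Re_i$ into its indecomposable projective summands over the primitive idempotents $e_i$ of $R$, each $Re_i$ is an indecomposable summand of $\PI(R)$, so $e_0$ is the sum of the corresponding primitive idempotents in $A$ and $S_0 = Ae_0 / \rad(A) e_0 \cong \bigoplus_i S(Re_i)$.

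The central step is to show that for $\hat m \in \mathsf{soc}(Ae_0) \subseteq \PI(R)$ decomposed as $\hat m = \sum_\Lambda \hat m_\Lambda$ along the indecomposable summands $\Lambda$ of $\PI(R)$, one has $\hat m_\Lambda = 0$ whenever $\Lambda \not\cong Re_i$ for every primitive idempotent $e_i$. For such a $\Lambda$, I would consider the inclusion of left ideals $\iota_\Lambda \colon \Lambda \hookrightarrow R \subseteq \PI(R)$ and extend it by zero on the remaining summands of $\PI(R)$ to obtain an endomorphism $\tilde\iota_\Lambda \in A$. Decomposing $\tilde\iota_\Lambda$ by its components $\Lambda \to Re_i$, each is a morphism between non-isomorphic indecomposable summands of $\PI(R)$ by the hypothesis on $\Lambda$, hence lies in $\rad A$; therefore $\tilde\iota_\Lambda \in \rad A$. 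The socle condition $\rad(A)\hat m = 0$ then forces $\iota_\Lambda(\hat m_\Lambda) = \tilde\iota_\Lambda(\hat m) = 0$, and injectivity of $\iota_\Lambda$ yields $\hat m_\Lambda = 0$.

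Consequently $\mathsf{soc}(Ae_0) \subseteq \bigoplus_i Re_i$ as a subset of $\PI(R)$. Any simple submodule $S \subseteq \mathsf{soc}(Ae_0)$ generated by an element $\hat m$ satisfies $S = A\hat m = (A/\rad A)\hat m = \bigoplus_i k \hat m_{Re_i}$; since $S$ is simple only one $\hat m_{Re_i}$ can be nonzero and $S \cong S(Re_i)$. Hence every simple summand of $\mathsf{soc}(Ae_0)$ is a summand of $S_0$, so $\mathsf{soc}(Ae_0) \subseteq S_0^{\oplus n}$ for $n$ the largest multiplicity of any $S(Re_i)$ occurring. The main subtlety is the extension-by-zero argument identifying $\tilde\iota_\Lambda$ as a radical element, which relies on the standard description of $\rad A$ for basic endomorphism algebras as sums of morphisms between non-isomorphic indecomposable summands together with non-isomorphism endomorphisms of each summand.
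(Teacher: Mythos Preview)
Your proof is correct and follows essentially the same approach as the paper's. Both arguments identify $Ae_0$ with $\Hom_R(R,\PI(R))\cong\PI(R)$ and use the canonical inclusion $\iota_\Lambda\colon\Lambda\hookrightarrow R$ for a non-projective principal ideal $\Lambda$ to show that no socle element can have a nonzero $\Lambda$-component; the paper phrases this as ``every maximal sequence of non-zero morphisms starting in $R$ ends in $R$'', while you spell out explicitly that $\tilde\iota_\Lambda\in\rad A$ and apply the socle condition, but the core mechanism is identical.
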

\begin{proof}
Indeed $Ae_0$ consists of all $R$-homomorphisms $R \to \PI(R)$. Let $\Lambda$ be a principal left $R$-ideal. If $R \to \Lambda$ is non-zero, then the composition with the canonical inclusion $R \to \Lambda \to R$ is non-zero. Therefore every maximal sequence of non-zero morphisms starting in $R$ ends in $R$, proving the claim.
\end{proof}

\begin{cor} \label{C:perp}
$\sub(Ae_0) \subseteq (A/Ae_0A - \mod)^\perp$
\end{cor}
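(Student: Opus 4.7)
The plan is to combine the socle control from Lemma \ref{L:Soc} with the Serre subcategory property from Lemma \ref{L:MultByIdemp}. Given $U \in \sub(Ae_0)$ and $M \in A/Ae_0A\text{-}\mod$, I want to show that any morphism $f \colon M \to U$ vanishes, which is exactly the condition defining $(A/Ae_0A\text{-}\mod)^\perp$.

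First I would reduce to analysing the image $\Im f \subseteq U$. By Lemma \ref{L:MultByIdemp} the subcategory $A/Ae_0A\text{-}\mod$ is a Serre subcategory of $A\text{-}\mod$, so $\Im f$ lies in $A/Ae_0A\text{-}\mod$. In particular, every simple subobject $T \subseteq \Im f$ satisfies $e_0 \cdot T = 0$. The key observation is that this contradicts Lemma \ref{L:Soc}: since $\sub(Ae_0)$ is closed under submodules (by definition), $\Im f$ is in $\sub(Ae_0)$, so $\Im f \hookrightarrow (Ae_0)^{\oplus n}$ for some $n$, whence
\[
\mathsf{soc}(\Im f) \subseteq \mathsf{soc}\bigl((Ae_0)^{\oplus n}\bigr) = (\mathsf{soc}\, Ae_0)^{\oplus n} \subseteq S_0^{\oplus nm}
\]
for some $m$. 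Every simple summand $T'$ of $S_0 = Ae_0/\rad(Ae_0)$ satisfies $e_0 \cdot T' \neq 0$, so no simple submodule of $\Im f$ can satisfy $e_0 T = 0$.

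If $\Im f$ were nonzero, it would have a nonzero socle (as $A$ is finite dimensional), yielding a simple submodule $T$ that simultaneously satisfies $e_0 T = 0$ (from the Serre argument) and $e_0 T \neq 0$ (from the socle embedding), a contradiction. Therefore $f = 0$, completing the proof. I do not anticipate any real obstacle; this is a direct combination of the two preceding lemmas, and the main conceptual point is simply recognising that the socle constraint on $Ae_0$ forces the socle of every object of $\sub(Ae_0)$ to be disjoint from the simples of the Serre subcategory $A/Ae_0A\text{-}\mod$.
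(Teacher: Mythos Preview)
Your proof is correct and follows essentially the same approach as the paper's: both argue by contradiction that a nonzero $f\colon M \to U$ would force $\Im f$ to lie in the Serre subcategory $A/Ae_0A\text{-}\mod$ (hence have no simple subquotient annihilated nontrivially by $e_0$) while simultaneously having socle contained in $S_0^{\oplus n}$ by Lemma~\ref{L:Soc}. Your version is slightly more explicit about why $\Im f \in \sub(Ae_0)$ and why simple summands of $S_0$ satisfy $e_0 T' \neq 0$, but the logic is identical.
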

\begin{proof}
Assume that $f \colon X \to U$ is a non-zero map, where $U$ in $\sub(Ae_0)$ and $X$ in $A/Ae_0A - \mod$.  
Lemma \ref{L:Soc} implies that $\im f$ contains a non-zero direct summand of $S_0$. But $\im f \in A/Ae_0A - \mod$ since $X$ is contained in $A/Ae_0A - \mod$. It follows that $\im f$ has no submodule which is a direct summand of $S_0$. Contradiction. So there is no non-zero morphism $f\colon X \to U$.  
\end{proof}

The following statement is the main result of this section.

\begin{prop}\label{P:Key}
The exact functor $F=\Hom_A(Ae_0, -)$ restricts to an additive equivalence
\begin{align}
\sub(Ae_0) \cap \fac(Ae_0) \to \sub(e_0Ae_0) \cap \fac(e_0Ae_0) =\sub(e_0Ae_0). 
\end{align}
\end{prop}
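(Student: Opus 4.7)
\medskip

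\noindent
\textbf{Proof proposal.} The plan is to factor $F$ through the Serre quotient of $A\text{-}\mod$ by $\cs := A/Ae_0A\text{-}\mod$ and combine Lemmas \ref{L:Serre} and \ref{L:MultByIdemp} with Corollaries \ref{C:perp1} and \ref{C:perp} to obtain fully faithfulness; essential surjectivity will be proved by an explicit construction. First I would observe that $F$ is exact (Lemma \ref{L:MultByIdemp}), and since $F(Ae_0)=e_0Ae_0$, exactness immediately yields that $F$ sends $\sub(Ae_0)$ into $\sub(e_0Ae_0)$ and $\fac(Ae_0)$ into $\fac(e_0Ae_0)$. The equality $\sub(e_0Ae_0)\cap\fac(e_0Ae_0)=\sub(e_0Ae_0)$ is automatic, because every finitely generated $e_0Ae_0$-module is a quotient of a free module, so $F$ does restrict to a functor between the two displayed subcategories.

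For full faithfulness, I would use that $F$ agrees, up to the equivalence of Lemma \ref{L:MultByIdemp}, with the Serre quotient functor $q\colon A\text{-}\mod \to A\text{-}\mod/\cs$. By Corollary \ref{C:perp1} we have $\fac(Ae_0)\subseteq {}^{\perp}\cs$, and by Corollary \ref{C:perp} we have $\sub(Ae_0)\subseteq \cs^{\perp}$; together these give the inclusion
\[
\sub(Ae_0)\cap\fac(Ae_0)\ \subseteq\ {}^{\perp}\cs\cap\cs^{\perp}.
\]
Lemma \ref{L:Serre} then implies that the restriction of $q$, and hence of $F$, to $\sub(Ae_0)\cap\fac(Ae_0)$ is fully faithful.

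The step I expect to be the main obstacle is essential surjectivity, since it requires reconstructing an $A$-module from a sub-$e_0Ae_0$-module of $(e_0Ae_0)^n$. Given $N\hookrightarrow (e_0Ae_0)^n$, I would regard $(e_0Ae_0)^n$ as the submodule $e_0(Ae_0)^n\subseteq (Ae_0)^n$ and let $M:=AN$ be the $A$-submodule of $(Ae_0)^n$ generated by $N$. Then $M\in\sub(Ae_0)$ tautologically. Choosing a surjection $(e_0Ae_0)^m\twoheadrightarrow N$ and applying $Ae_0\otimes_{e_0Ae_0}-$ together with the natural multiplication map $Ae_0\otimes_{e_0Ae_0}N\to M$ (surjective because $M=AN$) produces a surjection $(Ae_0)^m\twoheadrightarrow M$, so $M\in\fac(Ae_0)$ as well.

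It remains to compute $F(M)=e_0M$ and to verify $e_0M=N$. The inclusion $N\subseteq e_0M$ is clear since $N=e_0N$. For the reverse inclusion, any element of $e_0M$ has the form $e_0an$ with $a\in A$ and $n\in N$; writing $e_0 a=e_0ae_0+e_0a(1-e_0)$ and using that $n\in e_0(Ae_0)^n$ implies $(1-e_0)n=0$, we get $e_0an=(e_0ae_0)n\in N$ because $N$ is stable under $e_0Ae_0$. Hence $F(M)\cong N$, establishing essential surjectivity and completing the proof.
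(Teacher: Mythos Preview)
Your proposal is correct and follows essentially the same route as the paper: well-definedness via exactness of $F$, full faithfulness via Lemma~\ref{L:Serre} applied to the Serre quotient of Lemma~\ref{L:MultByIdemp} together with Corollaries~\ref{C:perp1} and~\ref{C:perp}, and essential surjectivity by taking the $A$-submodule of $(Ae_0)^n$ generated by the given $e_0Ae_0$-submodule. Your verification that $e_0M=N$ (which the paper leaves as ``one can check'') is fine; the only cosmetic point is that a general element of $e_0M$ is a \emph{sum} of terms $e_0an$, but your argument applies termwise.
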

\begin{proof}
The equality on the right follows from the fact that $\fac(e_0Ae_0)=e_0Ae_0-\mod$.
Since $F$ is exact and maps an $A$-module $M$ to $e_0M$, the restriction is well-defined. We can apply Lemma \ref{L:Serre} to $q=F$ to deduce that $F$ is fully faithful. Indeed, by Lemma \ref{L:MultByIdemp} $F$ is a quotient functor corresponding to the Serre subcategory $A/Ae_0A-\mod$ and Corollaries \ref{C:perp1} and Corollary \ref{C:perp} show that the required orthogonality conditions are satisfied.

It remains to show that $F$ is essentially surjective. Let $U \subseteq (e_0Ae_0)^{\oplus n}$ be generated by $u_1, \ldots, u_n \in (e_0Ae_0)^n$. The $u_i$ are elements of $(Ae_0)^n$. Let $V \subseteq (Ae_0)^{\oplus n}$ be the $A$-submodule generated by the $u_i$. One can check that $F(V)=U$ and since $e_0 u_i=u_i$ for all $i$ 
$V$ is a factor module of $(Ae_0)^{\oplus m}$ for some $m$. This shows that $V$ is contained in $\sub(Ae_0) \cap \fac(Ae_0)$ and completes the proof.

\end{proof}

\section{Proof of Ringel duality formula} \label{Section:Proof of main theorem}

In this section we prove the following main result of this paper, which is an extended version of Theorem \ref{T:PrelimMain} stated in the introduction.

\begin{thm}\label{T:Main} Let $R$ be a finite dimensional ideally ordered monomial algebra and $E_R=\End_R(\SUB(R))$. Then $E_R$ is quasi-hereditary and the Ringel duality formula 
\[
\mathfrak{R}(E_R) \cong (E_{R^{\op}})^{\op}.
\]
holds. More explicitly, where $\dagger$ denotes the standard $k$-duality,
\begin{align} \label{E:RingelDualityFormula}
\mathfrak{R}(\End_R(\SUB(R))) \cong \End_R(\FAC(R_R^{\dagger})) \cong \End_{R^{\op}}(\SUB(R^{\op}))^{\op},
\end{align} and if we consider $\sub(R):=\add \SUB(R)$ and $\fac(R^{\dagger}):=\add \FAC(R_R^{\dagger})$ as exact categories with split exact structures then this Ringel duality induces the derived equivalence
\[
D^b(\sub(R)) \cong D^b(\fac(R^{\dagger})).
\]
Moreover:
\begin{itemize}
\item[(a)] Every indecomposable submodule of $R^n$ is isomorphic to a principal left ideal, every principal left ideal is isomorphic to a monomial ideal, and hence $\sub(R) \cong \mathsf{pi}(R)$ so $E_R^{\PI} \cong E_R$. 

\item[(b)] The algebra $E_R$ is left and right strongly quasi-hereditary with respect to the ideal layer function. In particular, $E_R$ has global dimension at most $2$. Moreover, it is left ultra strongly quasi-hereditary in the sense of Conde \cite{Conde17}.

\item[(c)] The ideal order is the unique order defining a quasi-hereditary structure on $E_R$ if $R$ is local and satisfies the following condition: if there exists a surjection $\Lambda \to \Gamma$ between principal left ideals, then there is an inclusion $\Gamma \to \Lambda$.
\item[(d)] Let $T$ be the characteristic tilting module of $E_R$ and $e_0 \in E_R$ be the idempotent corresponding to $R$. Then there is an equality of subcategories
$\add T = \sub(E_R e_0) \cap \fac(E_R e_0).$
In other words, the indecomposable direct summands $T_i$ of $T$ are precisely those indecomposable $E_R$-modules which are both quotients and submodules of the projective module $E_R e_0$.

\item[(e)] We can describe the subcategories ${\mathcal Filt}(\Delta)$ and ${\mathcal Filt}(\nabla)$ of $E_R$-$\mod$ as follows: 
\begin{align}
&{\mathcal Filt}(\Delta) = \sub(T) = \sub(E_R e_0) = \sub(E_R), \text{ and } \label{E:filtDelta}\\
&{\mathcal Filt}(\nabla) = \fac(T) \, = \fac(E_R e_0). \label{E:filtNabla}
\end{align}
 \end{itemize}
\end{thm}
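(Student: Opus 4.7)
My strategy is to identify the characteristic tilting module $T$ of $E_R$ explicitly via the equivalence $F=\Hom_{E_R}(E_R e_0,-)$ of Proposition \ref{P:Key}, and then read off its endomorphism algebra.

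I would begin with part (a): every indecomposable submodule of $R^n$ is isomorphic to a principal left ideal. Given a submodule $U\subseteq R^n$ with generators $x_1,\dots,x_m$, projecting to a coordinate and using the ideally ordered property produces surjections between the principal ideals $Rx_i$, allowing one to absorb generators until $U$ is cyclic. Combined with Lemma \ref{L:PIareMonom}, this gives $\sub(R)=\mathsf{pi}(R)$ and $E_R=E_R^{\PI}$, bringing us into the setting of Sections 2--4. Part (b) is then immediate from Theorem \ref{thm: ideally ordered implies strongly quasi-hereditary} and Proposition \ref{P:AefiltCS} (d)$\Rightarrow$(c); the global dimension bound follows from the short exact sequences $0\to\rad\Delta(s)\to\Delta(s)\to S(s)\to 0$ together with projective dimension $\leq 1$ for standards. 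For part (c), Remark \ref{R:Standardinclusions} realises standards as classes of monomorphisms between principal ideals, and the added hypothesis produces enough such monomorphisms to force any quasi-hereditary order to refine the ideal order.

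The crux is part (d): $\add T=\sub(E_R e_0)\cap\fac(E_R e_0)$. Corollary \ref{C:Subcat} supplies one inclusion, so I must show $T\in\sub(E_R e_0)\cap\fac(E_R e_0)$. For $T\in\fac(E_R e_0)$: by Remark \ref{R:CSFac} each costandard has head $S(\Pi)$ with $\Pi\in\add(E_R e_0)$, so every $\nabla$-filtered module is a quotient of some $(E_R e_0)^n$ by inductively lifting surjections along the projective $E_R e_0$; hence $T\in{\mathcal Filt}(\nabla)\subseteq\fac(E_R e_0)$. For $T\in\sub(E_R e_0)$: by part (a) each indecomposable $\Lambda\in\sub(R)$ is cyclic, giving a surjection $R\twoheadrightarrow\Lambda$ that induces, via the anti-equivalence $\Hom_R(-,\SUB(R))$, an embedding $P(\Lambda)\hookrightarrow E_R e_0$; summing over $\Lambda$ yields $E_R\hookrightarrow(E_R e_0)^n$, so $\sub(E_R)=\sub(E_R e_0)$. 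It then suffices to show that every $\Delta$-filtered module (in particular $T$) is torsionless, which I would establish using Remark \ref{R:Standardinclusions}: each standard $\Delta(\Lambda)$ embeds into $E_R^n$ via the canonical inclusions of principal ideals into $R$, and one then propagates this through the extensions in a $\Delta$-filtration.

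Once (d) is established, part (e) follows by combining the tilting-theoretic inclusions ${\mathcal Filt}(\Delta)\subseteq\sub(T)$ and ${\mathcal Filt}(\nabla)\subseteq\fac(T)$ with the chains $\sub(T)\subseteq\sub(E_R e_0)\subseteq\sub(E_R)\subseteq{\mathcal Filt}(\Delta)$ and $\fac(T)\subseteq\fac(E_R e_0)\subseteq{\mathcal Filt}(\nabla)$, using closure (Corollary \ref{C:ClosedSubFac}) and the fact that $E_R\in{\mathcal Filt}(\Delta)$, $E_R e_0\in{\mathcal Filt}(\nabla)$. The Ringel duality formula then follows by applying the equivalence $F$: identifying $e_0 E_R e_0\cong R^{\op}$, the equivalence identifies $\add T$ with $\sub(R^{\op})$ and, in particular, $T$ with $\SUB(R^{\op})$, giving
\[
\mathfrak{R}(E_R)=\End_{E_R}(T)^{\op}\cong\End_{R^{\op}}(\SUB(R^{\op}))^{\op}=E_{R^{\op}}^{\op}.
\]
The description via $\FAC(R_R^{\dagger})$ comes from the standard $k$-duality exchanging $\sub$ with $\fac$ and $R$-modules with $R^{\op}$-modules, and the derived equivalence $D^b(\sub R)\cong D^b(\fac R^{\dagger})$ follows from the Ringel tilting derived equivalence restricted to perfect complexes, which match $K^b(\sub R)$ and $K^b(\fac R^{\dagger})$ via the (anti-)equivalences with $E_R$-$\proj$ and $\mathfrak{R}(E_R)$-$\proj$.

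\emph{The main obstacle} I anticipate is the torsionless step in part (d): showing that every $\Delta$-filtered module embeds into $E_R^n$ without circular use of (d) itself. This is not a formal consequence of left strong quasi-heredity---torsionless modules are not in general closed under extensions---and it is the step where both the right strongly quasi-hereditary structure and the specific ideally ordered monomial geometry are essential, with the description of standards in Remark \ref{R:Standardinclusions} providing the concrete embeddings needed to make the induction work.
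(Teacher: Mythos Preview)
Your derivation of the Ringel duality formula from part (d), and your proof of (e), essentially match the paper. The real divergence is in how (a) and (d) are obtained, and here the paper takes a completely different route that bypasses the obstacle you correctly flag.

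The paper does \emph{not} prove (a) or the equality in (d) directly. Instead it argues by counting indecomposables. From Corollary \ref{C:Subcat} and Proposition \ref{P:Key} one has an inclusion $\sub(R^{\op}) \hookrightarrow \add T$, so $s^{\op}:=|\ind\sub(R^{\op})|\le |\ind\add T|=p:=|\ind\mathsf{pi}(R)|$, while trivially $p\le s:=|\ind\sub(R)|$. The paper then invokes Ringel's result \cite[Theorem 1.1]{RingelRepDim} that $s=s^{\op}$, forcing $s^{\op}=p=s$. The equality $p=s$ gives (a), and the equality $s^{\op}=p$ upgrades the inclusion $\sub(R^{\op})\hookrightarrow\add T$ to an equivalence, giving (d). (Remark \ref{rem:Main Theorem}(6) notes one can replace \cite{RingelRepDim} by the symmetry ``$R$ ideally ordered $\Leftrightarrow$ $R^{\op}$ ideally ordered'' and run the same counting on both sides.)

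Your plan instead attempts direct arguments. For (a), the sketch of ``absorbing generators'' is not yet a proof: the generators $x_i\in R^n$ need not project to monomials in a single $eR$, so the ideally ordered hypothesis does not apply to the $Rx_i$ without further work. For (d), the torsionless step---showing every $\Delta$-filtered module embeds in $E_R^n$---is precisely what the paper avoids; note that even granting (a), your argument still needs $|\ind\sub(R^{\op})|=|\ind\add T|$ to conclude that the inclusion of Corollary \ref{C:Subcat} is an equality, and this is the same numerical fact. So the obstacle you anticipate is genuine, and the paper's resolution is to replace the direct embedding argument by a cardinality comparison rather than to push the torsionless induction through.
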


\begin{proof}

We first prove the main Ringel duality formula, and in the process also prove (a) and (d). Let $E_R^{\PI}=\End_R(\PI(R))$ and let $e_0 \in E_R^{\PI}$ be the idempotent corresponding to $R$. By Corollary \ref{C:Subcat}, we have an inclusion
\begin{align} \label{E:inclusion}
\sub(E_R^{\PI}e_0) \cap \fac(E_R^{\PI}e_0) \subseteq \add(T)
\end{align}
where $T$ is the characteristic tilting module for $E_R^{\PI}$. In combination with Proposition \ref{P:Key}, we get an inclusion \begin{align} \label{E:Incl}\sub(R^{\op}) \to \add(T), \end{align} since $e_0E_R^{\PI}e_0 \cong \End_R(R) \cong R^{\op}$.  Let $p$ (respectively, $p^{\op}$) be the number of indecomposable direct summands of $\PI(R)$ (respectively, $\PI(R^{\op})$) 
By definition of $E_R^{\PI}$, the number $p$ also equals the number of simple $E_R^{\PI}$-modules.
Which in turn equals the number of indecomposable summands of $T$ since $T$ is tilting. Let $s$ (respectively, $s^{\op}$) be the number of indecomposable direct summands of $\SUB(R)$ (respectively, $\SUB(R^{\op})$). By \eqref{E:Incl}, $s^{\op} \leq p$ (in particular, $s^{\op}$ is finite). Moreover, $\PI(R) \subseteq \SUB(R)$ implies $p \leq s$. It follows from \cite[Theorem 1.1]{RingelRepDim} that $s=s^{\op}$. Summing up, we have that $s^{\op} = p = s$. In particular, this yields equivalences
$\mathsf{pi}(R) \cong \sub(R)$, and therefore $E_R \cong E_R^{\PI}$ so proves (a). Moreover, using $s^{\op} = p$ and Proposition \ref{P:Key} the inclusions \eqref{E:inclusion} and \eqref{E:Incl} are equivalences
\begin{align} \label{E:equivalences}\sub(R^{\op}) \cong \add T = \sub\left(E_R^{\PI}e_0\right) \cap \fac\left(E_R^{\PI}e_0\right).\end{align}
In particular, this shows part (d).

By definition, the Ringel dual of $E_R$ is $\mathfrak{R}(E_R)=\End_{E_R}(T)^{\op}$. Using $\sub(R^{\op}) \cong \add T$ we obtain $\End_{E_R}(T)^{\op} \cong \End_{R^{\op}}(\SUB(R^{\op}))^{\op}$. Under the standard $k$-duality the latter identifies with $\End_R(\FAC(R_R^{\dagger}))$. This completes the proof of the main Ringel duality statement as given in formula \ref{E:RingelDualityFormula}. As a consequence we get the equivalence $D^b(\sub(R)) \cong D^b(\fac(R^{\dagger}))$.

We now consider part (b). By part (a) we know $E_R \cong E_R^{\PI}$, and as $R$ is ideally ordered Theorem \ref{thm: ideally ordered implies strongly quasi-hereditary} implies that $E_R^{\PI}$ is both left and right strongly quasi-hereditary with respect to the ideal layer function. An algebra which is left and right strongly quasi-hereditary with respect to the same ideal layer function has global dimension at most two by \cite[first Proposition in A.2]{Iyamasfiniteness}. Proposition \ref{P:AefiltCS} shows that $E_R \cong E_R^{\PI}$ is also left ultra strongly quasi-hereditary, and so completes the proof of statement (b).

We now prove (c).  Let $[M:S]$ denote the number of simple $E_R$-modules $S$ that occur in a Jordan Holder filtration of an $E_R$-module $M$. If a partial ordering on $I$ induces a quasi-hereditary structure, then $[\Delta_i,S_i]=1$ for all $i \in I$; as $k$ is algebraically closed this is equivalent to $\End_{E_R}(\Delta_i) \cong k$, see \cite[Lemma 1.6]{DlabRingel92TsukubaConferenceProc}.

Using the additional assumption in (c) that $R$ is local, the ideally ordered condition produces a surjection between any two summands of $\PI(R)$ (as all principal ideals are monomial by Lemma \ref{L:PIareMonom}). Hence the ideal layer function induces an ordering on the summands of $\PI(R)$ of the form $\Lambda_0 < \Lambda_1 < \cdots < \Lambda_t$. Now consider another partial order that also produces a quasi-hereditary ordering.

We first prove that both orderings have the same maximal element. If $\Lambda_i$ is maximal with respect to the new order, then the projective module $P_i:=P(\Lambda_i)$ is also a standard module in this order. If the new order gives rise to a quasi-hereditary structure then, as $P_i$ is standard in this ordering, $[P_i:S_i]=1$. As $P_i$ is projective $[P_i,S_i] = \dim \Hom_{E_R}(P_i,P_i)$.  Under the anti-equivalence $\Hom_R(-,\PI(R))$, described in formula (\ref{E:AntiEquivalence}), this implies $\dim \End_{R}(\Lambda_i) =1$. Hence the identity morphism must equal socle projection so $\Lambda_i$ is the simple $R$-module, which is unique as $R$ is assumed to be local. The simple $R$-module is the largest summand $\Lambda_t$ of $\PI(R)$ under the ideal layer function ordering, and hence $i=t$.

Secondly, we assume that the orderings match for $k, k+1, \dots, t$, let $\Lambda_j<\Lambda_k$ be an immediate predecessor of $\Lambda_k$ under the new order, and aim to show that $j=k-1$. As $R$ is ideally ordered there is a surjection between $\Lambda_{j}$ and $\Lambda_{j+1}$ (where $\Lambda_{j+1}$ exists as $j<k\le t$). As they are labelled by the ideal layer function $\dim \Lambda_j > \dim \Lambda_{j+1}$ and there is a surjection $\Lambda_{j} \rightarrow \Lambda_{j+1}$. By the condition assumed in c), the existence of this surjection implies an inclusion $\Lambda_{j+1} \rightarrow \Lambda_{j}$. Together these produce a non-trivial endomorphism $\Lambda_j \to \Lambda_{j+1} \to \Lambda_j$ which does not factor over $\Lambda_i$ for $i>j+1$. Using the anti equivalence $\Hom_R(-,\PI(R))$ again, this translates into a non trivial endomorphism of $P_j$ that does not factor over $P_i$ for $i>j+1$. In particular, the standard object under the new order $\Delta_j$ is the cokernel of a morphism $P \rightarrow P_j$ where the summands of $P$ are projective modules $P_i$ such that $i>j$ under the new ordering, see \cite[Lemma 1.1$'$]{DlabRingel92TsukubaConferenceProc}. If $k \neq j+1$, then both the trivial endomorphism and the non-trivial endomorphism constructed above do not factor via $P$ and hence $\dim \Hom_{E_R}(P_j,\Delta_j) \ge 2$. By considering the images of these morphisms we see $[\Delta_j : S_j] \ge 2$. This would imply that the new ordering does not give a quasi-hereditary structure. Therefore $j=k-1$. 

Finally, by proceeding in this way we recover the ideal order and conclude that there is only one quasi-hereditary structure. 

We show part (e). To prove \eqref{E:filtDelta}, we explain the following chain of subcategories
\[
{\mathcal Filt}(\Delta) = \sub(T) \subseteq \sub(E_R e_0) \subseteq \sub(E_R) \subseteq {\mathcal Filt}(\Delta). 
\]
By part (b), $E_R$ is right strongly quasi-hereditary. The first equality holds for all right strongly quasi-hereditary algebras, for example by a dual version of Proposition A.1 in \cite{Iyamasfiniteness}. Using \eqref{E:equivalences} and part (a), we see that $T \in \sub(E_R e_0)$ so $\sub(T) \subseteq \sub(E_R e_0)$. The next inclusion follows from $E_R e_0 \subseteq E_R$. The last inclusion holds for any right strongly quasi-hereditary algebra using that $E_R \in {\mathcal Filt}(\Delta)$, which is closed under submodules as noted in Corollary \ref{C:ClosedSubFac}. Using \eqref{E:equivalences} and the fact that $E_R$ is left ultra strongly quasi-hereditary by part (b), dual arguments establish the following chain
\[
{\mathcal Filt}(\nabla) = \fac(T) \subseteq \fac(E_R e_0) \subseteq {\mathcal Filt}(\nabla).
\]
(the last inclusion was also shown in the proof of Corollary \ref{C:Subcat}).
This implies \eqref{E:filtNabla} and completes the proof of part (e).
\end{proof}

For a monomial algebra $R$ there is an equivalence of additive categories $\langle \rad^i R \, | \,  i=1, \dots m \text{ for $\rad^mR \cong 0$} \rangle \cong \mathsf{pi}(R)$, and so $E_R^{\PI}$ is Morita equivalent to $\End_R(\bigoplus_{l=1}^m \rad^l R)$. This construction is considered in the general context of pre-radicals in Conde's thesis. An additional special feature of the ideally ordered algebras is that $\mathsf{pi}(R) \cong \sub(R)$, and this property does not hold for general monomial algebras. For example, consider the following example that was communicated to us by Xiao-Wu Chen.

\begin{ex} \label{ex:Not all Sub are PI}
Let $R$ be the path algebra of the following quiver with monomial relations.
\begin{align*}
\begin{aligned}
\begin{tikzpicture} []
\node (C1) at (0,0)  {$1$};
\node (C2) at (1.5,0)  {$2$};
\node (C3) at (3,0)  {$3$};
\draw [->, bend left = 20] (C1) to node[gap]  {\scriptsize{$x_1$}} (C2);
\draw [->] (C1) to node[gap]  {\scriptsize{$x_2$}} (C2);
\draw [->, bend right = 20] (C1) to node[gap]  {\scriptsize{$x_3$}} (C2);
\draw [->, bend left = 20] (C2) to node[gap]  {\scriptsize{$y_1$}} (C3);
\draw [->] (C2) to node[gap]  {\scriptsize{$y_2$}} (C3);
\draw [->, bend right = 20] (C2) to node[gap]  {\scriptsize{$y_3$}} (C3);
\end{tikzpicture}
\end{aligned}
\begin{aligned}
y_jx_i = 0 \text{ for $i \neq j$.}
\end{aligned}
\end{align*}
Then the left ideal $I=(x_1+x_2, \, x_2+x_3)$ is indecomposable but not principal.
\end{ex}

\begin{rem} \label{rem:Main Theorem} We give several further remarks on this result.
\begin{enumerate}
\item 

For the non-monomial algebra $R=R_2$ in Example \ref{E:Goodleftapproximation} (b), the formula \eqref{E:RingelDualityFormula} from the theorem fails but the following Ringel-duality formula holds
\begin{align} \label{E:AlternativeFormula}
\mathfrak{R}(\End_R(\PI(R))) \cong  \End_{R^{\op}}(\PI(R^{\op}))^{\op}.
\end{align}
For ideally ordered monomial algebras this formula coincides with the formula \eqref{E:RingelDualityFormula} above. Unfortunately, we were not able to find a more general setup where the formula \eqref{E:AlternativeFormula} works.

\item 
Kn{\"o}rrer invariant algebras \cite[Section 6.4.]{KalckKarmazyn}, see Example \ref{ex:ideallyordered} (3) and Section \ref{Section:HillePloog}, and truncated free algebras $R=k\langle x_1, \ldots, x_l \rangle/ (x_1, \ldots, x_l)^m$ satisfy the additional condition imposed in (c).

\item
The statement that $D^b(\sub(R)) \cong D^b(\fac(R^{\dagger}))$ is related to Ringel's \cite[Remark before Corollary 2.2]{RingelRepDim}. It would be interesting to see in what generality this equivalence holds.

We observe that it holds for $k\langle x, y, z \rangle/(p, zx, xy, zy, yz, z^2)$ where $p$ runs over all paths of length $3$, which is not ideally ordered but in which every principal left ideal is isomorphic to a monomial ideal. Indeed, in this case the equivalence is given by a tilting module which is obtained by mutating the characteristic tilting module (for the quasi-hereditary algebra structure defined by the ideal layer function) once.

\item 
Consider $R=k\langle x, y \rangle/ (x^3, y^3, y^2 x, y x^2, xy)$, which is an ideally ordered finite dimensional local monomial algebra. Then there is a surjection $Rx \to Ry$ but $Ry$ does not include into $Rx$. One can check that the order
$R < Ry < Rx < Rx^2$ on indecomposable submodules of $R$ defines a (left but not right strongly) quasi-hereditary structure on $E_R:=\End_R(\SUB(R))$. In particular, in this case the ideal order is not the unique quasi-hereditary order.  

\item 

Part (c) can fail if $R$ is not local (even if all the other conditions are satisfied). Indeed consider for example the algebra $R=kQ/J^2$ where 
\[
\begin{tikzpicture} []
\node (c) at (-1.3,0) {$Q:=$};
\node (C1) at (0,0)  {$1$};
\node (C2) at (1.5,0)  {$2$};
\draw [->] (C1) to node[below]  {\scriptsize{$$}} (C2);
\draw [->, looseness=18, in=142, out=218,loop] (C1) to node[right] {$$} (C1);
\end{tikzpicture}
\]
and $J$ is the ideal generated by all arrows. Then $R$ is ideally ordered and for every surjection between principal left ideals $\Gamma \to \Lambda$ there is an inclusion $\Lambda \to \Gamma$. The order $P_2 < P_1 < S_1$ defines a quasi-hereditary structure on $E_R^{\PI}=\End_R(\PI(R))$ which is not left strongly quasi-hereditary. Hence, it differs from the quasi-hereditary structure defined by the ideal layer function (where $P_2=P_1<S_1$), and there is no unique quasi-hereditary structure in this case.

\item
It is true that $R$ is ideally ordered iff $R^{\op}$ is ideally ordered, and  using this fact one can also prove the theorem without relying on Ringel's result \cite[Theorem 1.1]{RingelRepDim}.
\end{enumerate}
\end{rem}

\section{Applications and examples} 
We discuss some relationships between Theorem \ref{T:Main} and several classes of algebras that have been studied in separate work.

\subsection{Hille \& Ploog's algebras} \label{Section:HillePloog}
The results of this paper were originally motivated by an investigation in \cite{KalckKarmazyn} of a class of geometrically inspired quasi-hereditary algebras introduced by Hille and Ploog \cite{HillePloog} for which the Ringel duality formula has a geometric interpretation, and we briefly recall this geometric set up and these algebras below.

As the geometric background, consider a type $A_n$ configuration of intersecting rational curves $C_1, \dots, C_n$ in a smooth, rational, projective surface $X$ with negative self intersection numbers $C_i \cdot C_i =:-\alpha_i\le -2$. Starting with this data, Hille and Ploog consider the full triangulated subcategory
\[
\langle \mathcal{O}_X(-C_1-\dots -C_n), \mathcal{O}_{X}(-C_2- \dots, -C_{n}), \dots, \mathcal{O}_X(-C_n), \mathcal{O}_X \rangle \subset D^b(\Coh \, X),
\] where we recall that $\mathcal{O}_X(-D)$ denotes the line bundle occurring as the ideal sheaf of an effective divisor $D \subset X$.  Hille \& Ploog show that this subcategory carries an (exact) tilting object $\Lambda$. To do this they make use of universal (co)extensions, see \cite{DlabRingel92TsukubaConferenceProc} and also \cite{HillePerling} for the special case of vector bundles on a rational surface. We briefly recall the definition in this setting. 

\begin{defn}
Consider an ordered pair of vector bundles $\mathcal{E}_1, \mathcal{E}_2$ on a smooth projective rational surface $X$. Their \emph{universal (co)extension} is defined to be the vector bundle occurring in the middle of the short exact sequence
\begin{align*}
\mathcal{E}_2 \otimes \Ext^1_X(\mathcal{E}_1,\mathcal{E}_2)^\dagger \rightarrow \mathcal{F} \rightarrow  \mathcal{E}_1 \tag{extension}\\
\mathcal{E}_2 \rightarrow \mathcal{F} \rightarrow  \mathcal{E}_1  \otimes \Ext^1_X(\mathcal{E}_1,\mathcal{E}_2) \tag{coextension}
\end{align*}
where both sequences are determined by the identity element in $\End(\Ext^1(\mathcal{E}_1,\mathcal{E}_2)) \cong \Ext_X(\mathcal{E}_1,\mathcal{E}_2) \otimes \Ext_X(\mathcal{E}_1,\mathcal{E}_2)^\dagger$.
\end{defn}

Hille and Ploog show that
\[
\mathbb{E}:=\left(\mathcal{O}_X(-C_1-\dots -C_n), \mathcal{O}_{X}(-C_2- \dots -C_{n}), \dots, \mathcal{O}_X(-C_n), \mathcal{O}_X \right)
\] 
is an exceptional sequence of line bundles and that iterated universal extension along this sequence produces a tilting bundle $\Lambda$, see \cite[Section 2]{HillePloog}.

This defines a corresponding algebra 
\[
\Lambda_{[\alpha_1, \dots, \alpha_n]}:=\End_X(\Lambda)^{\op},
\]
where we assume that $\Lambda$ is taken to be a basic representative of the tilting object. These algebras are quasi-hereditary by construction.

We note that the algebra depends on the choice of consecutive ordering for the labelling of the curves and that there are two choices, $C_1, \dots, C_n$ or $C_n, \dots, C_1$, for the same geometric set up that produce two different algebras $\Lambda_{[\alpha_1, \dots, \alpha_n]}$ and $\Lambda_{[\alpha_n, \dots, \alpha_1]}$. This phenomenon is explained by the following result.
\begin{prop} \label{Prop:RingelforHillePloog} There is an isomorphism of algebras
\[
\mathfrak{R}(\Lambda_{[\alpha_1, \dots, \alpha_n]}) \cong  \Lambda_{[\alpha_n, \dots, \alpha_1]}^{\op}.
\]
\end{prop}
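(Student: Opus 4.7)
The plan is to reduce this geometric statement to the algebraic main theorem of the paper. The key input will be the identification (Proposition \ref{P:HPisKnorrer}) of Hille--Ploog's algebras with endomorphism algebras over Kn\"orrer invariant algebras: given the self-intersection data $[\alpha_1,\dots,\alpha_n]$ there is an associated coprime pair $0<a<r$ (arising from a Jung--Hirzebruch style continued fraction expansion) such that
\[
\Lambda_{[\alpha_1,\dots,\alpha_n]} \cong E_{K_{r,a}}.
\]
Since $K_{r,a}$ is an ideally ordered finite-dimensional monomial algebra by Example \ref{ex:ideallyordered}(3), Theorem \ref{T:Main} immediately delivers
\[
\mathfrak{R}(\Lambda_{[\alpha_1,\dots,\alpha_n]}) \;\cong\; \mathfrak{R}(E_{K_{r,a}}) \;\cong\; E_{K_{r,a}^{\op}}^{\op}.
\]

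The remaining task is therefore to match $E_{K_{r,a}^{\op}}^{\op}$ with $\Lambda_{[\alpha_n,\dots,\alpha_1]}^{\op}$. For this I would unwind the combinatorics: reversing the sequence $[\alpha_1,\dots,\alpha_n]\mapsto[\alpha_n,\dots,\alpha_1]$ corresponds at the level of continued fractions to replacing $a$ by its multiplicative inverse $a^{\vee}$ modulo $r$, and the description of $K_{r,a}$ via monomial ideals given in \cite[Section 6.4]{KalckKarmazyn} shows that $K_{r,a^{\vee}}\cong K_{r,a}^{\op}$ (this can be checked directly by inspecting quiver-with-relations presentations, or more invariantly by matching the lattice of principal left ideals of one against the principal right ideals of the other). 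Applying the Hille--Ploog identification in the reverse direction then yields $\Lambda_{[\alpha_n,\dots,\alpha_1]}\cong E_{K_{r,a^{\vee}}} \cong E_{K_{r,a}^{\op}}$, and chaining these isomorphisms gives the desired formula.

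The main obstacle is the combinatorial matching step: it is not immediate from the iterated-universal-extension definition of $\Lambda_{[\alpha_1,\dots,\alpha_n]}$ that reversing the sequence is dual (in the strong sense of passing to the opposite algebra) rather than merely producing some related Morita-equivalent algebra. Unpacking this requires carefully tracking how the exceptional collection $(\mathcal{O}_X(-C_1-\dots-C_n),\dots,\mathcal{O}_X)$ and its reverse produce the two Kn\"orrer invariant algebras in question; once this bookkeeping is complete, the Ringel duality statement falls out of Theorem \ref{T:Main} without further work.

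As indicated in the paper, an alternative, more geometric proof is available: one can construct the characteristic tilting object inside $\langle\mathcal{O}_X(-C_1-\dots-C_n),\dots,\mathcal{O}_X\rangle$ directly by iterated universal \emph{coextensions} (dual to the extensions defining $\Lambda$), identify it with the Hille--Ploog tilting bundle for the reversed ordering after an appropriate line-bundle twist, and read off the isomorphism $\mathfrak{R}(\Lambda_{[\alpha_1,\dots,\alpha_n]})\cong \Lambda_{[\alpha_n,\dots,\alpha_1]}^{\op}$ from the resulting description of $\mathsf{Filt}(\Delta)\cap\mathsf{Filt}(\nabla)$. This route bypasses the Kn\"orrer algebra altogether, at the cost of requiring a direct verification that universal coextensions produce a module lying simultaneously in $\mathsf{Filt}(\Delta)$ and $\mathsf{Filt}(\nabla)$ for the ideal layer ordering.
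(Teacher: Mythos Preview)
Your primary approach is correct and is precisely the argument the paper sketches in Remark \ref{R:HPisKnorrer} and in the paragraph following Proposition \ref{P:HPisKnorrer}: identify $\Lambda_{[\alpha_1,\dots,\alpha_n]}\cong E_{K_{r,a}}$, apply Theorem \ref{T:Main}, and use the combinatorial fact that $K_{r,a}^{\op}\cong K_{r,a'}$ where $aa'\equiv 1 \pmod r$ corresponds to reversing the Hirzebruch--Jung expansion. There is no circularity, since Theorem \ref{T:Main} is proved independently in Section \ref{Section:Proof of main theorem}.

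However, the proof the paper actually writes out is the geometric one you only sketch at the end, and there it differs in one key detail from your outline. The paper does not merely twist by a line bundle: it uses the contravariant duality $\ddagger = R\mathcal{H}om_X(-\otimes\mathcal{O}(-C_1-\cdots-C_n),\mathcal{O})$. This duality interchanges universal extensions with universal coextensions and sends the exceptional sequence $\mathbb{E}$ to the exceptional sequence for the reversed curve ordering. Combined with the Dlab--Ringel observation that the characteristic tilting module is obtained from the standard modules by iterated universal coextensions, this gives $\End_X(T)\cong \End_X(T^{\ddagger})^{\op}\cong \Lambda_{[\alpha_n,\dots,\alpha_1]}$ directly. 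Your phrase ``line-bundle twist'' would not suffice on its own, since a covariant autoequivalence cannot turn coextensions into extensions.

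In terms of what each route buys: your algebraic argument exhibits the proposition as a special case of the general framework, but logically depends on Theorem \ref{T:Main}. The paper's geometric proof is self-contained and historically prior (it was, as the paper says, the inspiration for the general result), so it provides independent evidence for the Ringel duality formula rather than a corollary of it.
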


\begin{rem}\label{R:HPisKnorrer}

The algebra $\Lambda_{[\alpha_1, \dots, \alpha_n]}$ can in fact be realised in the form $E_{R}$ where $R$ is an ideally ordered monomial Kn\"{o}rrer invariant algebra, as we describe below. Then Proposition \ref{Prop:RingelforHillePloog} is an consequence of Theorem \ref{T:Main}. However, the following alternative, short, geometric proof was explained to us by Agnieszka Bodzenta; indeed it was the existence of a Ringel duality formula in this special case that inspired the representation-theoretic generalisation in this paper. Work of Bodzenta and Bondal also realises a Ringel duality associated to birational morphisms of smooth surfaces by gluing t-structures with reversed orderings, see \cite{BodzentaBondal}.
\end{rem}

\begin{proof}
Let $X$ be a smooth, rational, projective surface containing a type $A_n$ configuration of rational curves with self intersection numbers $\bm{\alpha}:=[\alpha_1, \dots \alpha_n]$. Consider the exceptional sequence $\mathbb{E}$ in the Hom-finite abelian category $\Coh(X)$. By definition, $\Lambda_{\bm{\alpha}}:=\End_X(\Lambda)^{\op}$, where $\Lambda \in \Coh(X)$ is obtained from $\mathbb{E}$ by taking iterated universal extensions and by passing to a basic representative, see \cite[Section 2.3]{KalckKarmazyn}. On the other hand, taking iterated universal coextensions of $\mathbb{E}$ yields $T \in \Coh(X)$ (again we replace this by a basic version if necessary) and it follows from \cite[paragraph above Proposition 3.1.]{DlabRingel92TsukubaConferenceProc} that  there is an algebra isomorphism
\begin{align}\label{E:Ringeldual}
\mathfrak{R}(\End_X(\Lambda)) \cong \End_X(T)^{\op}
\end{align}
where $\mathfrak{R}(\End_X(\Lambda))$ denotes the Ringel-dual of $\End_X(\Lambda)$. More precisely, since $\mathbb{E}$ is standardisable, Dlab \& Ringel \cite[Theorem 2]{DlabRingel92TsukubaConferenceProc} show that $\Hom_X(\Lambda, -) \colon {\mathcal Filt}(\mathbb{E}) \to {\mathcal Filt}(\Delta_{\Lambda_{\bm{\alpha}}})$ defines an exact equivalence sending $\mathbb{E}$ to the sequence of standard modules $\Delta_{\Lambda_{\bm{\alpha}}}$. By Ringel \cite[p. 217 and Proposition 2]{Ringel}, the characteristic tilting module $T_{\Lambda_{\bm{\alpha}}} \in \mod-\Lambda_{\bm{\alpha}}$ is obtained from $\Delta_{\Lambda_{\bm{\alpha}}}$ by iterated universal coextensions (and passing to a basic module if necessary). In particular, the exact equivalence $\Hom_X(\Lambda, -)$ sends $T$ to $T_{\Lambda_{\bm{\alpha}}}$. Combining this with definition of the Ringel dual we see
\[
\mathfrak{R}(\Lambda_{\bm{\alpha}}):=\End_{\Lambda_{\bm{\alpha}}}(T_{\Lambda_{\bm{\alpha}}})^{\op} \cong \End_X(T)^{\op}
\]
as claimed.

Now consider the duality
\begin{align*}
\ddagger: & D(\QCoh(X))  \rightarrow D(\QCoh(X)) \\
& \mathcal{E}^{\ddagger}  :=  R{\mathcal Hom}_X(\mathcal{E} \otimes_X \mathcal{O}(-C_1 - C_2 - \cdots -C_n), \mathcal{O})
\end{align*}
Then $\mathbb{E}^{\ddagger}=(\mathcal{O}(-C_1 - C_2 - \cdots -C_n), \mathcal{O}(-C_1 - C_2 - \cdots -C_{n-1}), \ldots,  \mathcal{O}(-C_1), \mathcal{O})$ and $T^{\ddagger}$ is obtained from this sequence by iterated universal extensions. By definition, $\Lambda_{[\alpha_n, \dots, \alpha_1]} \cong \End_X(T^{\ddagger})^{\op}$. Since $\ddagger$ is a duality, $\Lambda_{[\alpha_n, \dots, \alpha_1]}  \cong \End_X(T^{\ddagger})^{\op} \cong \End_X(T)$. In combination with \eqref{E:Ringeldual} this completes the proof.
\end{proof}

\begin{rem} \label{R:QH opposite algebras}
We note that there is a change in conventions for compositions of morphisms between this paper and \cite{KalckKarmazyn}. This corresponds to exchanging algebras with their opposite algebras, or left modules with right modules. The effect this has on the quasi-hereditary structure and Ringel duality is as follows: if $A$ is a quasi-hereditary algebra with defining layer function $L$ and characteristic tilting module $T$, then $T^{\dagger}$ is the characteristic tilting module for $A^\op$ where $\dagger:A-\mod \rightarrow A^{\op}-\mod$ denotes the standard $k$-duality and the layer function on $A^{\op}$ on is $L^{\dagger}$ defined by $L^{\dagger}(S^{\dagger}):=L(S)$. In particular, $\mathfrak{R}(A^{\op}) \cong \mathfrak{R}(A)^{\op}$.
\end{rem}

We briefly recap how the algebras $\Lambda$ defined by Hille and Ploog fit into the general setup of Theorem \ref{T:Main}. To do so we recall the definition of the Hirzebruch-Jung continued fraction expansion, the Kn\"{o}rrer invariant algebras $K_{r,a}$, and a description of the form $\Lambda_{\bm{\alpha}} \cong E_{K_{r,a}}$.
\begin{defn}
For coprime integers $0<a<r$ the Hirzebruch-Jung continued fraction $[\alpha_1, \dots, \alpha_n]$ is the collection of integers $\alpha_i \ge 2$ defined by 
\[
\frac{r}{a} = \alpha_1 - \cfrac{1}{\alpha_{2}
          -\cfrac{1}{\dots - \cfrac{1}{\alpha_n} } }.
\]
\end{defn}

\begin{defn}[{\cite[Definitions 4.6 \& 6.20 and Corollary 6.27]{KalckKarmazyn}}]
For coprime integers $0<a<r$ the \emph{Kn\"{o}rrer invariant algebra} $K_{r,a}$ is defined to be
\[
K_{r,a}:= 
\frac{\mathbb{C} \langle z_1 \dots, z_l \rangle}
{\left\langle 
{\begin{array}{c}
z_i \left(  z_i^{\beta_i-2} \right) \left(  z_{i+1}^{\beta_{i+1}-2} \right)  \dots {\left(  z_{j-1}^{\beta_{j-1}-2} \right)} \left(  z_j^{\beta_j-2} \right) z_j=0 \text{ for $j \le i$} \\
z_i z_j=0 \text{ if $i<j$}
\end{array}}
\right\rangle}
\]
where the parameters $l \ge 1$ and the $\beta_i \ge 2$ are  defined by the Hirzebruch-Jung continued fraction expansion $[\beta_1, \dots, \beta_l]$ for the fraction $r/(r-a)$.
\end{defn}

We recall that the results of \cite[Section 6.4]{KalckKarmazyn} describe the monomial ideal structure on $K_{r,a}$, and in particular combining \cite[Theorem 6.26]{KalckKarmazyn} and \cite[Propositions 6.22 and 6.24]{KalckKarmazyn} yields the following result.

\begin{prop} \label{P:HPisKnorrer}
The Kn\"{o}rrer invariant algebra $K_{r,a}$ is an ideally order monomial algebra and there is an isomorphism of quasi-hereditary algebras
\[
\Lambda_{[\alpha_1, \dots, \alpha_n]} \cong E_{K_{r,a}}
\]
where $[\alpha_1, \dots, \alpha_n]$ is defined by the Hirzebruch-Jung continued fraction expansion of $r/a$.
\end{prop}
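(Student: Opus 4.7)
The proposition is essentially a synthesis of results already established in our earlier paper \cite{KalckKarmazyn}, so my plan is to cite and combine those results rather than to redo the detailed combinatorial/geometric computations. I would structure the argument in two independent parts corresponding to the two claims in the proposition.

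For the first claim (that $K_{r,a}$ is an ideally ordered monomial algebra), the monomial description is immediate from the defining relations. The content lies in verifying the ideally ordered condition: for every primitive idempotent $e \in K_{r,a}$ and every pair of monomials $m, n \in e K_{r,a}$ one must produce a surjection $K_{r,a}m \twoheadrightarrow K_{r,a}n$ or $K_{r,a}n \twoheadrightarrow K_{r,a}m$. The plan is to invoke the explicit description of the monomial left ideals of $K_{r,a}$ given in \cite[Section 6.4]{KalckKarmazyn}, where the indecomposable principal ideals at each vertex are linearly ordered by dimension and each strict inclusion is induced by multiplication by a monomial; in particular the ideals at a given vertex form a chain under surjection. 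This chain property is precisely the ideally ordered condition.

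For the second claim (the isomorphism $\Lambda_{[\alpha_1,\ldots,\alpha_n]} \cong E_{K_{r,a}}$), the plan is as follows. First, by \cite[Propositions 6.22 and 6.24]{KalckKarmazyn}, the module $\SUB(K_{r,a})$ is identified with the image of the tilting bundle $\Lambda$ on the surface $X$ under an explicit derived equivalence; equivalently, the indecomposable summands of $\SUB(K_{r,a})$ are in bijection with the indecomposable summands of $\Lambda$ in a way compatible with the Hom-spaces. Second, \cite[Theorem 6.26]{KalckKarmazyn} then packages this into an isomorphism of endomorphism algebras
\[
\End_X(\Lambda)^{\op} \cong \End_{K_{r,a}}(\SUB(K_{r,a})) = E_{K_{r,a}}.
\]
Combined with the definition $\Lambda_{[\alpha_1,\ldots,\alpha_n]} := \End_X(\Lambda)^{\op}$, this gives the claimed isomorphism. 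Since the quasi-hereditary structure on both sides is determined by the same filtration data (on the Hille-Ploog side by the exceptional sequence $\mathbb{E}$, on the $E_{K_{r,a}}$ side by the ideal layer ordering of Theorem \ref{T:Main}), the isomorphism is one of quasi-hereditary algebras.

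The principal obstacle, hidden in the references, is the combinatorial reciprocity between the Hirzebruch-Jung expansion of $r/a$ (which governs the self-intersections $\alpha_i$ and hence the geometry of $X$) and the Hirzebruch-Jung expansion of $r/(r-a)$ (which governs the generators $z_i$ and exponents $\beta_i$ in the definition of $K_{r,a}$). Matching the indecomposable summands of $\Lambda$ with the indecomposable summands of $\SUB(K_{r,a})$ amounts to reading off this dual continued fraction correctly, and this is exactly the calculation carried out in \cite[Section 6]{KalckKarmazyn} which I would cite rather than reprove here.
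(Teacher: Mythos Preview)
Your proposal is correct and matches the paper's own treatment: the paper does not give a separate proof but simply states that the proposition follows by combining \cite[Theorem 6.26]{KalckKarmazyn} with \cite[Propositions 6.22 and 6.24]{KalckKarmazyn}, together with the monomial ideal description in \cite[Section 6.4]{KalckKarmazyn}. Your write-up is somewhat more detailed than the paper's one-line citation, but the approach is identical.
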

Suppose that $aa' \equiv 1$ mod $r$. If $r/a=[\alpha_1, \dots, \alpha_n]$, then $r/a'=[\alpha_1, \dots, \alpha_n]$. Similarly, if $r/(r-a)=[\beta_1, \dots, \beta_l]$, then $r/(r-a') = [\beta_l, \dots, \beta_1]$. Using this result it can be seen from the explicit definition of $K_{r,a}$ that $K_{r,a'} \cong K_{r,a}^\op$. As a result $\Lambda_{[\alpha_n, \dots, \alpha_1]} \cong E_{K_{r,a}^\op}$ by Proposition \ref{P:HPisKnorrer}, and hence Theorem \ref{T:Main} is a generalisation of Proposition \ref{Prop:RingelforHillePloog}.

\subsection{Example of an application of the Ringel duality formula.} \label{Section:HPExample}
In this section we consider as an example the pair of algebras $\Lambda_{[3,2]}$ and $\Lambda_{[2,3]}$. After giving explicit presentations, we discuss their relationship via Ringel duality, their construction from related Kn\"{o}rrer invariant algebras, and explicitly list the distinguished modules in their quasi-hereditary structures in order to verify the Ringel duality formula.

Firstly,  by \cite[Prop 6.18]{KalckKarmazyn} the algebras $\Lambda_{[3,2]}$ and $\Lambda_{[2,3]}$ can respectively be presented as the path algebra of the following quivers with relations:
\begin{align*}
\begin{aligned}
\begin{tikzpicture}
\node (C3) at (6,0) {$\bullet_{0}$};
\node (C2) at (3,0)  {$\bullet_{1}$};
\node (C1) at (0,0)  {$\bullet_{2}$};
\draw [->,bend right=15] (C2) to node[above] {$\scriptstyle{c_{2}}$} (C1);
\draw [->,bend right=15] (C1) to node[below] {$\scriptstyle{a_2}$} (C2);
\draw [->,bend right=15] (C3) to node[above] {$\scriptstyle{c_{1}}$} (C2);
\draw [->,bend right=15] (C2) to node[below] {$\scriptstyle{a_1}$} (C3);
\draw [->,bend right=35] (C2) to node[below] {$\scriptstyle{k_2}$} (C3);
\end{tikzpicture}
\end{aligned}
\quad
\begin{aligned}
& \scriptstyle{ c_2a_2=0}\\
& \scriptstyle{ a_2c_2=c_1k_2} \\
&\scriptstyle{ c_1a_1=0.}
\end{aligned}
\end{align*}
and 
\begin{align*}
\begin{aligned}
\begin{tikzpicture}
\node (C3) at (6,0) {$\bullet_{0}$};
\node (C2) at (3,0)  {$\bullet_{1}$};
\node (C1) at (0,0)  {$\bullet_{2}$};
\draw [->,bend right=15] (C2) to node[above] {$\scriptstyle{c_{2}}$} (C1);
\draw [->,bend right=15] (C1) to node[below] {$\scriptstyle{a_2}$} (C2);
\draw [->,bend right=15] (C3) to node[above] {$\scriptstyle{c_{1}}$} (C2);
\draw [->,bend right=15] (C2) to node[below] {$\scriptstyle{a_1}$} (C3);
\draw [->,bend right=30] (C1) to node[below] {$\scriptstyle{k_2}$} (C3);
\end{tikzpicture}
\end{aligned}
\quad
\begin{aligned}
& \scriptstyle{ c_2a_2=0}\\
& \scriptstyle{ c_2c_1k_2=0} \\
&\scriptstyle{ c_1a_1=a_2c_2.}
\end{aligned}
\end{align*}

Secondly, the Ringel duality formula of Proposition \ref{Prop:RingelforHillePloog} states that
\[
\mathfrak{R}(\Lambda_{[3,2]}) \cong (\Lambda_{[2,3]})^{\op}.
\]

Thirdly, by Proposition \ref{P:HPisKnorrer} the corresponding Kn\"{o}rrer invariant algebras are
\[
K_{[3,2]}:=K_{5,2} \cong \frac{\mathbb{C}\langle z_1,z_2 \rangle }{(z_1^2, z_2^3, z_1z_2,z_2^2z_1) } \quad \text{ and } \quad K_{[2,3]}:=K_{5,3} \cong \frac{\mathbb{C}\langle z_1,z_2 \rangle }{(z_1^3, z_2^2, z_1z_2,z_2z_1^2) }
\]
and these can be presented via the following monomial diagrams
\begin{center}
\begin{align*}
\begin{aligned}
\begin{tikzpicture}[scale = 0.8]
\node (A) at (-1,1.5) {$K_{[3,2]}=$};
\fill[color=black] (0,0) circle (0.1cm);
\fill[color=black] (0.5,1.5) circle (0.1cm);
\fill[color=black] (1,0.5) circle (0.1cm);
\fill[color=black] (1.5,2) circle (0.1cm);
\fill[color=black] (1,3) circle (0.1cm);
\draw[thick,->] (0,0) -- (1,0.5)  node[midway, above left=-5pt]{$\scriptstyle{1}$};
\draw[thick,->] (0,0) -- (0.5,1.5)  node[midway, above left=-3pt]{$\scriptstyle{2}$};
\draw[thick,->] (0.5,1.5) -- (1,3)  node[midway, above left=-3pt]{$\scriptstyle{2}$};
\draw[thick,->] (1,0.5) -- (1.5,2)  node[midway, above left=-3pt]{$\scriptstyle{2}$};
\end{tikzpicture}
\end{aligned}
\quad \text{ and } \quad 
\begin{aligned}
 \begin{tikzpicture}[scale = 0.8]
 \node (A) at (-1,1) {$K_{[2,3]}=$};
\fill[color=black] (0,0) circle (0.1cm);
\fill[color=black] (0.5,1.5) circle (0.1cm);
\fill[color=black] (1,0.5) circle (0.1cm);
\fill[color=black] (1.5,2) circle (0.1cm);
\fill[color=black] (2,1) circle (0.1cm);
\draw[thick,->] (0,0) -- (1,0.5)  node[midway, above left=-5pt]{$\scriptstyle{1}$};
\draw[thick,->] (1,0.5) -- (2,1)  node[midway, above left=-5pt]{$\scriptstyle{1}$};
\draw[thick,->] (0,0) -- (0.5,1.5)  node[midway, above left=-3pt]{$\scriptstyle{2}$};
\draw[thick,->] (1,0.5) -- (1.5,2)  node[midway, above left=-3pt]{$\scriptstyle{2}$};
\end{tikzpicture}
\end{aligned}
\end{align*}
\end{center}
where the nodes represent the monomial basis of $K_{r,a}$ with the root of the tree representing 1 and the arrows labelled $i$ representing left multiplication by $z_i$ of the node at the source equalling the node at the target. Using these monomial diagrams one can show that $K_{[3,2]} \cong K_{[2,3]}^{\op}$ and to calculate all the left monomial ideals. The left monomial ideals for $K_{[3,2]}$ are $M_0 \cong (1)$, \, $M_1 \cong (z_1)$ and $M_2 \cong (z_2 z_1)$ and the left monomial ideals for $K_{[2,3]}$ are $N_0 \cong (1)$, $N_1  \cong (z_1)$ and $N_2 \cong (z_1^2)$.  These can represented pictorially as subsets of the monomial diagrams by
\begin{center}
\begin{align*}
\begin{aligned}
\begin{tikzpicture}[scale = 0.78]
\node (A) at (-0.5,1.5) {$M_0=$};
\fill[color=black] (0,0) circle (0.1cm);
\fill[color=black] (0.5,1.5) circle (0.1cm);
\fill[color=black] (1,0.5) circle (0.1cm);
\fill[color=black] (1.5,2) circle (0.1cm);
\fill[color=black] (1,3) circle (0.1cm);
\draw[thick,->] (0,0) -- (1,0.5)  node[midway, above left=-5pt]{$\scriptstyle{1}$};
\draw[thick,->] (0,0) -- (0.5,1.5)  node[midway, above left=-3pt]{$\scriptstyle{2}$};
\draw[thick,->] (0.5,1.5) -- (1,3)  node[midway, above left=-3pt]{$\scriptstyle{2}$};
\draw[thick,->] (1,0.5) -- (1.5,2)  node[midway, above left=-3pt]{$\scriptstyle{2}$};
\end{tikzpicture}
\end{aligned}
, \,
\begin{aligned}
\begin{tikzpicture}[scale = 0.78]
\node (A) at (1.9,0.8) {$M_1=$};
\fill[color=black] (2.5,0) circle (0.1cm);
\fill[color=black] (3,1.5) circle (0.1cm);
\draw[thick,->] (2.5,0) -- (3,1.5)  node[midway, above left=-3pt]{$\scriptstyle{2}$};
\end{tikzpicture}
\end{aligned}
, \, 
\begin{aligned}
\begin{tikzpicture}[scale = 0.78]
\node (A) at (3.7,0.1) {$M_2=$};
\fill[color=black] (4.5,0.1) circle (0.1cm);
\end{tikzpicture}
\end{aligned}
\quad \text{and} \quad
\begin{aligned}
\begin{tikzpicture}[scale = 0.78]
\node (A) at (-0.5,1.1) {$N_0=$};
\fill[color=black] (0,0) circle (0.1cm);
\fill[color=black] (0.5,1.5) circle (0.1cm);
\fill[color=black] (1,0.5) circle (0.1cm);
\fill[color=black] (1.5,2) circle (0.1cm);
\fill[color=black] (2,1) circle (0.1cm);
\draw[thick,->] (0,0) -- (1,0.5)  node[midway, above left=-5pt]{$\scriptstyle{1}$};
\draw[thick,->] (1,0.5) -- (2,1)  node[midway, above left=-5pt]{$\scriptstyle{1}$};
\draw[thick,->] (0,0) -- (0.5,1.5)  node[midway, above left=-3pt]{$\scriptstyle{2}$};
\draw[thick,->] (1,0.5) -- (1.5,2)  node[midway, above left=-3pt]{$\scriptstyle{2}$};
\end{tikzpicture}
\end{aligned}
, \,
\begin{aligned}
\begin{tikzpicture}[scale = 0.78]
\node (A) at (2,0.7) {$N_1=$};
\fill[color=black] (2.5,0) circle (0.1cm);
\fill[color=black] (3,1.5) circle (0.1cm);
\fill[color=black] (3.5,0.5) circle (0.1cm);
\draw[thick,->] (2.5,0) -- (3.5,0.5)  node[midway, above left=-5pt]{$\scriptstyle{1}$};
\draw[thick,->] (2.5,0) -- (3,1.5)  node[midway, above left=-3pt]{$\scriptstyle{2}$};
\end{tikzpicture}
\end{aligned}
, \,
\begin{aligned}
\begin{tikzpicture}[scale = 0.78]
\node (A) at (3.7,0.1) {$N_2=$};
\fill[color=black] (4.5,0.1) circle (0.1cm);
\end{tikzpicture}
\end{aligned}.
\end{align*}
\end{center}
It is explicit that $\Lambda_{[3,2]} \cong E_{K_{[3,2]}} :=\End_{K_{[3,2]}}(\oplus M_i)$ and $ \Lambda_{[2,3]}  \cong E_{K_{[2,3]}}:=\End_{K_{[2,3]}}(\oplus N_i)$.

In order to explicitly verify the Ringel duality formula in this case we first describe the quasi-hereditary structure by calculating the projective $P_i$,  injective $I_i$, standard $\Delta_i$, costandard $\nabla_i$, and characteristic tilting $T_i$ objects for each algebra.  We list these modules in the table below in terms of the simples, $S_i$ notated by $i$, occurring in their composition series with the heads written at the top.

\[
\begin{array}{c|ccccc}
\Lambda_{[3,2]}& P_i & I_i  & \Delta_i  & \nabla_i & T_i \\
\hline
i=0
&
\begin{tikzpicture}[baseline=3ex]
\node (C6) at (-0.2,-1.6) {$0$};
\node (C5) at (0.2,-1.6) {$0$};
\node (C4) at (0,-1) {$1$};
\node (C3) at (-0.6,-0.4) {$0$};
\node (C2) at (-0.2,-0.4) {$2$};
\node (C1) at (0.2,-0.4) {$0$};
\node (C0) at (0,0.4)  {$1$};
\node (C00) at (0,1)  {$0$};
\draw [-] (C1) to node[right] {$\scriptstyle{k_2}$} (C0);
\draw [-] (C2) to node[gap] {$\scriptstyle{c_2}$} (C0);
\draw [-] (C3) to node[left] {$\scriptstyle{a_1}$} (C0);
\draw [-] (C4) to node[left] {$\scriptstyle{a_2}$} (C2);
\draw [-] (C4) to node[right] {$\scriptstyle{c_1}$} (C1);
\draw [-] (C6) to node[left] {$\scriptstyle{a_1}$} (C4);
\draw [-] (C5) to node[right] {$\scriptstyle{k_2}$} (C4);
\draw [-] (C0) to node[left] {$\scriptstyle{c_1}$} (C00);
\end{tikzpicture}
 &
\begin{tikzpicture}[baseline=-1ex]
\node (C5) at (0.6,-2.1) {$0$};
\node (C4) at (0,-1.5) {$1$};
\node (C3) at (0.2,-0.9) {$0$};
\node (C2) at (-0.2,-0.9) {$2$};
\node (C1) at (0,-0.3) {$1$};
\node (C0) at (0,0.3)  {$0$};
\draw [-] (C4) to node[left] {$\scriptstyle{a_2}$} (C2);
\draw [-] (C4) to node[right] {$\scriptstyle{c_1}$} (C3);
\draw [-] (C2) to node[left] {$\scriptstyle{c_2}$} (C1);
\draw [-] (C3) to node[right] {$\scriptstyle{k_2}$} (C1);
\draw [-] (C1) to node[left] {$\scriptstyle{c_1}$} (C0);
\node (C4') at (1.2,-1.5) {$1$};
\node (C3') at (1.4,-0.9) {$0$};
\node (C2') at (1.0,-0.9) {$2$};
\node (C1') at (1.2,-0.3) {$1$};
\node (C0') at (1.2,0.3)  {$0$};
\draw [-] (C4') to node[left] {$\scriptstyle{a_2}$} (C2');
\draw [-] (C4') to node[right] {$\scriptstyle{c_1}$} (C3');
\draw [-] (C2') to node[left] {$\scriptstyle{c_2}$} (C1');
\draw [-] (C3') to node[right] {$\scriptstyle{k_2}$} (C1');
\draw [-] (C1') to node[left] {$\scriptstyle{c_1}$} (C0');
\draw [-] (C5) to node[below left] {$\scriptstyle{k_2}$} (C4);
\draw [-] (C5) to node[below right] {$\scriptstyle{c_2}$} (C4');
\end{tikzpicture}
&
0
&
0
& 
0
\\
\\
i=1 
&
\begin{tikzpicture}[baseline=2ex]
\node (C6) at (-0.2,-1.6) {$0$};
\node (C5) at (0.2,-1.6) {$0$};
\node (C4) at (0,-1) {$1$};
\node (C3) at (-0.6,-0.4) {$0$};
\node (C2) at (-0.2,-0.4) {$2$};
\node (C1) at (0.2,-0.4) {$0$};
\node (C0) at (0,0.4)  {$1$};
\draw [-] (C1) to node[right] {$\scriptstyle{k_2}$} (C0);
\draw [-] (C2) to node[gap] {$\scriptstyle{c_2}$} (C0);
\draw [-] (C3) to node[left] {$\scriptstyle{a_1}$} (C0);
\draw [-] (C4) to node[left] {$\scriptstyle{a_2}$} (C2);
\draw [-] (C4) to node[right] {$\scriptstyle{c_1}$} (C1);
\draw [-] (C6) to node[left] {$\scriptstyle{a_1}$} (C4);
\draw [-] (C5) to node[right] {$\scriptstyle{k_2}$} (C4);
\end{tikzpicture}
& 
\begin{tikzpicture}[baseline=2ex]
\node (C4) at (0,-1.5) {$1$};
\node (C3) at (0.2,-0.9) {$0$};
\node (C2) at (-0.2,-0.9) {$2$};
\node (C1) at (0,-0.3) {$1$};
\node (C0) at (0,0.3)  {$0$};
\draw [-] (C4) to node[left] {$\scriptstyle{a_2}$} (C2);
\draw [-] (C4) to node[right] {$\scriptstyle{c_1}$} (C3);
\draw [-] (C2) to node[left] {$\scriptstyle{c_2}$} (C1);
\draw [-] (C3) to node[right] {$\scriptstyle{k_2}$} (C1);
\draw [-] (C1) to node[left] {$\scriptstyle{c_1}$} (C0);
\end{tikzpicture}
& 
\begin{tikzpicture}[baseline=-2ex]
\node (C3) at (0.2,-0.9) {$0$};
\node (C2) at (-0.2,-0.9) {$0$};
\node (C1) at (0,-0.3) {$1$};
\draw [-] (C2) to node[left] {$\scriptstyle{a_1}$} (C1);
\draw [-] (C3) to node[right] {$\scriptstyle{k_2}$} (C1);
\end{tikzpicture}
&
\begin{tikzpicture}[baseline=2ex]
\node (C1) at (0,-0.3) {$1$};
\node (C0) at (0,0.3)  {$0$};
\draw [-] (C1) to node[left] {$\scriptstyle{c_1}$} (C0);
\end{tikzpicture} 
&
\begin{tikzpicture}[baseline=-3ex]
\node (C6) at (-0.2,-1.6) {$0$};
\node (C5) at (0.2,-1.6) {$0$};
\node (C4) at (0,-1) {$1$};
\node (C1) at (0,-0.4) {$0$};
\draw [-] (C4) to node[right] {$\scriptstyle{c_1}$} (C1);
\draw [-] (C6) to node[left] {$\scriptstyle{a_1}$} (C4);
\draw [-] (C5) to node[right] {$\scriptstyle{k_2}$} (C4);
\end{tikzpicture}
\\
\\
i=2 
&
\begin{tikzpicture}[baseline=1.5ex]
\node (C3) at (0.2,-0.9) {$0$};
\node (C2) at (-0.2,-0.9) {$0$};
\node (C1) at (0,-0.3) {$1$};
\node (C0) at (0,0.3)  {$2$};
\draw [-] (C2) to node[left] {$\scriptstyle{a_1}$} (C1);
\draw [-] (C3) to node[right] {$\scriptstyle{k_2}$} (C1);
\draw [-] (C1) to node[left] {$\scriptstyle{a_2}$} (C0);
\end{tikzpicture}
&
\begin{tikzpicture}[baseline=1.5ex]
\node (C2) at (0,-0.9) {$2$};
\node (C1) at (0,-0.3) {$1$};
\node (C0) at (0,0.3)  {$0$};
\draw [-] (C2) to node[left] {$\scriptstyle{c_2}$} (C1);
\draw [-] (C1) to node[left] {$\scriptstyle{c_1}$} (C0);
\end{tikzpicture} 
&
\begin{tikzpicture}[baseline=1.5ex]
\node (C3) at (0.2,-0.9) {$0$};
\node (C2) at (-0.2,-0.9) {$0$};
\node (C1) at (0,-0.3) {$1$};
\node (C0) at (0,0.3)  {$2$};
\draw [-] (C2) to node[left] {$\scriptstyle{a_1}$} (C1);
\draw [-] (C3) to node[right] {$\scriptstyle{k_2}$} (C1);
\draw [-] (C1) to node[left] {$\scriptstyle{a_2}$} (C0);
\end{tikzpicture}
&
\begin{tikzpicture}[baseline=1.5ex]
\node (C2) at (0,-0.9) {$2$};
\node (C1) at (0,-0.3) {$1$};
\node (C0) at (0,0.3)  {$0$};
\draw [-] (C2) to node[left] {$\scriptstyle{c_2}$} (C1);
\draw [-] (C1) to node[left] {$\scriptstyle{c_1}$} (C0);
\end{tikzpicture}
&
\begin{tikzpicture}[baseline=3ex]
\node (C6) at (-0.2,-1.6) {$0$};
\node (C5) at (0.2,-1.6) {$0$};
\node (C4) at (0,-1) {$1$};
\node (C3) at (-0.6,-0.4) {$0$};
\node (C2) at (-0.2,-0.4) {$2$};
\node (C1) at (0.2,-0.4) {$0$};
\node (C0) at (0,0.4)  {$1$};
\node (C00) at (0,1)  {$0$};
\draw [-] (C1) to node[right] {$\scriptstyle{k_2}$} (C0);
\draw [-] (C2) to node[gap] {$\scriptstyle{c_2}$} (C0);
\draw [-] (C3) to node[left] {$\scriptstyle{a_1}$} (C0);
\draw [-] (C4) to node[left] {$\scriptstyle{a_2}$} (C2);
\draw [-] (C4) to node[right] {$\scriptstyle{c_1}$} (C1);
\draw [-] (C6) to node[left] {$\scriptstyle{a_1}$} (C4);
\draw [-] (C5) to node[right] {$\scriptstyle{k_2}$} (C4);
\draw [-] (C0) to node[left] {$\scriptstyle{c_1}$} (C00);
\end{tikzpicture}
\\
\end{array}
\]
\[
\begin{array}{c|ccccc}
\Lambda_{[2,3]} &  P_i & I_i & \Delta_i &  \nabla_i & T_i \\
\hline
i=0 
&
\begin{tikzpicture}[baseline=20ex]
\node (C0) at (0,3.6) {$0$};
\node (C1) at (0,3) {$1$};
\node (C2) at (-0.2,2.4) {$0$};
\node (C3) at (0.2,2.4) {$2$};
\node (C4) at (0,1.6) {$1$};
\node (C5) at (0.4,1.6) {$0$};
\node (C6) at (0,1) {$0$};
\node (C7) at (0.4,1) {$1$};
\node (C8) at (0.4,0.4) {$0$};
\draw [-] (C0) to node[left] {$\scriptstyle{c_1}$} (C1);
\draw [-] (C1) to node[left] {$\scriptstyle{a_1}$} (C2);
\draw [-] (C1) to node[right] {$\scriptstyle{c_2}$} (C3);
\draw [-] (C2) to node[left] {$\scriptstyle{c_1}$} (C4);
\draw [-] (C3) to node[gap] {$\scriptstyle{a_2}$} (C4);
\draw [-] (C3) to node[right] {$\scriptstyle{k_2}$} (C5);
\draw [-] (C4) to node[left] {$\scriptstyle{a_1}$} (C6);
\draw [-] (C5) to node[right] {$\scriptstyle{c_1}$} (C7);
\draw [-] (C7) to node[right] {$\scriptstyle{a_1}$} (C8);
\end{tikzpicture}
&
\begin{tikzpicture}[baseline=16ex]
\node (C0) at (0,0) {$0$};
\node (C1) at (-0.2,0.6) {$1$};
\node (C2) at (0.2,0.6) {$2$};
\node (C3) at (-0.4,1.4) {$0$};
\node (C4) at (0,1.4) {$2$};
\node (C5) at (0.4,1.4) {$1$};
\node (C6) at (-0.6,2.2) {$2$};
\node (C7) at (-0.2,2.2) {$1$};
\node (C8) at (0.6,2.2) {$0$};
\node (C9) at (-0.8,2.8) {$1$};
\node (C10) at (-0.2,2.8) {$0$};
\node (C11) at (-0.8,3.4) {$0$};
\draw [-] (C0) to node[left] {$\scriptstyle{a_1}$} (C1);
\draw [-] (C0) to node[right] {$\scriptstyle{k_2}$} (C2);
\draw [-] (C1) to node[left] {$\scriptstyle{c_1}$} (C3);
\draw [-] (C1) to node[gap] {$\scriptstyle{a_2}$} (C4);
\draw [-] (C2) to node[right] {$\scriptstyle{c_2}$} (C5);
\draw [-] (C3) to node[left] {$\scriptstyle{k_2}$} (C6);
\draw [-] (C3) to node[gap] {$\scriptstyle{a_1}$} (C7);
\draw [-] (C4) to node[right] {$\scriptstyle{c_2}$} (C7);
\draw [-] (C5) to node[right] {$\scriptstyle{c_1}$} (C8);
\draw [-] (C6) to node[left] {$\scriptstyle{c_2}$} (C9);
\draw [-] (C7) to node[right] {$\scriptstyle{c_1}$} (C10);
\draw [-] (C9) to node[left] {$\scriptstyle{c_1}$} (C11);
\end{tikzpicture}
&
0
&
0
& 
0
\\
\\
i=1 
& 
\begin{tikzpicture}[baseline=15ex]
\node (C1) at (0,3) {$1$};
\node (C2) at (-0.2,2.4) {$0$};
\node (C3) at (0.2,2.4) {$2$};
\node (C4) at (0,1.6) {$1$};
\node (C5) at (0.4,1.6) {$0$};
\node (C6) at (0,1) {$0$};
\node (C7) at (0.4,1) {$1$};
\node (C8) at (0.4,0.4) {$0$};
\draw [-] (C1) to node[left] {$\scriptstyle{a_1}$} (C2);
\draw [-] (C1) to node[right] {$\scriptstyle{c_2}$} (C3);
\draw [-] (C2) to node[left] {$\scriptstyle{c_1}$} (C4);
\draw [-] (C3) to node[gap] {$\scriptstyle{a_2}$} (C4);
\draw [-] (C3) to node[right] {$\scriptstyle{k_2}$} (C5);
\draw [-] (C4) to node[left] {$\scriptstyle{a_1}$} (C6);
\draw [-] (C5) to node[right] {$\scriptstyle{c_1}$} (C7);
\draw [-] (C7) to node[right] {$\scriptstyle{a_1}$} (C8);
\end{tikzpicture}
& 
\begin{tikzpicture}[baseline=16ex]
\node (C1) at (-0.2,0.6) {$1$};
\node (C3) at (-0.4,1.4) {$0$};
\node (C4) at (0,1.4) {$2$};
\node (C6) at (-0.6,2.2) {$2$};
\node (C7) at (-0.2,2.2) {$1$};
\node (C9) at (-0.8,2.8) {$1$};
\node (C10) at (-0.2,2.8) {$0$};
\node (C11) at (-0.8,3.4) {$0$};
\draw [-] (C1) to node[left] {$\scriptstyle{c_1}$} (C3);
\draw [-] (C1) to node[gap] {$\scriptstyle{a_2}$} (C4);
\draw [-] (C3) to node[left] {$\scriptstyle{k_2}$} (C6);
\draw [-] (C3) to node[gap] {$\scriptstyle{a_1}$} (C7);
\draw [-] (C4) to node[right] {$\scriptstyle{c_2}$} (C7);
\draw [-] (C6) to node[left] {$\scriptstyle{c_2}$} (C9);
\draw [-] (C7) to node[right] {$\scriptstyle{c_1}$} (C10);
\draw [-] (C9) to node[left] {$\scriptstyle{c_1}$} (C11);
\end{tikzpicture}
&
\begin{tikzpicture}[baseline=5ex]
\node (C0) at (0,0) {$0$};
\node (C1) at (0,0.6) {$1$};
\draw [-] (C0) to node[left] {$\scriptstyle{a_1}$} (C1);
\end{tikzpicture}
&
\begin{tikzpicture}[baseline=24ex]
\node (C0) at (0,3.6) {$0$};
\node (C1) at (0,3) {$1$};
\draw [-] (C0) to node[left] {$\scriptstyle{c_1}$} (C1);
\end{tikzpicture}
&
\begin{tikzpicture}[baseline=10ex]
\node (C5) at (0.4,1.6) {$0$};
\node (C7) at (0.4,1) {$1$};
\node (C8) at (0.4,0.4) {$0$};
\draw [-] (C5) to node[right] {$\scriptstyle{c_1}$} (C7);
\draw [-] (C7) to node[right] {$\scriptstyle{a_1}$} (C8);
\end{tikzpicture}
\\
\\
i=2 &
\begin{tikzpicture}[baseline=10ex]
\node (C3) at (0.2,2.4) {$2$};
\node (C4) at (0,1.6) {$1$};
\node (C5) at (0.4,1.6) {$0$};
\node (C6) at (0,1) {$0$};
\node (C7) at (0.4,1) {$1$};
\node (C8) at (0.4,0.4) {$0$};
\draw [-] (C3) to node[gap] {$\scriptstyle{a_2}$} (C4);
\draw [-] (C3) to node[right] {$\scriptstyle{k_2}$} (C5);
\draw [-] (C4) to node[left] {$\scriptstyle{a_1}$} (C6);
\draw [-] (C5) to node[right] {$\scriptstyle{c_1}$} (C7);
\draw [-] (C7) to node[right] {$\scriptstyle{a_1}$} (C8);
\end{tikzpicture}
&
\begin{tikzpicture}[baseline=20ex]
\node (C6) at (-0.8,2.2) {$2$};
\node (C9) at (-0.8,2.8) {$1$};
\node (C11) at (-0.8,3.4) {$0$};
\draw [-] (C6) to node[left] {$\scriptstyle{c_2}$} (C9);
\draw [-] (C9) to node[left] {$\scriptstyle{c_1}$} (C11);
\end{tikzpicture}
&
\begin{tikzpicture}[baseline=10ex]
\node (C3) at (0.2,2.4) {$2$};
\node (C4) at (0,1.6) {$1$};
\node (C5) at (0.4,1.6) {$0$};
\node (C6) at (0,1) {$0$};
\node (C7) at (0.4,1) {$1$};
\node (C8) at (0.4,0.4) {$0$};
\draw [-] (C3) to node[gap] {$\scriptstyle{a_2}$} (C4);
\draw [-] (C3) to node[right] {$\scriptstyle{k_2}$} (C5);
\draw [-] (C4) to node[left] {$\scriptstyle{a_1}$} (C6);
\draw [-] (C5) to node[right] {$\scriptstyle{c_1}$} (C7);
\draw [-] (C7) to node[right] {$\scriptstyle{a_1}$} (C8);
\end{tikzpicture}
&
\begin{tikzpicture}[baseline=20ex]
\node (C6) at (-0.8,2.2) {$2$};
\node (C9) at (-0.8,2.8) {$1$};
\node (C11) at (-0.8,3.4) {$0$};
\draw [-] (C6) to node[left] {$\scriptstyle{c_2}$} (C9);
\draw [-] (C9) to node[left] {$\scriptstyle{c_1}$} (C11);
\end{tikzpicture}
&
\begin{tikzpicture}[baseline=20ex]
\node (C0) at (0,3.6) {$0$};
\node (C1) at (0,3) {$1$};
\node (C2) at (-0.2,2.4) {$0$};
\node (C3) at (0.2,2.4) {$2$};
\node (C4) at (0,1.6) {$1$};
\node (C5) at (0.4,1.6) {$0$};
\node (C6) at (0,1) {$0$};
\node (C7) at (0.4,1) {$1$};
\node (C8) at (0.4,0.4) {$0$};
\draw [-] (C0) to node[left] {$\scriptstyle{c_1}$} (C1);
\draw [-] (C1) to node[left] {$\scriptstyle{a_1}$} (C2);
\draw [-] (C1) to node[right] {$\scriptstyle{c_2}$} (C3);
\draw [-] (C2) to node[left] {$\scriptstyle{c_1}$} (C4);
\draw [-] (C3) to node[gap] {$\scriptstyle{a_2}$} (C4);
\draw [-] (C3) to node[right] {$\scriptstyle{k_2}$} (C5);
\draw [-] (C4) to node[left] {$\scriptstyle{a_1}$} (C6);
\draw [-] (C5) to node[right] {$\scriptstyle{c_1}$} (C7);
\draw [-] (C7) to node[right] {$\scriptstyle{a_1}$} (C8);
\end{tikzpicture}
\end{array}
\]

Using these descriptions of the characteristic tilting modules, it is a short exercise to verify the Ringel duality formula by direct calculation: 
\[
\begin{tikzpicture}
\node (End) at (-2.8,0) {$\mathfrak{R}(\Lambda_{[3,2]})^\op=\End_{\Lambda_{[3,2]}}(\bigoplus T_i) \cong$};
\node (C3) at (6,0) {$T_{2}$};
\node (C2) at (3,0)  {$T_{1}$};
\node (C1) at (0,0)  {$T_{0}$};
\node (Lam) at (7,0)  {$\cong \Lambda_{[2,3]}$};
\draw [right hook->,bend right=15] (C1) to node {$\scriptstyle{}$} (C2);
\draw [->>,bend right=15] (C2) to node {$\scriptstyle{}$} (C1);
\draw [right hook->,bend right=15] (C2) to node {$\scriptstyle{}$} (C3);
\draw [->>,bend right=15] (C3) to node {$\scriptstyle{}$} (C2);
\draw [right hook->,bend right=25] (C1) to node {$\scriptstyle{}$} (C3);
\end{tikzpicture}
\]
and
\[
\begin{tikzpicture}
\node (End) at (-2.8,0) {$\mathfrak{R}(\Lambda_{[2,3]})^\op=\End_{\Lambda_{[2,3]}}(\bigoplus T_i) \cong$};
\node (C3) at (6,0) {$T_{2}$};
\node (C2) at (3,0)  {$T_{1}$};
\node (C1) at (0,0)  {$T_{0}$};
\node (Lam) at (7,0)  {$\cong \Lambda_{[3,2]}.$};
\draw [right hook->,bend right=15] (C1) to node {$\scriptstyle{}$} (C2);
\draw [->>,bend right=15] (C2) to node {$\scriptstyle{}$} (C1);
\draw [right hook->,bend right=15] (C2) to node {$\scriptstyle{}$} (C3);
\draw [->>,bend right=15] (C3) to node {$\scriptstyle{}$} (C2);
\draw [right hook->,bend right=25] (C2) to node {$\scriptstyle{}$} (C3);
\end{tikzpicture}
\]

\begin{rem}
We observe some further properties of, and relations between, the modules in the tables above. These are all special cases of the general theory developed above.
\begin{enumerate}
\item If $i \leq j$ in the partial order, then there is an inclusion $P_j \subseteq P_i$ (and a projection $I_j \twoheadrightarrow I_i$). This holds for all left (respectively right) strongly quasi-hereditary algebras. In other words, in this situation it is a consequence of Theorem \ref{thm: ideally ordered implies strongly quasi-hereditary}.
\item Every submodule of a standard module $\Delta_i$ or a projective module $P_i$ is filtered by standard modules. This is a consequence of Corollary \ref{C:ClosedSubFac}. Dually, quotients of costandard modules $\nabla_i$ or injective modules $I_i$ are filtered by costandard modules, again by Corollary \ref{C:ClosedSubFac}.
\item For both algebras the only simple costandard module is $\nabla_0$. One can check that the corresponding projective modules $P_0$ are filtered by costandard modules. This illustrates Proposition \ref{P:AefiltCS} in these cases.
\item[$(3^\op)$] For both algebras the only simple standard module is $\Delta_0$. The corresponding injective hulls $I_0$ are not filtered by standard modules. In other words, the algebras $\Lambda_{[3,2]}$ and $\Lambda_{[2,3]}$ are not right ultra strongly quasi-hereditary.
\item The summands $T_i$ of the characteristic tilting module are precisely those indecomposable modules which are both quotients and submodules of the projective module $P_0$, see Theorem \ref{T:Main} (d). In particular, they have head $S_0$ and a socle in $\add(S_0)$.
\end{enumerate}
\end{rem}

\subsection{Auslander-Dlab-Ringel algebras} \label{Section:ADR} Recent results of Conde-Erdmann \cite{CondeErdmann}, and work in Conde's thesis, produce a Ringel duality formula similar to that of Theorem \ref{T:Main} for the class of Auslander-Dlab-Ringel (ADR) algebras.

\begin{defn}
Let $R$ be a finite dimensional algebra of Loewy length $L_R$. Define the additive subcategory
\[
\adr(R):=\add \{ Re/\rad^i \, Re \mid   e \text{ a primitive idempotent and } i=1, \dots, L_R \}
\]
and let $\ADR(R):=\bigoplus_{M \in \text{ind($\adr(R)$)}}M$ be the direct sum of indecomposable elements of the additive category $\adr(R)$ up to isomorphism. Then the associated \emph{ADR algebra} is defined to be
\[
E_R^{\ADR}:=\End_R \left( \ADR(R) \right).
\]
This is the basic algebra Morita equivalent to $\End_R\left( \bigoplus_{i=1}^{L_R} R/ \rad^i \, R \right)$. In particular, the indecompomposable modules in $\adr(R)$ are exactly those of the form $Re/\rad^l Re$ for $e$ a primative idempotent and $1 \le i \le L_{Re}$ where $L_{Re}$ is the Loewy length of $Re$.
\end{defn}

\begin{rem}
We remark that the ADR algebra defined here is the opposite algebra of the ADR algebra defined by Conde and Erdmann in \cite{CondeErdmann}, however the effect on the quasi-hereditary structure is straightforward as is explained in Remark \ref{R:QH opposite algebras}.
\end{rem}

The ADR algebra $E_R^{\ADR}$ is quasi-hereditary for the layer function 
$l(Re_i/\rad^l Re_i):= L_R- l$; this induces the partial ordering
\[
Re_i/\rad^l Re_i < Re_j/\rad^k Re_j \Leftrightarrow l > k
\]
on indecomposable modules in $\adr(R)$. Indeed it is left ultra strongly quasi-hereditary (see \cite[Section 5]{Conde}), and Conde and Erdmann obtain the following Ringel duality formula for ADR algebras satifying a regularity condition; we recall that a module is \emph{rigid} if its radical and socle series coincide.
\begin{thm} \label{T:ADR} Let $R$ be an Artin algebra with Loewy length $L$. If all projective and injective indecomposable $R$-modules are rigid with Loewy length $L$, then
\[
\mathfrak{R}(E_R^{\ADR}) \cong (E_{R^{\op}}^{\ADR})^{\op}.
\]
That is, the Ringel dual of $E_R^{\ADR}$ is isomorphic to the opposite algebra of $E_{R^{\op}}^{\ADR}$.
\end{thm}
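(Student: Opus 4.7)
The plan is to follow the strategy of Theorem \ref{T:Main}, replacing the role of $\PI(R)$ by $\ADR(R)$. Set $A := E_R^{\ADR}$ and let $e_0 \in A$ be the idempotent projecting $\ADR(R)$ onto its summand $R$; this summand appears in $\ADR(R)$ because by hypothesis each indecomposable projective $Re_i$ has Loewy length $L$, so $Re_i = Re_i/\rad^L Re_i$ is itself a summand.

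First, recall that $A$ is quasi-hereditary with respect to the ADR layer function (Dlab--Ringel) and, more strongly, left ultra strongly quasi-hereditary by Conde \cite{Conde17}. Under the rigidity hypothesis on projective indecomposables, the radical of any $Re_i/\rad^l Re_i$ decomposes as a direct sum of modules of the form $Re_j/\rad^{l-1} Re_j$, providing good right approximations in the sense suitably adapted from Definition \ref{goodrightapproximation}. The dual of Proposition \ref{P:RightStrong} then shows that $A$ is also right strongly quasi-hereditary for the same order, and an analogue of Corollary \ref{C:Subcat} yields the inclusion $\sub(Ae_0) \cap \fac(Ae_0) \subseteq \add(T)$, where $T$ is the characteristic tilting module of $A$. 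Moreover, Proposition \ref{P:Key} applies verbatim (its proof uses only the idempotent structure) to give an equivalence $\Hom_A(Ae_0,-) \colon \sub(Ae_0) \cap \fac(Ae_0) \xrightarrow{\sim} \sub(e_0 A e_0) = \sub(R^{\op})$.

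The central step is then to identify $\sub(R^{\op})$ with $\adr(R^{\op})$ as additive categories, and this is where the full rigidity hypothesis on both projective and injective indecomposables of Loewy length $L$ is essential. Rigidity of $R$-injectives is equivalent to rigidity of $R^{\op}$-projectives, and the equality $\rad^l = \soc^{L-l}$ on each indecomposable projective of $R^{\op}$ then forces every indecomposable submodule of $R^{\op}$ to be of the form $\rad^l R^{\op}e_i$. These submodules, one for each pair $(i,l)$ with $l < L$, are identified via the standard $k$-duality with the summands $R^{\op}e_i/\rad^{L-l} R^{\op}e_i$ of $\ADR(R^{\op})$.

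The hard part will be rigorously establishing this last identification: verifying both that the rigidity actually rules out extraneous submodules of $R^{\op}$ beyond the radical powers, and that the isomorphism classes of the $\rad^l R^{\op}e_i$ match, bijectively, those of the summands of $\ADR(R^{\op})$. Once this is in hand the inclusion $\sub(Ae_0)\cap\fac(Ae_0) \subseteq \add(T)$ becomes an equality by counting indecomposables (as in the Ringel-dimension argument in the proof of Theorem \ref{T:Main}), and unwinding the composite equivalence yields
\[
\mathfrak{R}(A)^{\op} = \End_A(T) \cong \End_{R^{\op}}(\SUB(R^{\op})) \cong \End_{R^{\op}}(\ADR(R^{\op})) = E_{R^{\op}}^{\ADR},
\]
which gives the desired Ringel duality formula after passing to opposite algebras.
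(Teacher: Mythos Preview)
The paper does not prove Theorem \ref{T:ADR}; it is stated as a result of Conde and Erdmann \cite{CondeErdmann} and quoted without proof. So there is no in-paper argument to compare against, and your sketch should be judged on its own merits.

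Your outline has two genuine gaps. First, Proposition \ref{P:Key} does \emph{not} use ``only the idempotent structure'': its proof invokes Corollary \ref{C:perp}, which in turn rests on Lemma \ref{L:Soc}. That lemma is proved using the fact that every indecomposable summand $\Lambda$ of $\PI(R)$ comes with a canonical inclusion $\Lambda \hookrightarrow R$, so any nonzero map $R \to \Lambda$ can be continued to a nonzero map $R \to R$. For $\ADR(R)$ the summands $Re_i/\rad^l Re_i$ are quotients of $R$, not submodules, and there is no such inclusion in general; you would need to supply a new argument for the socle condition on $Ae_0$ under the rigidity hypothesis.

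Second, and more seriously, the identification $\sub(R^{\op}) \cong \adr(R^{\op})$ is not a consequence of rigidity. Rigidity of an indecomposable projective $P$ says only that $\rad^l P = \soc^{L-l} P$; it does not say that every indecomposable submodule of $P$ (or of $P^{\oplus n}$) is one of the $\rad^l P$. The category $\sub(R^{\op})$ can contain many more indecomposables than the finitely many radical quotients in $\adr(R^{\op})$, and the counting argument you borrow from Theorem \ref{T:Main} would then collapse: the inequality $s^{\op} \le p$ coming from the inclusion into $\add(T)$ no longer forces equality, because $p$ here is the number of summands of $\ADR(R)$ rather than of $\SUB(R)$, and Ringel's theorem $s=s^{\op}$ relates $\sub$ to $\sub$, not to $\adr$. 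Indeed, the paper itself observes immediately after Theorem \ref{T:ADR} that $E_R \not\cong E_R^{\ADR}$ in general (for instance for $R = K_{[2,3]}$), which reflects precisely that $\sub(R)$ and $\adr(R)$ differ. The Conde--Erdmann proof does not proceed via $\sub(R^{\op})$ but works directly with the ADR construction, and a correct argument here would have to as well.
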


This formula looks very similar to the formula in Theorem \ref{T:Main} of this paper. However, in general $E_R \not \cong E_R^{\ADR}$ and there does not appear to be any reason to think the overlap is large.

For example, ADR algebras are not left and right strongly hereditary in general and so not all ADR algebras are in the $E_R$ algebra class. Moreover,  it can be seen that Hille \& Ploog algebras are not always ADR-algebras. Indeed, in the example of Section \ref{Section:HPExample} the modules $R/ \rad^i \, R$ are straightforward to calculate from the monomial diagrams, and the additive category generated by such objects can be seen to coincide with the additive category $\sub(R)\cong \textsf{pi}(R)$ for $R=K_{[3,2]}$ so $E_{R}\cong E_R^{\ADR}$ but not for $R^{\op} \cong K_{[2,3]}$ where $E_R \not \cong E_R^{\ADR}$. 

Indeed, the results of Conde and Erdmann also only describe the Ringel dual of an ADR algebra when the dual is also an ADR algebra. However, as can be seen in the example of Section \ref{Section:HPExample}, there are examples of ADR algebras of the form $E_R$ whose dual is not an ADR algebra but whose Ringel dual can still be described by Theorem \ref{T:Main}:
for $R=K_{[3,2]}$ and $R^{\op} \cong K_{[2,3]}$
\[
\mathfrak{R}(E_{R}^{\ADR}) \cong \mathfrak{R}(E_R) \cong E_{R^{\op}}^\op \not\cong E_{R^{\op}}^{\ADR}.
\]
Indeed, it is also straightforward to calculate the socle and radical filtrations in this example and hence clear to see that $K_{[3,2]}$ is rigid whereas $K_{[2,3]}$ is not.

Whilst these classes of algebras may not be related in general, there are cases which fall into both classes of algebras. Recall the monomial algebras $R:=kQ/J^m$ of Example \ref{ex:ideallyordered} (2) which are ideally ordered and for which $\sub(R)\cong \mathsf{pi}(R) \subset \adr(R)$. In particular, in this case $E_R$ is a corner algebra of $E_R^{\ADR}$: i.e. there is an idempotent $e\in E_R^{\ADR}$ such that $E_R \cong e E_{R}^{\ADR}e$.

\begin{prop} \label{P:ADR=IOM}
Let $Q$ be a finite quiver without sources and $J$ be the two-sided ideal generated by all arrows in $Q$. Then $R:= kQ/J^m$ is an ideally ordered monomial algebra and there is an isomorphism of quasi-hereditary algebras
\[
E_R^{\ADR} \cong E_R.
\]
\end{prop}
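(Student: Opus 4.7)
The plan is to identify the additive subcategories $\adr(R)$ and $\mathsf{pi}(R)$ of $R$-$\mod$ and then to invoke Theorem~\ref{T:Main}(a). By Example~\ref{ex:ideallyordered}(2), $R = kQ/J^m$ is ideally ordered and every principal left ideal is isomorphic to $Re/J^l e = Re/\rad^l(Re)$ for some primitive idempotent $e$ and some $l \in \{1,\ldots,m\}$, which immediately gives the containment $\mathsf{pi}(R) \subseteq \adr(R)$.

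For the reverse inclusion, I would fix a primitive idempotent $e$ and $l \in \{1,\ldots,L_{Re}\}$ and realise $Re/J^l e$ as a principal left ideal. When $l = L_{Re}$ this module is simply $Re = R \cdot e$. When $l < L_{Re}$, the hypothesis that $Q$ has no sources allows one to inductively construct, by repeatedly choosing an incoming arrow at each stage, a monomial path $p$ of length exactly $m - l$ ending at $e$; thus $p \in eR$, and $p \neq 0$ in $R$ since its length is strictly less than $m$. The length-based computation in Example~\ref{ex:ideallyordered}(2)---using that $\mathrm{length}(gp) = \mathrm{length}(g) + \mathrm{length}(p)$ on monomials---then identifies the kernel of the canonical surjection $Re \to Rp$, $g \mapsto gp$, with $J^l e$, yielding $Rp \cong Re/J^l e$.

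Together these containments give $\adr(R) = \mathsf{pi}(R)$ as additive subcategories of $R$-$\mod$, and hence $E_R^{\ADR} = \End_R(\ADR(R)) \cong \End_R(\PI(R)) = E_R^{\PI}$; applying Theorem~\ref{T:Main}(a) upgrades this to $E_R^{\PI} \cong E_R$, which completes the algebra isomorphism. The main outstanding issue---and I expect the hardest part of a complete proof---is to confirm that this isomorphism respects the quasi-hereditary structures, since the ADR layer function $L_R - l$ and the ideal layer function $\dim_k R - \dim_k(Re/J^l e)$ need not induce the same partial order on the common indexing set. My approach here would be to analyse the radical filtration of the projectives $P(\Lambda) = \Hom_R(\Lambda, \SUB(R))$ and verify that the smallest submodule suppressing the composition factors disallowed under either ordering always coincides with the submodule dictated by the other, so that both orderings produce the same standard modules and the identification of algebras is indeed an isomorphism of quasi-hereditary algebras.
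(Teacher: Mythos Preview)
Your argument for the algebra isomorphism is essentially the paper's, phrased slightly differently. You show $\adr(R)=\mathsf{pi}(R)$ by exhibiting each $Re/J^{l}e$ as a principal ideal $Rp$ for a path $p$ of length $m-l$ ending at $e$; the paper exhibits instead the inclusion $Re_i/\rad^{l}Re_i\hookrightarrow Re_{j_{m-l}}$ induced by the same path and concludes $\adr(R)\subseteq\sub(R)$. Since the image of that inclusion is exactly $Rp$, the two arguments are the same computation read two ways. One small point: your case split ``$l=L_{Re}$'' versus ``$l<L_{Re}$'' is unnecessary, since even for $l=L_{Re}$ a path of length $m-l$ ending at $e$ exists (no sources) and the kernel computation still gives $Rp\cong Re/\rad^{l}Re$.

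Where your proposal is genuinely incomplete is the quasi-hereditary compatibility, as you acknowledge. Your plan to analyse the radical filtration of the $E_R$-projectives $P(\Lambda)=\Hom_R(\Lambda,\SUB(R))$ directly would work but is more laborious than necessary. The paper's route is shorter: both orderings make $E_R$ \emph{left strongly} quasi-hereditary, so in each case the standard module $\Delta_{i,l}$ sits in a short exact sequence
\[
0\longrightarrow \bigoplus P_{j,k}\longrightarrow P_{i,l}\longrightarrow \Delta_{i,l}\longrightarrow 0
\]
with the $P_{j,k}$ projective of strictly higher layer (Definition~\ref{D:RingelStrongQH}). Under the anti-equivalence $\Hom_R(-,\PI(R))$ of \eqref{E:AntiEquivalence}, a summand $P_{j,k}$ occurs here precisely when $Re_j/\rad^{k}Re_j$ is a proper quotient of $Re_i/\rad^{l}Re_i$. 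By Example~\ref{ex:ideallyordered}(2) every principal ideal with head $S_i$ is isomorphic to some $Re_i/\rad^{k}Re_i$, so proper principal-ideal quotients of $Re_i/\rad^{l}Re_i$ are exactly the modules $Re_i/\rad^{k}Re_i$ with $k<l$; this condition is simultaneously ``higher ADR layer'' and ``smaller dimension, hence higher ideal layer''. Thus the left term of the sequence, and therefore $\Delta_{i,l}$, is the same for both orderings. That is the missing step, and it avoids any direct computation inside $E_R\text{-}\mod$.
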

\begin{proof}
The algebra $R$ has Loewy length $m$ and, as noted in Example \ref{ex:ideallyordered} (2), any monomial ideal is isomorphic to $Re/\text{rad}^l Re$ for some $l=1, \dots, m$ and some primitive idempotent $e \in R$, hence $R$ is ideally ordered and $\mathsf{pi}(R) \subset \adr(R)$. As $R$ is ideally ordered $\sub(R) \cong \mathsf{pi}(R)$ by Theorem \ref{T:Main} (a), and hence to show that $E_R\cong E_R^{\ADR}$ it is sufficient to show that $\adr(R) \subset \sub(R)$.

To show this consider an indecomposable object of $\adr(R)$. This is necessarily of the form $R e_i/\rad^l \, R e_i$ for some primitive idempotent $e_i$ corresponding to a vertex $i \in Q$ and integer $l=1, \dots, m$. As $Q$ has no sources it follows that there exists a series of arrows $j_{m-l} \xrightarrow{a_{m-l}} \dots \xrightarrow{a_3} j_2 \xrightarrow{a_2} j_{1} \xrightarrow{a_1} i$ such that the path $a:=a_{m-l} \dots a_1$ induces a homomorphism $Re_i \xrightarrow{a} R e_{j_{m-l}}$ of indecomposable projective $R$-modules. By construction this has kernel $\rad^{l} R e_i$, and hence there is an inclusion $Re_i/\rad^l \, Re_i \rightarrow Re_{j_{m-l}}$. In particular $R e_i/\rad^l \, R e_i \in \sub(R)$, and hence $\adr(R) \subset \sub(R)$. Hence $E_R\cong E_R^{\ADR}$.

Whilst the layer functions defining the quasi-hereditary structures on $E_R$ and $E_R^{\ADR}$ are not identical in general, we claim that the corresponding orderings do induce the same standard modules and hence the same quasi-hereditary structure on $E_R^{\ADR} \cong E_R$. To show this we let $P_{i,l}$ denote the projective $E_R \cong E_R^{\ADR}$-module $P(Re_i/\rad^l Re_i)$ and $S_{i,l}$ denote its simple quotient. We recall the order for $E_R^{\ADR}$ is defined by $S_{i,l}<S_{j,k} \Leftrightarrow  l>k$ and the order for $E_R$ is defined by $S_{i,l}<S_{j,k} \Leftrightarrow \dim Re_i/\rad^l Re_i > \dim Re_j/\rad^k Re_j$. In particular, both orderings induce strongly quasi-hereditary structures, and hence  for both orderings there are short exact sequences defining the respective standard modules 
\begin{align}\label{E:Standard}
0 \rightarrow \bigoplus P_{j,k} \rightarrow P_{i,l} \rightarrow \Delta(Re_i/\rad^l Re_i) \rightarrow 0
\end{align}
for each projective module $P_{i,l}$, see Definition \ref{D:RingelStrongQH}. Hence to show that the two orderings induce the same quasi-hereditary structure it is sufficient to show that the projective submodules $P_{j,k}$ of  $P_{i,l}$ appearing in \eqref{E:Standard} are the same for both orderings. For this we note that under the additive anti-equivalence 
\[
\Hom_{R}(-, \PI(R)):\mathsf{pi}(R) \rightarrow E_R \textnormal{-proj}
\]
an $E_R$-module $P_{j,k}$ is a proper submodule of $P_{i,l}$ if and only if the corresponding $R$-module $Re_{j}/\rad^k R e_j$ is a proper quotient of $Re_{i}/\rad^l R e_i$. This in turn is equivalent to $\dim Re_j/\rad^k R e_j < \dim Re_{i}/ \rad^{l} Re_{j'}$ and is also equivalent to $i=j$ and $k<l$.
This shows that the two orderings induce the same quasi-hereditary structure.
\end{proof}

It is a natural question whether it is possible to find an expanded class of algebras with a more general Ringel duality formula that encompasses both Theorems \ref{T:Main} and \ref{T:ADR}.

\subsection{Nilpotent quiver algebras} \label{Section:NQA}
The nilpotent quiver algebras introduced by Eiriksson \& Sauter \cite[Section 3]{EirikssonSauter} are a class of quasi-hereditary algebras.

\begin{defn}
Let $Q=(Q_0,Q_1)$ be a finite quiver. For $s\in \mathbb{Z}_{>0}$ the \emph{nilpotent quiver algebra} is defined to be 
\[
N_s(Q):= kQ^{(s)}/J
\]
where $Q^{(s)}$ is the \emph{staircase quiver} $Q^{(s)}$ defined by having vertices 
\[
i_l \text{ for } i \in Q_0 \text{ and } l\in \{1, \dots, s \}
\]
and arrows 
\begin{align*}
&b(i_l):i_{l+1} \rightarrow i_{l} &\text{ for }i \in Q_0 \text{ and }l \in \{1, \dots,s-1 \} \text{ and} \\
&a_l: h(a)_{l-1} \rightarrow t(a)_{l} &\text{ for } a \in Q_1 \text{ and } l \in \{2, \dots, s\},
\end{align*}
and where $J \subset kQ^{(s)}$ is the two-sided ideal generated by the relations
\begin{align*}
b(t(a)_l) a_{l+1}&=a_l b(h(a)_{l-1}) &\text{ for all } a \in Q_1 \text{ and } l \in \{2, \dots, s\}, \text{ and} \\
b(t(a)_1) a_2 &=0 &\text{ for all } a \in Q_1.
\end{align*}
\end{defn}

\begin{rem} We remark again that the nilpotent quiver algebra defined here is the opposite algebra of the nilpotent quiver algebra defined by Eiriksson and Sauter in \cite{EirikssonSauter}, however the effect on the quasi-hereditary structure is straightforward as is explained in Remark \ref{R:QH opposite algebras}.
\end{rem}

It follows from \cite[Proposition 3.15]{EirikssonSauter} that all nilpotent quiver algebras $N_s(Q)$ are right strongly quasi-hereditary and left ultra strongly quasi-hereditary for the quasi-hereditary structure determined by the layer function $L(i_t)=s-t$.

In particular, for $R=kQ/J^m$ the ADR and nilpotent quiver algebras are related as follows.

\begin{prop} \label{P:ADR and NSQ}
Let $Q$ be a finite quiver, $J$ the two-sided ideal generated by all arrows in $Q$, and $m$ a positive integer. Then there is an isomorphism of quasi-hereditary algebras
\[
N_{m}(Q) \cong  E_{kQ/J^m}^{\ADR} 
\]
if and only if all projective $kQ/J^m$-modules have Loewy length $m$: i.e. $Q$ contains no sinks and $m$ is arbitrary or $m=1$ and $Q$ is arbitrary.
\end{prop}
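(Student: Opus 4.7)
The plan is to treat the two directions of the equivalence separately.

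For the ``only if'' direction, I will argue by counting simple modules. The indecomposable modules in $\adr(R)$ are precisely the modules $M_i^l := Re_i/\rad^l Re_i$ with $i \in Q_0$ and $1 \le l \le L_{Re_i}$, so $E_R^{\ADR}$ has $\sum_{i \in Q_0} L_{Re_i}$ simples while $N_m(Q)$ has $m|Q_0|$ simples (one per vertex of $Q^{(m)}$). An isomorphism forces $L_{Re_i} = m$ for every $i$. For $m = 1$ this is vacuous; for $m \ge 2$, it is equivalent to the existence of a path of length $m-1$ from every vertex of the finite quiver $Q$, which is equivalent to $Q$ having no sinks, since from any non-sink vertex one can iteratively extend outgoing arrows.

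For the ``if'' direction, assume $L_{Re_i} = m$ for all $i$, so both algebras have $m|Q_0|$ simples. I will construct an algebra homomorphism $\phi \colon N_m(Q) \to E_R^{\ADR}$ on generators by sending $e_{i_l}$ to the projection onto $M_i^l$, sending $b(i_l)$ to the natural surjection $M_i^{l+1} \twoheadrightarrow M_i^l$, and sending $a_l$ to the $R$-linear map $M_{h(a)}^{l-1} \to M_{t(a)}^l$ given by right-multiplication by $a \in e_{h(a)} R e_{t(a)}$ (well-defined since $a \in \rad R$ sends $\rad^{l-1} Re_{h(a)}$ into $\rad^l Re_{t(a)}$). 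Both defining relations of $N_m(Q)$ are easy to verify: the commutation $b(t(a)_l) a_{l+1} = a_l b(h(a)_{l-1})$ is the commutativity of the evident square (both composites send the class of $e_{h(a)} \in M_{h(a)}^l$ to the class of $a$ in $M_{t(a)}^l$), and $b(t(a)_1) a_2 = 0$ reflects that $a$ vanishes in the length-$1$ quotient $M_{t(a)}^1$. To show $\phi$ is an isomorphism, I will match dimensions of the graded pieces $e_{j_k} N_m(Q) e_{i_l}$ and $\Hom_R(M_i^l, M_j^k)$.

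Using $\Hom_R(Re_i, Re_j) \cong e_i R e_j$ in the paper's path-algebra convention, descent to the quotients gives a basis of $\Hom_R(M_i^l, M_j^k)$ indexed by $Q$-paths from $j$ to $i$ of length $n$ with $\max(0, k-l) \le n \le k-1$: the upper bound ensures a nonzero class in $M_j^k$, and the lower bound is forced by the descent condition $\rad^l y \subseteq \rad^k Re_j$, using the hypothesis $L_{Re_i} = m$ to guarantee nonzero length-$l$ extensions of every shorter path from $i$. On the $N_m(Q)$ side, I will show that the commutation relation brings every monomial from $i_l$ to $j_k$ into a ``down-then-up'' normal form (all $b$'s happen first, then all $a$'s), which is nonzero precisely when the length $n$ of the underlying $Q$-path from $j$ to $i$ satisfies $n \le k-1$; for $n \ge k$, iterated application of relation $1$ slides a $b$-generator down the staircase until the subword $b(t(a)_1) a_2$ appears, whereupon relation $2$ annihilates the monomial. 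This yields matching bases of the two graded pieces, and the evident surjectivity of $\phi$ (compositions of the chosen $\pi$'s and $\mu$'s realise right-multiplication by arbitrary paths) then forces $\phi$ to be a bijection.

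The main obstacle is the bookkeeping in the commutation-reduction argument showing that all monomials with $n \ge k$ vanish in $N_m(Q)$: one has to inductively push the $(q$-th$)$ $b$-generator past the sequence of $a$-generators, each swap decreasing its level by one via relation $1$, until the subscript $1$ is reached and relation $2$ applies. Finally, the quasi-hereditary structures on both sides are realized by compatible layer functions $L(i_l) = m - l$ and $L(M_i^l) = L_R - l = m - l$, so $\phi$ is automatically an isomorphism of quasi-hereditary algebras.
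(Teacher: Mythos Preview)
Your proposal is correct and follows essentially the same strategy as the paper: count simples for the ``only if'' direction, then for the ``if'' direction construct the homomorphism $\phi\colon N_m(Q)\to E_R^{\ADR}$ on generators, verify the defining relations, and prove it is an isomorphism by showing that both $e_{j_k}N_m(Q)e_{i_l}$ and $\Hom_R(M_i^l,M_j^k)$ have bases indexed by $Q$-paths $p$ from $j$ to $i$ with $\max(0,k-l)\le |p|\le k-1$, with $\phi$ matching them. One small point where you are in fact more explicit than the paper: you correctly flag that the lower bound $|p|\ge k-l$ in the $\Hom$-computation genuinely uses the hypothesis $L_{Re_i}=m$ (to produce a length-$l$ path starting at $i$ witnessing failure of descent when $|p|<k-l$); the paper asserts $\ker(p)=\rad^{k-|p|}Re_i$ and the inclusion criterion without isolating this dependence.
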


\begin{proof}
Let $R=kQ/J^m$, and let $e_i \in R$ for $i \in Q_0$ denote the primitive idempotents corresponding to vertices of $Q$. Up to isomorphism, the indecomposable modules in $\adr(R)$ are exactly $Re_i/ \rad^l Re_i$ for $1 \le l \le L(Re_i)$ and $i \in Q_0$, where $L(Re_i)$ is the Loewy length of the projective $Re_i$. 

In particular, the maximal Loewy length of a projective module in $R:=kQ/J^m$ is $m$ and so the maximum possible number of non-isomorphic indecomposables in $\adr(R)$ is $m|Q_0|$. But $|Q^{(m)}_0|=m|Q_0|$, so for $E^{\ADR}_R$ to be isomorphic to $N_{m}(Q)$ it is necessary that all projective $R$-modules have Loewy length $m$. 

Now suppose that all projective $R$-modules do have Loewy length $m$ and consider the algebra $E_R^{\ADR} :=\End_R(\ADR)$. We start by labelling the indecomposable module in $\adr(R)$ corresponding to $Re_i/ \rad^{l} \, Re_i$ by $i_l$ and hence label the corresponding primitive idempotent by $e_{i_l}$. There are indecomposable modules $i_{l}$ for $i \in Q_0$ and $l \in \{1,\dots, m \}$, matching the definition of the vertices in the staircase quiver $Q^{(m)}$.

We now want to produce a morphism $N_m(Q) \rightarrow E_R^{\ADR}$, and to do this we consider the morphisms between the indecomposable modules in $\adr(R)$.  Firstly, there are surjections $Re_i/ \rad^{l+1} Re_i \rightarrow Re_i/ \rad^{l} Re_i$ which we label by arrows $\beta(i_l):i_{l+1} \rightarrow i_{l}$ for $i \in Q_0$ and $l \in \{1, \dots, m-1 \}$. 

Secondly, an arrow $a: i \rightarrow j \in Q_1$ corresponds to a morphism of projectives $a: R e_j\rightarrow R e_i$ and for each $l$ this induces a morphism $Re_j \rightarrow Re_i / \rad^l \, Re_i$ with kernel $\rad^{l-1} \, Re_j$ which  in turn induces an injective morphism
\[
Re_j/ \rad^{l-1} Re_j \rightarrow Re_i/ \rad^{l} Re_i 
\]
for each $l \in \{2, \dots, m\}$. We label these morphisms by $\rho(a)_l:h(a)_{l-1} \rightarrow t(a)_{l}$ for $a \in Q_1$ and $l \in \{2, \dots, m\}$. In particular, the morphisms described here match the arrows of the staircase quiver $Q^{(m)}$ under the identification $a_l=\rho(a)_l$ and $b(i_l)=\beta(i_l)$.  In particular, an arrow $a:i \rightarrow j$ in $Q$ corresponds to a morphism $Re_j \rightarrow Re_i$ which induces morphisms
\[
\begin{tikzpicture}
\node (A0) at (0,0) {$j_{m}$};
\node (A1) at (1.5,0)  {$j_{m-1}$};
\node (A2) at (3,0)  {$\dots$};
\node (A3) at (4.5,0)  {$j_{l+1}$};
\node (A4) at (6,0)  {$j_{l}$};
\node (A5) at (7.5,0)  {$j_{l-1}$};
\node (A6) at (9,0)  {$\dots$};
\node (A7) at (10.5,0)  {$j_2$};
\node (A8) at (12,0)  {$j_1$};

\node (B0) at (0,2) {$i_{m}$};
\node (B1) at (1.5,2)  {$i_{m-1}$};
\node (B2) at (3,2)  {$\dots$};
\node (B3) at (4.5,2)  {$i_{l+1}$};
\node (B4) at (6,2)  {$i_{l}$};
\node (B5) at (7.5,2)  {$i_{l-1}$};
\node (B6) at (9,2)  {$\dots$};
\node (B7) at (10.5,2)  {$i_2$};
\node (B8) at (12,2)  {$i_1$};

\node (Dots1) at (3,1)  {$\dots$};
\node (Dots2) at (9,1)  {$\dots$};

\draw [left hook->] (A1) to node[gap] {$\scriptstyle{\rho(a)_{m}}$} (B0);
\draw [left hook->] (A4) to node[gap] {$\scriptstyle{\rho(a)_{l+1}}$} (B3);
\draw [left hook->] (A5) to node[gap] {$\scriptstyle{\rho(a)_{l}}$} (B4);
\draw [left hook->] (A8) to node[gap] {$\scriptstyle{\rho(a)_{2}}$} (B7);

\draw [->>] (A0) to node[above] {$\scriptstyle{\beta(j_{m-1})}$} (A1);
\draw [->>] (A1) to node[above] {$\scriptstyle{}$} (A2);
\draw [->>] (A2) to node[above] {$\scriptstyle{}$} (A3);
\draw [->>] (A3) to node[above] {$\scriptstyle{\beta(j_l)}$} (A4);
\draw [->>] (A4) to node[above] {$\scriptstyle{\beta(j_{l-1})}$} (A5);
\draw [->>] (A5) to node[above] {$\scriptstyle{}$} (A6);
\draw [->>] (A6) to node[above] {$\scriptstyle{}$} (A7);
\draw [->>] (A7) to node[above] {$\scriptstyle{\beta(j_{1})}$} (A8);

\draw [->>] (B0) to node[above] {$\scriptstyle{\beta(i_{m-1})}$} (B1);
\draw [->>] (B1) to node[above] {$\scriptstyle{}$} (B2);
\draw [->>] (B2) to node[above] {$\scriptstyle{}$} (B3);
\draw [->>] (B3) to node[above] {$\scriptstyle{\beta(i_l)}$} (B4);
\draw [->>] (B4) to node[above] {$\scriptstyle{\beta(i_{l-1})}$} (B5);
\draw [->>] (B5) to node[above] {$\scriptstyle{}$} (B6);
\draw [->>] (B6) to node[above] {$\scriptstyle{}$} (B7);
\draw [->>] (B7) to node[above] {$\scriptstyle{\beta(i_{1})}$} (B8);
\end{tikzpicture}
\]
where the relations $\beta(i_l) \rho(a)_{l+1}  \cong   \rho(a)_l \beta(j_{l-1}) $ and $\beta(i_1) \rho(a)_2\cong 0$ hold.

This allows us to define a morphism from the path algebra of the staircase algebra $ kQ^{(m)}$ to $E_R^{\ADR}$
by
\[
e_{i_t}  \mapsto e_{i_t}, \quad
b(i_t)  \mapsto \beta(i_t), \quad \text{ and }
a_l   \mapsto \rho(a)_l
\]
and, as the relations imposed on $kQ^{(m)}$ by $N_m(Q)$ are mapped to 0, this induces a morphism 
\[
\Phi:N_{m}(Q) \rightarrow E_R^{\ADR}.
\]
We will now show that $\Phi$ is surjective, and then calculate the dimensions of $N_m(Q)$ and $E_R^{\ADR}$ to show that it is an isomorphism.

Suppose that $f \in E_R^{\ADR} :=\End_R(\ADR)$ is a morphism 
\[
f:Re_i/\rad^l Re_i \rightarrow Re_j/\rad^k  Re_j
\]
for some $i,j \in Q_0$ and $l,k \in \{ 1, \dots, m\}$.
There is a surjection $\pi_{i,l}:Re_i \rightarrow Re_i /  \rad^l  Re_i$ and so $f$ gives a morphism $f \circ \pi_{i,l}: R e_i \rightarrow  Re_j/\rad^k  Re_j$. There is also a surjection $\pi_{j,k}: R e_j \rightarrow  Re_j/\rad^k  Re_j$, and as $Re_i$ is projective this induces a uniquely defined morphism $g:Re_i \rightarrow Re_j$ such that 
\[
\pi_{j,k} \circ g \cong f \circ \pi_{i,l}.
\] 
As a morphism between projective modules, the morphism $g: Re_i \rightarrow R e_j$ corresponds to an element $g \in e_i R e_j \subset R \cong \End_R(\bigoplus R e_i)^{\op}$. In particular, $g= \sum_p \lambda_p p \in kQ/J^m=R$ for scalars $\lambda_p$ and homogeneous paths $p$ from $j$ to $i$ in $kQ/J^m=R$ corresponding to morphisms $p:Re_i \rightarrow Re_j$. 

We now work with one indecomposable path $p$, corresponding to a morphism $p:Re_i \rightarrow Re_j$, and suppose that $p$ consists of $n:=|p|$ arrows $p=a_n a_{n-1} \dots a_1$ for $a_i \in Q_1$. We define a corresponding path in $N_m(Q)$ from $i_{l-n}$ to $j_l$ by
\[
(p)_l:=(a_1)_{l} \dots (a_{n-1})_{l+1}(a_n)_{l-n+1}: i_{l-n} \rightarrow j_l
\]
for $l \in \{n+1, \dots, m\}$. Similarly, we define the path in $N_m(Q)$
\[
(\pi_{i,l}):= b(i_l) b(i_{l+1}) \dots b(i_{m-1}): i_m \rightarrow i_l
\]
for $i \in Q_0$ and $l \in \{1, \dots, m\}$ such that $\Phi( (\pi_{i,l}) ) \cong \pi_{i,l}$. Then the morphism $p$ factors over its kernel, which is $\rad^{m-n} \, R e_i$, so 
\[
p \cong  \Phi \left( (p)_m (\pi_{i,m-n}) \right).
\]
Hence
\[
\pi_{j,k} \circ p \cong  \Phi \left(  (\pi_{j,k}) (p)_m (\pi_{i,m-n}) \right).
\]
Using the relations in $N_m(Q)$ we can rearrange this expression as 
\begin{align*}
(\pi_{j,k}) (p)_m (\pi_{i,m-n}) &= b(j_k) \dots b(j_{m-1}) (p)_m (\pi_{i,m-n})  \\
&= (p)_k b(i_{k-n}) \dots b(i_{m-n-1}) (\pi_{i,m-n}) \\
 &= (p)_k (\pi_{i,k-n}) 
 \end{align*}
 where we note that if $g$ is non-zero then $k-n  \le l$ and hence
 \[
 (\pi_{j,k}) (p)_m (\pi_{i,m-n})= (p)_k (\pi_{i,k-n}) = (p)_k b(i_{k-n}) \dots b(i_{l-1})(\pi_{i,l}),
 \]
 and hence 
 \[
 \Phi((p)_k b(i_{k-n}) \dots b(i_{l-1})(\pi_{i,l}) ) \cong \pi_{j,k} \circ p.
 \]
Returning to the morphism $g = \sum \lambda_p p$ we see that
\[
\sum_p \lambda_p \pi_{j,k}  \circ p  \cong  \pi_{j,k}  \circ \sum_p \lambda_p p \cong \pi_{j,k} \circ g \cong f \circ \pi_{i,l}
\]
and we can now conclude that 
\[
\sum_p \lambda_p \Phi( (p)_k b(i_{k-|p|}) \dots b(i_{l-1})(\pi_{i,l}) ) \cong  \Phi \left( \sum \lambda_p (p)_k b(i_{k-|p|}) \dots b(i_{l-1}) \right) \circ \pi_{i,l} \cong f \circ \pi_{i,l}
\]
where $|p|$ is the length of the path p, but $\pi_{i,l}$ is surjective and hence 
\[
f \cong \Phi(\sum_p \lambda_p (p)_{k} b(i_{k-|p|}) \dots b(i_{l-1})).
\]
We conclude that $\Phi$ is a surjection, and we now show that this surjective morphism is in fact an isomorphism by calculating the dimensions of $N_m(Q)$ and $E_R^{\ADR}$.

We first calculate the dimension of $E_R^{\ADR}$ by calculating the dimension of the morphisms between any two indecomposables in $\adr(R)$. As shown above, a morphism in $E_R^{\ADR}$ of the form $f: R e_i/  \rad^l  Re_i \rightarrow Re_j/ \rad^k  Re_j$ is induced by a particular element in $kQ/J^m$ corresponding to a morphism of projective modules $Re_i \rightarrow Re_j$. Such elements are spanned by the paths, and we now calculate the morphisms in $E_R^{\ADR}$ that are induced by such path in $R=kQ/J^m$. These will give a basis for the morphisms $Re_i/ \rad^l Re_i \rightarrow Re_j / \rad^k Re_j$. A path $p:j \rightarrow i \in kQ/J^m = R$ of length $|p|$ (under the length grading on $Q$) induces the morphism $p:Re_i \rightarrow Re_j$ which composes to give a nonzero morphism $Re_i \rightarrow R e_j / \rad^k  Re_j$ if and only if $|p|<k$. In turn, this descends to give a nonzero morphism $Re_i/ \rad^l  Re_i \rightarrow R e_j / \rad^k  Re_j$ if and only if $\rad^l  Re_i \subset \ker(p)=\rad^{k-|p|}  R e_i$, which occurs if and only if $l \ge k-|p|$. As such there are isomorphisms of vector spaces
\begin{align*}
e_{j_k} E_{R}^{\ADR}  e_{i_l}  &= \left\langle \begin{array}{c} \text{Elements of $\Hom_R(Re_i,Re_j)$ that factor through} \\
\text{$\Hom_R(Re_i/ \rad^l \, Re_i,Re_j/ \rad^k \, Re_j)$. } \end{array} \right \rangle \\
&=  \left \langle \begin{array}{c}
\text{Paths }\, p \in e_i R e_j  \\
\text{ such that } k-l  \le |p| < k.
\end{array}
\right \rangle.
\end{align*}

We then calculate the dimension of $N_m(Q)$ by counting the number of paths between any two vertices. Using the explicit description of $N_m(Q)$ above, any path in $N_m(Q)$ corresponds to the composition of arrows of type $a_l$ and arrows of type $b(i_t)$, these commute $b(t(a)_l)a_{l+1}=a_l b(h(a)_{l-1})$, and $b(t(a)_1) a_2=0$. Using these relations any non-zero path can be rearranged such that all the $b(i_t)$ type arrows occur in the path before the $a_l$ type arrows. That is: a path from $i_l$ to $j_k$ in $N_m(Q)$ exactly corresponds to the path $(a_1)_{k} \dots (a_n)_{k-|p|+1}$ in $N_m(Q)$ induced by a path $p=a_{|p|} \dots a_1$ from $j$ to $i$ in $Q$ of length $|p|$ pre-composed with $l-k+|p|$ arrows of $b(i_t)$ type 
\[
(a_1)_{k} \dots (a_n)_{k-|p|+1} b(i_{k-|p|}) \dots b(i_{l-1}): i_l \rightarrow j_k
\]
so that the induced path is from $i_l$ to $j_k$. However, the path is nonzero if and only if the number of type $b(i_t)$ arrows is greater than or equal to $0$ and strictly less than $l$, and it follows that 
\begin{align*}
 e_{j_k} N_m(Q)  e_{i_l} & = \left \langle \begin{array}{c}
\text{ Paths $p \in e_i R e_j $} \\
\text{ such that $ 0 \le l-k+|p| < l $}
\end{array}
\right \rangle.
\end{align*}
Hence
\begin{align*}
\dim e_{j_k} N_m(Q) e_{i_l}  
 =  \dim \langle \, p \in e_i R e_j \, | \, k-l  \le |p| < k \rangle = \dim e_{j_k}  E_R^{\ADR} e_{i_l}.
\end{align*}
It follows that the surjective homomorphism $\Phi:N_m(Q) \twoheadrightarrow E_R^{\ADR}$ is in fact an isomorphism as $\dim N_m(Q)=\dim E_R^{\ADR}$. Hence $E_{R}^{\ADR} \cong N_{m}(Q)$.

Further, under this isomorphism the layer functions defining the quasi-hereditary structures on $N_m(Q)$ and $E_R^{\ADR}$ are identified and hence this is an isomorphism of quasi-hereditary algebras.
\end{proof}

 \begin{ex}
 We give a brief example of Proposition \ref{P:ADR and NSQ}. Consider the quiver 
 \[
\begin{tikzpicture} []
\node (c) at (-1.3,0) {$Q:=$};
\node (C1) at (0,0)  {$1$};
\node (C2) at (1.5,0)  {$2$};
\draw [->] (C2) to node[above]  {\scriptsize{$a$}} (C1);
\draw [->, looseness=18, in=142, out=218,loop] (C1) to node[right] {\scriptsize{$x$}} (C1);
\end{tikzpicture}
\]
and let $J$ denote the two-sided ideal generated by all arrows. Define $R:=\mathbb{C}Q/J^3$, and then we present the two algebras $N_3(Q)$ and $E_{R}^{\ADR}$.

Firstly, the  algebra $N_3(Q)$ is defined to be the path algebra of the quiver with relations
\begin{align*}
\begin{aligned}
\begin{tikzpicture} []
\node (c) at (-1.3,0) {$Q^{(3)}:=$};
\node (C13) at (0,0)  {$1_3$};
\node (C12) at (0,-1)  {$1_2$};
\node (C11) at (0,-2)  {$1_1$};
\node (C23) at (1.5,0)  {$2_3$};
\node (C22) at (1.5,-1)  {$2_2$};
\node (C21) at (1.5,-2)  {$2_1$};
\draw [->] (C11) to node[above]  {\scriptsize{$a_2$}} (C22);
\draw [->] (C12) to node[above]  {\scriptsize{$a_3$}} (C23);
\draw [->, bend right = 25] (C11) to node[right] {\scriptsize{$x_2$}} (C12);
\draw [->, bend right = 25] (C12) to node[right] {\scriptsize{$x_3$}} (C13);
\draw [->, bend right = 25] (C12) to node[left] {\scriptsize{$b(1_1)$}} (C11);
\draw [->, bend right = 25] (C13) to node[left] {\scriptsize{$b(1_2)$}} (C12);
\draw [->, bend left = 25] (C23) to node[right] {\scriptsize{$b(2_2)$}} (C22);
\draw [->, bend left = 25] (C22) to node[right] {\scriptsize{$b(2_1)$}} (C21);
\end{tikzpicture}
\end{aligned}
\qquad
\begin{aligned}
b(1_1) x_2&=0, \\
b(2_1)a_2 &= 0, \\
b(1_2) x_3&=x_2 b(1_1), \text{ and } \\
b(2_2)a_3 &= a_2b(1_1).
\end{aligned}
\end{align*}

Secondly, we consider the indecomposable modules in $\adr(R)$. There are 6 classes and we list them and a basis for all injective or surjective maps between them below.
\begin{align*}
\begin{aligned}
1_3:&=Re_1=\langle e_1, x, x^2 \rangle, & 2_3:&=Re_2=\langle e_2, a, xa \rangle, \\
1_2:&=Re_1/\rad^2 Re_1=\langle e_1, x \rangle, & 2_2:&=Re_2/\rad^2 Re_2=\langle e_2, a \rangle,  \\
1_1:&=Re_1/\rad^1 Re_1=\langle e_1\rangle, &2_1:&=Re_2/\rad^1 Re_2=\langle e_2\rangle.
\end{aligned}
\quad
\begin{aligned}
\begin{tikzpicture} []
\node (C13) at (0,0)  {$1_3$};
\node (C12) at (0,-1)  {$1_2$};
\node (C11) at (0,-2)  {$1_1$};
\node (C23) at (1.5,0)  {$2_3$};
\node (C22) at (1.5,-1)  {$2_2$};
\node (C21) at (1.5,-2)  {$2_1$};
\draw [right hook->] (C11) to node[above]  {\scriptsize{$a$}} (C22);
\draw [right hook->] (C12) to node[above]  {\scriptsize{$a$}} (C23);
\draw [right hook->, bend right = 15] (C11) to node[right] {\scriptsize{$x$}} (C12);
\draw [right hook->, bend right = 15] (C12) to node[right] {\scriptsize{$x$}} (C13);
\draw [->>,bend right = 15] (C12) to node[left] {} (C11);
\draw [->>,bend right = 15] (C13) to node[left] {} (C12);
\draw [->>] (C23) to node[right] {} (C22);
\draw [->>] (C22) to node[right] {} (C21);
\end{tikzpicture}
\end{aligned}
\end{align*}
This describes $E_R^{\ADR} :=\End_R(\ADR)$ and matches the path algebra with relations description of $N_3(Q)$ above.
 \end{ex}

Combining Proposition \ref{P:ADR and NSQ} with Proposition \ref{P:ADR=IOM} and Theorem \ref{T:Main} (or Theorem \ref{T:ADR}) instantly gives the following corollary.
\begin{cor} \label{C:Qnosinksnosources}
If $Q$ is a finite quiver without sinks or sources and $m$ is a positive integer, then there are isomorphisms of quasi-hereditary algebras
\[
E_{kQ/J^m} \cong E_{kQ/J^m}^{\ADR} \cong N_{m}(Q).
\]
In particular, the Ringel dual for a nilpotent quiver algebra without sinks or sources is determined by the formula
\[
\mathfrak{R}(N_m(Q)) \cong N_m(Q^\op)^{\op}.
\]
\end{cor}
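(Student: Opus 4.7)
The plan is essentially to chain together the three results already established: Proposition \ref{P:ADR=IOM}, Proposition \ref{P:ADR and NSQ}, and the main Ringel duality formula from Theorem \ref{T:Main} (or alternatively Theorem \ref{T:ADR}). First I would verify that the two hypotheses on $Q$ feed into two different results: the absence of sources is exactly what is needed to invoke Proposition \ref{P:ADR=IOM}, yielding an isomorphism of quasi-hereditary algebras $E_{kQ/J^m} \cong E_{kQ/J^m}^{\ADR}$. The absence of sinks guarantees that every indecomposable projective $kQ/J^m$-module has Loewy length exactly $m$ (since every vertex admits an outgoing path of length $m-1$), which is the hypothesis required for Proposition \ref{P:ADR and NSQ} to produce the isomorphism $E_{kQ/J^m}^{\ADR} \cong N_m(Q)$. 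Concatenating these gives the first claim of the corollary.

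For the Ringel duality statement, I would set $R := kQ/J^m$. Since $Q$ is a finite quiver without sinks or sources, the opposite quiver $Q^{\op}$ also has no sinks or sources, and there is an obvious isomorphism of algebras
\[
R^{\op} = (kQ/J^m)^{\op} \cong kQ^{\op}/(J^{\op})^m,
\]
where $J^{\op}$ denotes the arrow ideal of $kQ^{\op}$. In particular, the first part of the corollary applies equally to $R^{\op}$, giving $E_{R^{\op}} \cong N_m(Q^{\op})$. Since $R$ is an ideally ordered monomial algebra (Example \ref{ex:ideallyordered} (2)), Theorem \ref{T:Main} gives the Ringel duality formula
\[
\mathfrak{R}(E_R) \cong (E_{R^{\op}})^{\op}.
\]
Substituting both sides via the identifications established above produces $\mathfrak{R}(N_m(Q)) \cong N_m(Q^{\op})^{\op}$, as required.

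There is no real obstacle in this proof: it is genuinely a corollary, and the only thing to check carefully is that the quasi-hereditary structures being identified at each step actually match (rather than just the underlying algebras). For the first isomorphism this was handled inside Proposition \ref{P:ADR=IOM} by verifying that the two layer functions induce the same standard modules, and for the second it was handled inside Proposition \ref{P:ADR and NSQ}. Consequently the Ringel dual computed via Theorem \ref{T:Main} is the Ringel dual of the nilpotent quiver algebra with respect to its standard quasi-hereditary structure, making the final formula an honest isomorphism of quasi-hereditary algebras.
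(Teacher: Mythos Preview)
Your proposal is correct and matches the paper's own approach exactly: the paper simply states that the corollary follows instantly by combining Proposition \ref{P:ADR and NSQ} with Proposition \ref{P:ADR=IOM} and Theorem \ref{T:Main} (or Theorem \ref{T:ADR}). Your write-up supplies precisely the details behind this one-line justification, including the observation that $Q^{\op}$ again has no sinks or sources so the first part applies to $R^{\op}$ as well.
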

We note that if $Q$ is a finite quiver with no sinks but with sources then $E_{kQ/J^m}^{ADR} \cong N_m(Q)$ but $E_{kQ^{\op}/J^m}^{\ADR} \not \cong N_m(Q^\op)^{\op}$ (if $m>1$) as $kQ^{\op}$ contains sinks. In particular, Proposition \ref{P:ADR and NSQ} and Theorem \ref{T:ADR} cannot be used to strengthen the Ringel duality formula of Corollary \ref{C:Qnosinksnosources} to all quivers with no sources.

\subsection{Auslander and Nakayama algebras} \label{Section:AuslanderNakayama}

For a finite dimensional algebra $R$ of finite representation type we define $\AUS:= \bigoplus_{M \in \ind (R-\mod)} M$, where the sum is taken over all indecomposable $M \in R-\mod$ up to isomorphism,  and the Auslander algebra is defined to be
\[
E_R^{\AUS}:= \End_R(\AUS).
\]

\begin{prop} \label{p:AUSisIOM} If $R$ is an ideally ordered monomial algebra, then
\[
E_R^{\AUS} \cong E_R
\]
if and only if $R$ is self-injective.
\end{prop}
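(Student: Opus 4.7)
The plan is to reduce the algebra isomorphism to an equality of module categories, and then to recognise this equality as the self-injectivity of $R$.

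First I would observe that both $E_R$ and $E_R^{\AUS}$ are basic algebras, obtained as endomorphism rings of the direct sums of one representative of each indecomposable object in $\sub(R)$ and in $R\text{-}\mod$ respectively. Hence their number of isomorphism classes of simple modules equals $|\ind \sub(R)|$ and $|\ind R\text{-}\mod|$ respectively. Since $\sub(R) \subseteq R\text{-}\mod$ always, the condition $E_R^{\AUS} \cong E_R$ forces $\ind \sub(R) = \ind R\text{-}\mod$, and conversely, an equality of these additive categories gives an isomorphism of endomorphism rings. So the task reduces to showing that every indecomposable $R$-module is torsionless if and only if $R$ is self-injective.

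For the (easier) direction that self-injectivity implies all modules are torsionless: when $R$ is self-injective, every indecomposable injective $R$-module is (isomorphic to) an indecomposable projective $R$-module, because the number of indecomposable injectives, simples, and indecomposable projectives all coincide and the indecomposable injectives are then projective. Consequently every $R$-module $M$ embeds into its injective envelope, which is a direct summand of $R^n$ for some $n$, so $M \in \sub(R)$. Hence $\sub(R)=R\text{-}\mod$.

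For the converse, assume $\sub(R)=R\text{-}\mod$. Then in particular each indecomposable injective $R$-module $I$ is torsionless, so embeds in $R^n$ for some $n$. Because $I$ is injective this embedding splits, making $I$ a summand of $R^n$, hence projective. So every indecomposable injective $R$-module is projective; by the counting argument noted above this says $R$ is self-injective.

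The main technical point (and the only place care is needed) is the first reduction: ensuring that an abstract algebra isomorphism $E_R^{\AUS} \cong E_R$ really does force $\ind \sub(R) = \ind R\text{-}\mod$. This follows from the observation above that both algebras are basic with simples indexed by indecomposables in their respective defining additive subcategories. Beyond this, the argument is a standard use of injective envelopes and splitting; neither direction requires the ideally-ordered monomial hypothesis, which is used only to guarantee that the algebras $E_R$ are well-defined within the framework of the paper.
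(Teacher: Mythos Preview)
Your proof is correct and follows essentially the same approach as the paper's: both reduce to showing $\sub(R)=R\text{-}\mod$ if and only if $R$ is self-injective, then argue via injective envelopes and splitting. You are more explicit than the paper about why the abstract algebra isomorphism $E_R^{\AUS}\cong E_R$ forces $\ind\sub(R)=\ind R\text{-}\mod$ (via the simple count for basic algebras and the containment $\sub(R)\subseteq R\text{-}\mod$), and your observation that the ideally-ordered monomial hypothesis plays no essential role in the argument itself is accurate.
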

\begin{proof}

If $E_R^{\AUS} \cong E_R$ then $R-\mod \cong \sub(R)$, hence every injective $R$-module $I$ embeds into $R^n$. Therefore, $I$ is a direct summand of $R^n$, hence projective, and hence $R$ is self-injective. 

Conversely, if $R$ is self-injective, then every injective $R$-module embeds into $R^n$ for some $n$ and hence every injective module is also a projective module. Then every object in $R-\mod$ is a submodule of an injective $R$-module, hence of a projective $R$-module, hence $R-\mod \cong \sub(R)$ and $E_R^{\AUS} \cong E_R$.
\end{proof}

The Nakayama algebras, introduced in \cite{Nakayama}, are a well known class of finite dimensional algebras with finite representation type; see e.g. \cite[Theorem VI.2.1]{ARS}. Recall that a self-injective Nakayama algebra is of the form $kC_n/J^m$ where $C_n$ is an oriented cycle with $n$ vertices and $J$ is the ideal generated by all arrows, see e.g. \cite[Theorem 32.4]{AndersonFuller} for the a description of the underlying quiver of a general Nakayama algebra. In particular, the self injective Nakayama algebras are ideally ordered monomial algebras. 
 
\begin{cor}\label{c:AUSisNAK} If $R$ is a self-injective Nakayama algebra, then
\[
E_R^{\AUS} \cong E_R.
\]
\end{cor}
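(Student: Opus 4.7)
The plan is to simply reduce the corollary to the preceding Proposition \ref{p:AUSisIOM}. That proposition already states that for any ideally ordered monomial algebra $R$ one has $E_R^{\AUS} \cong E_R$ precisely when $R$ is self-injective, so all that remains is to verify the two hypotheses on a self-injective Nakayama algebra $R$: that it is an ideally ordered monomial algebra, and that it is self-injective.

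Self-injectivity holds by assumption. For the ideally ordered monomial property, I would invoke the structural description recalled in the paragraph preceding the corollary, namely that every self-injective Nakayama algebra is of the form $kC_n/J^m$ where $C_n$ is an oriented cycle with $n$ vertices and $J$ is the arrow ideal. This exhibits $R$ as a member of the family $kQ/J^m$ treated in Example \ref{ex:ideallyordered} (2), which is explicitly shown there to be ideally ordered.

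With both hypotheses verified, the conclusion $E_R^{\AUS} \cong E_R$ follows immediately from Proposition \ref{p:AUSisIOM}. There is no real obstacle here; the only small point to make crisply is the citation to the standard classification of self-injective Nakayama algebras as $kC_n/J^m$, which is the bridge that places them inside the ideally ordered monomial framework where Proposition \ref{p:AUSisIOM} applies.
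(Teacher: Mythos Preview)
Your proposal is correct and matches the paper's approach exactly: the paper states the corollary without proof, having just recalled in the preceding paragraph that self-injective Nakayama algebras are of the form $kC_n/J^m$ and hence ideally ordered monomial, so that Proposition~\ref{p:AUSisIOM} applies directly.
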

It follows from the explicit description $R=kC_n/J^m$ that $E_R=N_m(C_n)$ by Corollary \ref{C:Qnosinksnosources} and so this corollary recovers the well-known explicit description of the Auslander algebras of self-injective Nakayama algebra $E_R$ in terms of quivers with relations. As $(kC_n/J^m)^{\op} \cong kC_n/J^m$ and $N_m(C_n) \cong N_m(C_n^{\op})^{\op}$ the Ringel duality formula  recovers the result of \cite{Tan} that the Auslander algebras of self-injective Nakayama algebras are Ringel-self-dual for the ideal layer function.

\begin{cor} \label{c:RingelSelfDual}
For a self-injective Nakayama algebra $R$
\[
\mathfrak{R}(E_R) \cong E_R.
\]
\end{cor}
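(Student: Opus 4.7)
The plan is to deduce this as an almost immediate specialisation of the main Theorem \ref{T:Main}, using nothing more than the self-injective hypothesis to collapse two auxiliary subcategories.

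First I would verify the hypotheses of Theorem \ref{T:Main} are in force: a self-injective Nakayama algebra is of the form $R = kC_n/J^m$ for $C_n$ an oriented cycle, and by Example \ref{ex:ideallyordered} (2) and (4) (or directly from the monomial cyclic structure) this is an ideally ordered monomial algebra. Consequently Theorem \ref{T:Main} gives the Ringel duality formula
\[
\mathfrak{R}(E_R) \;=\; \mathfrak{R}(\End_R(\SUB(R))) \;\cong\; \End_R(\FAC(R_R^{\dagger})).
\]

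Next I would exploit self-injectivity to compare the two sides. Since $R$ is self-injective, $\add R = \add R_R^{\dagger}$, so $R$ is simultaneously a projective generator and an injective cogenerator of $R$-$\mod$, as is $R_R^{\dagger}$. Hence every indecomposable $R$-module embeds into some $R^n$ (because $R_R^{\dagger} \in \add R$ is an injective cogenerator) and every indecomposable $R$-module is a quotient of some $(R_R^{\dagger})^n$ (because $R \in \add R_R^{\dagger}$ is a projective generator). This forces $\sub(R) = R\text{-}\mod = \fac(R_R^{\dagger})$, so the basic modules $\SUB(R)$ and $\FAC(R_R^{\dagger})$ — each being the direct sum of one copy of every indecomposable $R$-module — are isomorphic. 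Taking endomorphism algebras yields
\[
\mathfrak{R}(E_R) \;\cong\; \End_R(\FAC(R_R^{\dagger})) \;\cong\; \End_R(\SUB(R)) \;=\; E_R.
\]

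I do not foresee any genuine obstacle, since once Theorem \ref{T:Main} is invoked, the argument is structural and reduces to the well-known fact that a self-injective algebra is its own injective cogenerator up to additive closure. As an alternative route one could argue via the other identification in \eqref{E:RingelDualityFormula}, namely $\mathfrak{R}(E_R) \cong \End_{R^{\op}}(\SUB(R^{\op}))^{\op}$, combined with the quiver automorphism of $C_n$ reversing every arrow to obtain $R \cong R^{\op}$ and then the standard $k$-duality to remove the outer $\op$; but the $\FAC(R_R^{\dagger})$ route is cleaner because it needs only the self-injective hypothesis and not any explicit model of $R$.
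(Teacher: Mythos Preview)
Your proof is correct but follows a different route from the paper. The paper argues via the explicit model $R = kC_n/J^m$: it identifies $E_R$ with the nilpotent quiver algebra $N_m(C_n)$ using Corollary \ref{C:Qnosinksnosources}, and then combines the Ringel duality formula $\mathfrak{R}(N_m(Q)) \cong N_m(Q^{\op})^{\op}$ with the isomorphisms $(kC_n/J^m)^{\op} \cong kC_n/J^m$ and $N_m(C_n) \cong N_m(C_n^{\op})^{\op}$. You instead use the $\FAC(R_R^{\dagger})$ identification in \eqref{E:RingelDualityFormula} and the purely structural observation that self-injectivity collapses $\sub(R) = R\text{-}\mod = \fac(R_R^{\dagger})$, so that $\SUB(R) \cong \FAC(R_R^{\dagger})$. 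Your argument is more conceptual and avoids the detour through nilpotent quiver algebras, relying only on the self-injective hypothesis (and the ideally ordered monomial property, already established for Nakayama algebras); the paper's route, by contrast, ties the result concretely into the $N_m(Q)$ framework developed in Section \ref{Section:NQA}. Amusingly, the alternative you sketch at the end --- via $R \cong R^{\op}$ and the $\End_{R^{\op}}(\SUB(R^{\op}))^{\op}$ form of the formula --- is essentially the paper's approach.
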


\begin{rem}
In order to give another perspective on Proposition \ref{p:AUSisIOM}, Corollary \ref{c:AUSisNAK} and Corollary \ref{c:RingelSelfDual}, we recall that self-injective finite dimensional monomial algebras $R$ are Nakayama algebras. To see this, we have to show that the quiver $Q$ underlying $R$ is a union of oriented lines and oriented cycles. In other words, at every vertex of $Q$ there is at most one incoming and at most one outgoing arrow. Assume that there is a vertex $i$ with more than one outgoing arrow. Then, as $R$ is monomial, the corresponding indecomposable projective $R$-module $P_i$ does not have a simple socle - in particular, $P_i$ is not injective contradicting our assumption that $R$ is self-injective. A dual argument shows that $Q$ does not have vertices with more than one incoming arrow.  
\end{rem}

\section{Appendix: Results on finite dimensional monomial algebras.}

In this section we collect some technical results on finite dimensional monomial algebras $R=kQ/I$ (where $I$ is generated by a collection of paths in $Q$). We will use the term `monomial' to mean a monomial expression in the generators (i.e. arrows and lazy paths) of such an algebra.

\begin{lem}\label{L:StandardEpi}
Let $R$ be a monomial algebra and $n, m \in R$ monomials. If there exists a surjection $\phi:Rm \to Rn$, then the map $Rm \to Rn$ defined by $m \mapsto n$ is $R$-linear.
\end{lem}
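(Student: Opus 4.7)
The plan is to prove the lemma by showing the inclusion of left annihilators $\mathrm{ann}_R(m) \subseteq \mathrm{ann}_R(n)$; this is exactly the condition for the rule $\psi(rm) := rn$ to be well-defined, and $R$-linearity follows automatically. As a preliminary observation, $m$ and $n$ must share a common target vertex $j$ (in the path sense), since the surjection $\phi$ induces an isomorphism between the simple tops $S_{t(m)} \cong S_{t(n)}$ of the cyclic modules $Rm$ and $Rn$; thus both annihilators sit inside $Re_j$.

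Next I would use the monomial structure of $R = kQ/I$. Since $I$ is generated by paths, $R$ has a $k$-basis of nonzero monomial paths, and consequently $Rn$ has the $k$-basis $\{\,pn : p \text{ a path with } pn \neq 0 \text{ in } R\,\}$, while $\mathrm{ann}_R(m)$ is spanned by those paths $p$ with $pm = 0$. Setting $y := \phi(m)$, expand in this basis: $y = c\,n + \sum_i c_i\, p_i n$ with each $p_i$ of positive length. Surjectivity of $\phi$ forces $y$ to generate $Rn$, equivalently $y \notin \mathrm{rad}(R)\cdot Rn$, so the constant coefficient $c$ is nonzero.

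The heart of the argument is then to show that each path $p$ with $pm = 0$ satisfies $pn = 0$. From $\phi(pm) = py = 0$ one obtains the relation
\[
c\,(pn) \;+\; \sum_i c_i\,(pp_i)n \;=\; 0 \quad \text{in } R.
\]
The main obstacle is that $R$ is noncommutative: one cannot simply divide through by the ``unit plus higher order'' factor $c + \sum c_i p_i$. I would resolve this by invoking the length grading on paths. If $pn$ is nonzero in $R$, it is a single monomial of length $|p| + |n|$, while every nonzero $(pp_i)n$ is a monomial of strictly greater length $|p| + |p_i| + |n|$. Since distinct nonzero monomials of $R$ are linearly independent, the coefficient of the minimal-length term $pn$ in the displayed relation must vanish; but that coefficient is $c \neq 0$, a contradiction. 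Hence $pn = 0$, which gives $\mathrm{ann}_R(m) \subseteq \mathrm{ann}_R(n)$ and finishes the proof.
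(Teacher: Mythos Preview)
Your proof is correct and follows essentially the same route as the paper: reduce to the annihilator inclusion $\mathrm{ann}_R(m)\subseteq\mathrm{ann}_R(n)$, write $\phi(m)=cn+\sum c_i p_i n$ with $c\neq 0$ coming from surjectivity, and then use that in the monomial basis the term $pn$ cannot be cancelled by the longer terms $pp_in$. The only cosmetic differences are that you justify $c\neq 0$ via $y\notin\rad(R)\cdot Rn$ (the paper instead picks $t$ with $\phi(tm)=n$ and uses $tsn=n$) and that you make the length-grading argument explicit where the paper simply appeals to linear independence of monomials.
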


\begin{proof}
It suffices to show that $\mathrm{ann}_R(m)$ is contained in $\mathrm{ann}_R(n)$. 
Take $r \in R$ with $rm=0$, and we aim to show that $rn=0$. We write $r = \sum \lambda_i r_i$ with monomials $r_i$ and non-zero scalars $\lambda_i$. Since $R$ is monomial, it follows that $r_i m=0$ for all $i$. The existence of a surjection $\phi:Rm \to Rn$ implies $m, n \in eR$ for some primitive idempotent $e \in R$ and that there exist $s,t \in R$ such that $\phi(tm)=n$ and $\phi(m)=sn$. In particular, $tsn=n$ and so $s=\mu_0 e + \sum_{i=1}^t \mu_i s_i$ for some non-zero scalars $\mu_i$ and distinct monomials $s_i \neq e$. Therefore $r_i s n =\phi(r_im)=\phi(0)=0$, and so as $R$ is monomial it follows that all monomials that make up $r_i s n$ are 0. In particular, $r_i \mu_0 e n=\mu_0 r_i n =0$. This implies that $r_in=0$ for all $i$, and hence $rn=0$ so $\mathrm{ann}_R(m) \subset \mathrm{ann}_R(n)$ finishing the proof.
\end{proof}

\begin{lem}\label{L:GenToGen}
Let $m, n \in R$ be monomials. If $R$ is ideally ordered, then every surjection
$Rm \to Rn$ factors over $\pi \colon Rm \to Rn$, $m \mapsto n$.
\end{lem}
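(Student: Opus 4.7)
The plan is to convert the factorisation question into a containment of annihilators. Given the surjection $\phi\colon Rm\to Rn$, Lemma~\ref{L:StandardEpi} already supplies the canonical surjection $\pi\colon Rm\to Rn,\ m\mapsto n$. What remains is to produce an $R$-linear endomorphism $\psi\in\End_R(Rn)$ with $\psi(n)=\phi(m)$; then $\psi\circ\pi$ and $\phi$ agree on the generator $m$ of $Rm$ and hence are equal. Such a $\psi$ is well-defined precisely when $\mathrm{ann}_R(n)\subseteq\mathrm{ann}_R(\phi(m))$, so the task reduces to proving this single containment.

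Next I would expand $\phi(m)=\sum_{i=1}^{k}\lambda_i p_i$ as a $k$-linear combination of pairwise distinct non-zero monomials, with $\lambda_i\neq 0$. Since $\phi(m)\in Rn$, each $p_i$ lies in $Rn$, so $Rp_i\subseteq Rn$. Letting $e\in R$ be the primitive idempotent with $en=n$, the calculation $n=e\pi(m)=\pi(em)$ forces $em\neq 0$ and hence $em=m$, so $m,n\in eR$; the same argument applied to $\phi(m)=e\phi(m)$ shows every $p_i$ lies in $eR$, and thus the ideally ordered hypothesis is available for every pair $(n,p_i)$. I then observe the trivial containment $\bigcap_i \mathrm{ann}_R(p_i)\subseteq\mathrm{ann}_R(\phi(m))$, which reduces the task to establishing $\mathrm{ann}_R(n)\subseteq\mathrm{ann}_R(p_i)$ individually for each $i$.

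For each $i$, the ideally ordered hypothesis supplies a surjection between $Rn$ and $Rp_i$ in at least one direction. If the surjection goes $Rn\twoheadrightarrow Rp_i$, then Lemma~\ref{L:StandardEpi} directly yields the well-defined map $n\mapsto p_i$, which is exactly the statement $\mathrm{ann}_R(n)\subseteq\mathrm{ann}_R(p_i)$. The case I expect to require the most care is the reverse: if the hypothesis only supplies a surjection $Rp_i\twoheadrightarrow Rn$, then the dimension inequality $\dim Rp_i\geq\dim Rn$ combined with the a priori inclusion $Rp_i\subseteq Rn$ (from $p_i\in Rn$) forces $Rp_i=Rn$ as submodules of $R$. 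The identity then supplies a tautological surjection $Rn\twoheadrightarrow Rp_i$, and Lemma~\ref{L:StandardEpi} applies once more. A subtle point worth flagging is that, over a non-commutative ring, two generators of the same cyclic submodule need not have equal annihilators, so the appeal to Lemma~\ref{L:StandardEpi} is genuinely needed in this apparently degenerate case and cannot be replaced by purely abstract cyclic-module reasoning.
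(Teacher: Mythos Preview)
Your proof is correct and follows essentially the same approach as the paper: both decompose $\phi(m)$ into monomials and use the ideally ordered hypothesis together with Lemma~\ref{L:StandardEpi} to build the factoring endomorphism of $Rn$. The only variation is that the paper first writes $\phi(m)=sn$ with $s\in eRe$ and decomposes $s=\lambda_0 e+\sum\lambda_i s_i$ into monomials $s_i\neq e$, so its monomials $s_in$ automatically satisfy $Rs_in\subsetneq Rn$ and the edge case $Rp_i=Rn$ that you handle separately never arises.
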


\begin{proof}
Let $\psi \colon Rm \to Rn$ be an surjection. In particular, $m, n \in eR$ for some primitive idempotent $e \in R$ and there exist $s,t \in R$ such that $\psi(m)=sn$ and $\psi(tm)=n$. It follows that $tsn=n$, so $s=\lambda_0 e + \sum \lambda_i s_i \in eRe$ for nonzero scalars $\lambda_i$ and distinct monomials $s_i \neq e$. Hence $sn=\lambda_0 n + \sum \lambda_i s_i n$. In particular, $R s_i n \subsetneq Rn$, and since $R$ is ideally ordered there exists surjections $Rn \to R s_i n$ which, using Lemma \ref{L:StandardEpi}, we can assume are defined by $n \mapsto s_i n$. Denote the composition of such a surjection with the inclusion $R s_i n \subseteq Rn$ by $\varphi_i$ and define $\varphi\colon Rn \to Rn$  
as $\varphi= \lambda_0 \mathrm{id} + \sum \lambda_i \varphi_i$. Then $\varphi(n)=sn$ and therefore $\psi = \varphi \pi$ factors as claimed. 
\end{proof}

\begin{lem}\label{L:PIareMonom}
Let $p \in eR$ for a primitive idempotent $e \in R$. If $R$ is ideally ordered, then the principal left ideal $Rp$ is isomorphic to a principal ideal $Rm$, for a monomial  $m \in eR$.
\end{lem}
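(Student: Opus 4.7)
Write $p = \sum_{i=1}^t \lambda_i p_i$ as a linear combination of distinct monomials $p_i \in eR$ with nonzero scalars $\lambda_i \in k$. Using that $R$ is ideally ordered, together with Lemma~\ref{L:StandardEpi}, I would relabel so that $Rp_1$ has maximal $k$-dimension among the $Rp_i$; this yields a surjection $Rp_1 \twoheadrightarrow Rp_i$ given by $p_1 \mapsto p_i$ for each $i$, and hence the containments $\mathrm{ann}_R(p_1) \subseteq \mathrm{ann}_R(p_i) \subseteq \mathrm{ann}_R(p)$. These containments ensure that $p_1 \mapsto p$ extends to a well-defined $R$-linear surjection $f\colon Rp_1 \to Rp$, so the goal becomes to show $f$ is an isomorphism by proving the reverse containment $\mathrm{ann}_R(p) \subseteq \mathrm{ann}_R(p_1)$; this identifies $Rp$ with $Rp_1 = Rm$ for the monomial $m:=p_1$.

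To establish this reverse containment, I would expand an arbitrary $r \in \mathrm{ann}_R(p)$ in the monomial basis as $r = \sum_k \mu_k m_k$, and analyse $rp = \sum_{i,k} \lambda_i \mu_k\, m_k p_i = 0$ by collecting coefficients of each monomial in $R$. For each $k_0$ with $m_{k_0} p_1 \neq 0$ the coefficient equation at the monomial $m_{k_0} p_1$ reads $\lambda_1 \mu_{k_0} + \sum_{k \neq k_0,\ i \geq 2 :\, m_k p_i = m_{k_0} p_1} \lambda_i \mu_k = 0$, so the resulting linear system on $\{\mu_k : m_k p_1 \neq 0\}$ has coefficient matrix $A$ with constant diagonal $\lambda_1$. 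The plan is to argue that $A$ is lower triangular with respect to the length ordering on monomials, so that invertibility forces $\mu_k = 0$ for every $k$ with $m_k p_1 \neq 0$, hence $rp_1 = 0$; since distinct $k$ with $m_k p_1 \neq 0$ yield distinct monomials $m_k p_1$, this is equivalent to $r \in \mathrm{ann}_R(p_1)$.

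The hard part will be verifying this triangularity, i.e.\ showing that a coincidence $m_k p_i = m_{k_0} p_1$ with $k \neq k_0$ and $i \geq 2$ forces $\ell(m_k) < \ell(m_{k_0})$. The length identity $\ell(m_k) - \ell(m_{k_0}) = \ell(p_1) - \ell(p_i)$ reduces this to ruling out $\ell(p_i) \leq \ell(p_1)$ with $p_i \neq p_1$. The equal-length case is immediate since two prefixes of equal length of the same path must agree. The case $\ell(p_i) < \ell(p_1)$ would produce a factorisation $p_1 = \delta p_i$ with $\delta \in eRe$ a nontrivial cycle at the vertex of $e$, whence $Rp_1 = R\delta p_i \subseteq Rp_i$ inside $R$; maximality of $\dim_k Rp_1$ would then force $Rp_1 = Rp_i$ as subsets of $R$, so $p_i \in Rp_1$ could be written $p_i = \sigma p_1 = \sigma\delta p_i$ for some $\sigma \in eRe$. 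This yields $\sigma\delta - e \in \mathrm{ann}_R(p_i) \cap eRe \subseteq \rad(eRe)$, making $\sigma\delta$ a unit in the local algebra $eRe$; but since $R$ is monomial and $\ell(\delta) \geq 1$ we have $\delta \in \rad(eRe)$, so $\sigma\delta \in \rad(eRe)$ cannot be a unit, a contradiction.
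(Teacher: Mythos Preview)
Your proof is correct and follows a somewhat different route from the paper's. The paper first replaces $p$ by $tp$ for a unit $t\in eRe$ so that, after rewriting, no remaining monomial term $q_j$ is of the form $n p_1$ for a nontrivial monomial $n$; it then builds an explicit map $\psi\colon Rp_1\to R$ with image $Rtp$ and checks injectivity by ruling out coincidences $np_1=mq_j$. You instead work with $p$ directly, phrase the question as $\mathrm{ann}_R(p)\subseteq\mathrm{ann}_R(p_1)$, and reduce it to the invertibility of a lower–triangular coefficient matrix. Your approach avoids the preliminary modification step and is arguably cleaner; the trade-off is that you have to carry out the triangularity analysis in full, including the ``$\ell(p_i)<\ell(p_1)$'' case, which the paper's modification eliminates in advance. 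Your treatment of that case via units in the local algebra $eRe$ is in fact more careful than the paper's corresponding step.

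Two small points are worth making explicit. First, your chain $\mathrm{ann}_R(p_1)\subseteq\mathrm{ann}_R(p_i)\subseteq\mathrm{ann}_R(p)$ is slightly imprecise as written for a fixed $i$; what you need (and what your argument actually gives) is $\mathrm{ann}_R(p_1)\subseteq\bigcap_i\mathrm{ann}_R(p_i)\subseteq\mathrm{ann}_R(p)$. Second, when you assert that the coefficient equations form a square system on the variables $\{\mu_k:m_kp_1\neq0\}$, you are implicitly using that any $\mu_k$ occurring in the sum also has $m_kp_1\neq0$. This is true, and it follows immediately from the containment $\mathrm{ann}_R(p_1)\subseteq\mathrm{ann}_R(p_i)$ you already established: if $m_kp_i=m_{k_0}p_1\neq0$ then $m_k\notin\mathrm{ann}_R(p_i)$, hence $m_k\notin\mathrm{ann}_R(p_1)$. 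Stating this closes the only visible gap.
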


\begin{proof}
Since $R$ is monomial, we may write $p$ as linear combination of monomials $p=\sum_{i=1}^t \lambda_i p_i$ with $\lambda_i$ non-zero scalars and $p_i \in eR$ monomials. Since $R$ is ideally ordered we may assume that the $p_i$ are labelled in such a way that $Rp_1 \to Rp_2 \to \cdots \to Rp_t$ are surjections. 

We now wish to rewrite $p$ so that none of the $p_i$ can be expressed in the form $np_1$ for a monomial $n$. To do this, let $I$ index the $p_i$ such that there is a monomial $r_{i}$ with $p_{i}=r_{i}p_1$ for $i \in I$. Then we define $s=\lambda_1 e + \sum_{i \in I} \lambda_{i} r_{i}$ and $p=sp_1+ \sum_{i \notin I} \lambda_i p_i$. As $r:=\sum_{i \in I} \lambda_{i} r_{i} \in \rad R \cap eRe$ it follows that $s= \lambda_1 e+r$ is a unit in $eRe$ and there exists $t \in eRe$ such that $st=e$. In particular, $Rtp =Rp$. Then we rewrite $tp=tsp_1+ \sum_{i\notin I} \lambda_i tp_i = p_1 + \sum_{j=2}^{t'} \mu_j q_j$ for some non-zero scalars $\mu_j$ and monomials $q_j \ne e$. For each $q_j$ there is some $p_i$ such that $Rq_j \subset Rp_i$ by their definition, and hence there are surjections $Rp_1 \rightarrow Rq_j$ for all $j$. As $Rtp \cong Rp$ we now work with $tp$ rather than $p$ and $tp$ has the property that there are no $q_j$ with $np_1=q_j$ for a monomial $n$.

We claim that $Rtp \cong Rp_1$, hence $Rp \cong Rp_1$. As there are surjections $Rp_1 \rightarrow Rq_j$ there are surjections $Rp_1 \to Rq_j$, $p_1 \mapsto q_j$ by Lemma \ref{L:StandardEpi}. Let $\varphi_j$ be the composition of such a surjection with the canonical inclusion $Rq_j \to R$ and let $\iota\colon Rp_1 \to R$ be the canonical inclusion. Define $\psi\colon Rp_1 \to R$ by $\psi=\iota +\sum_{j=2}^{t'} \mu_j \varphi_j$. Then $\psi(p_1)= p_1 +\sum_{j=2}^{t'} \mu_j q_j= tp$ so $\im \psi = Rtp$. Hence $\psi$ defines a surjective morphism $\phi:Rp_1 \rightarrow Rtp$.

We must now check that this morphism is also injective. If $\psi(rp_1)=0$, then $rp_1 + r\sum_{j=2}^{t'} \mu_j q_j =0 $. As $R$ is monomial if $rp_1$ is non-zero there must exist monomials $n,m \in R$ such that $np_1=mq_j$ for some $j$, and if this occurs either $p_1 = m'q_j$ or $n'p_1=q_j$ for submonomials $m'$ and $n'$ neither equal $e$. The first case cannot occur as this implies $R p_1 \ne \subset Rq_j$ which contradicts the existence of a surjection $Rp_1 \rightarrow Rq_j$. The second situation also cannot occur as the construction of the $q_j$ above ensured none were of this form. Hence $rp_1=0$ so the morphism is also injective and $Rp_1 \cong Rtp \cong Rp$.
\end{proof}

\end{document}